\numberwithin{equation}{section}
\newtheorem{theo}{Theorem}[section]
\newtheorem{pro}[theo]{Proposition}
\newtheorem{lem}[theo]{Lemma}
\newtheorem{cor}[theo]{Corollary}
\theoremstyle{remark}
\newtheorem{rem}[theo]{Remark}
\newtheorem*{ack}{Acknowledgments}
\renewcommand{\(}{\left(}
\renewcommand{\)}{\right)}
\newcommand{\R}{\mathbb{R}}
\renewcommand{\H}{\mathbb{H}}
\renewcommand{\a}{\alpha}
\renewcommand{\b}{\beta}
\newcommand{\g}{\gamma}
\renewcommand{\d}{\delta}
\renewcommand{\k}{\kappa}
\newcommand{\D}{\Delta}
\newcommand{\ra}{\rightarrow}
\newcommand{\mrm}{\mathrm}
\begin{document}

\title[Contraction of surfaces]{Contraction of surfaces in hyperbolic space and in sphere}
\author[Y. Hu]{Yingxiang Hu}
\address{School of Mathematical Sciences, Beihang University, Beijing 100191, P.R. China}
\email{huyingxiang@buaa.edu.cn}
\author[H. Li]{Haizhong Li}
\address{Department of Mathematical Sciences, Tsinghua University, Beijing 100084, P.R. China}
\email{lihz@tsinghua.edu.cn}
\author[Y. Wei]{Yong Wei}
\author[T. Zhou]{Tailong Zhou}
\address{School of Mathematical Sciences, University of Science and Technology of China, Hefei 230026, P.R. China}
\email{yongwei@ustc.edu.cn}
\email{ztl20@ustc.edu.cn}

\subjclass[2010]{53C44; 53C21}
\keywords{Contracting curvature flow, hyperbolic space, positive scalar curvature, porous medium equation}

\begin{abstract}
In this paper, we consider the contracting curvature flows of smooth closed surfaces in $3$-dimensional hyperbolic space and in $3$-dimensional sphere. In the hyperbolic case, we show that if the initial surface $M_0$ has positive scalar curvature, then along the flow by a positive power $\alpha$ of the mean curvature $H$, the evolving surface $M_t$ has positive scalar curvature for $t>0$. By assuming $\alpha\in [1,4]$, we can further prove that $M_t$ contracts a point in finite time and  become spherical as the final time is approached. We also show the same conclusion for the flows by powers of scalar curvature and by powers of Gauss curvature provided that the power $\a\in [1/2,1]$.

In the sphere case, we show that the flow by a positive power $\alpha$ of mean curvature contracts strictly convex surface in $\mathbb{S}^3$ to a round point in finite time if $\alpha\in [1,5]$. The same conclusion also holds for the flow by powers of Gauss curvature provided that the power $\a\in [1/2,1]$.
\end{abstract}

\maketitle

\section{Introduction}\label{sec:1}
Let $\R^3(c)$ ($c=0,1,-1$) be the standard model of simply connected space form, i.e., when $c=0$, $\R^3(0)=\R^3$, when $c=1$, $\R^3(1)=\mathbb{S}^3$ and when $c=-1$, $\R^3(-1)=\H^3$. Let $M_0$ be a smooth closed surface in $\R^3(c)$, given by a smooth immersion $X_0: M^2\to\R^3(c)$. We consider the contracting curvature flow of closed surfaces starting at $M_0$ in $\R^3(c)$, which is a family of smooth immersions $X:M^2\times [0,T)\to \R^3(c)$ satisfying
\begin{align}\label{1.1}
\left\{\begin{aligned}\frac{\partial}{\partial t} X(x,t)=&-F(x,t) \nu(x,t),\\
X(x,0)=&X_0(x),
\end{aligned}\right.
\end{align}
where $F$ is a smooth, symmetric function of the principal curvatures $\kappa=(\kappa_1,\kappa_2)$ of the evolving surface $M_t=X(M,t)$, and $\nu$ is the outward unit normal of $M_t$.

\subsection{Background}
There are many papers which consider the contraction of hypersurfaces in Euclidean space $\R^{n+1}$ under the flow \eqref{1.1}. In his foundational work \cite{Huisken1984}, Huisken proved that any compact strictly convex hypersurface in Euclidean space, evolving by the mean curvature flow (i.e., $F$ is given by the mean curvature $H$), will become spherical as it shrinks to a point. Later, Chow \cite{Chow1985} proved the same conclusion for flow \eqref{1.1} with speed $F$ given by $n$th root of the Gauss curvature $K$. He also
proved a result for flow by the square root of the scalar curvature \cite{Chow1987}, but in that case a stronger assumption than convexity was required for the initial hypersurface. These results have been generalized by Andrews \cite{And94-a,Andrews2007,Andrews2010} to a large class of speed functions $F$ which are homogeneous of degree one of the principal curvatures, and satisfy certain natural concavity conditions.

For speed function $F$ with higher homogeneity, the analysis of the flow \eqref{1.1} becomes much more difficult. In this direction, several works have treated such flows in a special case of surfaces in $3$-dimensional Euclidean space $\mathbb{R}^3$:  Andrews \cite{Andrews1999} proved that any strictly convex surface in $\mathbb{R}^3$ will shrink to a round point along the Gauss curvature flow (i.e., the flow \eqref{1.1} with $F=K$), which affirmatively resolves the famous Firey's conjecture. The key step in the proof is a curvature pinching estimate, which says that the ratio of the largest principal curvture to the smallest one can be controlled by its initial value. This was proved by applying the maximum principle to a suitably chosen function of the principal curvatures $\k_1,\k_2$ of $M_t$. This idea has been explored further in the works \cite{Andrews-Chen2012,Schnurer2005,Schulze2006}.  In particular, Schulze and Schn\"urer\cite{Schulze2006} proved that the flow by powers of mean curvature in $\R^3$ (i.e., $F=H^{\alpha}$) constracts convex surface to a round point provided that $\alpha\in [1,5]$; Andrews and Chen \cite{Andrews-Chen2012} proved that the flow by powers of Gauss curvature in $\R^3$ (i.e., $F=K^{\alpha}$) contracts convex surface to a round point provided that $\a\in [1/2,1]$. In both papers \cite{Andrews-Chen2012,Schulze2006}, the authors considered the following quantity
\begin{equation}\label{s1:G1}
  G(x,t)=\frac{(\k_1-\k_2)^2}{\k_1^2\k_2^2}F^{2}
\end{equation}
and showed that the spatial maximum of $G$ is monotone non-increasing along the flow for certain range of the power $\a$, which allows the authors to prove the crucial curvature pinching estimate. In the higher dimensional case, the flow by powers of Gauss curvature has been well studied. The complete picture for this flow has been captured by the combined works \cite{AGN16,Brendle-Choi-Daskalopoulps2017,GN17}. For general curvature flows with high powers homogeneous speed functions in Euclidean space, in order to show that closed convex hypersurface shrinks to a round point, a strong curvature pinching condition on the initial hypersurface is always needed, see \cite{Alessandroni-Sinestrari2010,AM12,Schulze2006}.

In the hyperbolic space, the understanding of the flow \eqref{1.1} is less complete. Huisken \cite{Huisken1986} proved that the mean curvature flow contracts compact hypersurface with principal curvatures satisfying $\kappa_iH>n$ ($\forall~i=1,\cdots,n$) in hyperbolic space $\mathbb{H}^{n+1}$ to a round point. Andrews \cite{Andrews1994} considered a large class of  fully nonlinear flows which doesn't include the mean curvature flow. It is shown that any initial compact hypersurface  in hyperbolic space which is horospherically convex (i.e., $\kappa_i>1$ for all $i=1,\cdots,n$) can be deformed to a round point along the flow. A typical example included in \cite{Andrews1994} is the flow by shifted harmonic mean curvature $F=(\sum_{i=1}^n(\kappa_i-1)^{-1})^{-1}$. In \cite{Yu16}, Yu studied the contracting flows in hyperbolic space for a general class of homogeneous of degree one speed functions, using a similar argument as in Gerhardt \cite{Ger15} for curvature flows in the sphere. Recently, Andrews and Chen \cite{Andrews-Chen2017} proved the smooth convergence of the mean curvature flow for hypersurfaces with positive Ricci curvature in hyperbolic space. In the special case of surfaces in $3$-dimensional hyperbolic space $\H^3$, they also studied the behavior of the flow \eqref{1.1} for surfaces with positive intrinsic scalar curvature $R=2(K-1)>0$. In particular, they proved that the mean curvature flow and scalar curvature flow (i.e., $F=K-1=R/2$) preserve the condition $R>0$ and evolve any closed surface with positive scalar curvature to a round point in finite time.
The negative curvature of the ambient space $\H^3$ produces terms in the evolution of the second fundamental form, which prevent the estimates in Euclidean setting from being applied in hyperbolic setting.  To overcome this difficulty,  different improving quantities have been used in \cite{Andrews-Chen2017} to obtain the curvature pinching estimate.

For the curvature flows in the sphere, Huisken \cite{Hui87} proved that for any initial hypersurface which satisfies a pointwise pinching condition, the mean curvature flow will contract the hypersurface to a point in finite time, or evolve for all time to a smooth totally geodesic hypersurface. Gerhardt \cite{Ger15} proved that any strictly convex hypersurface will be contracted to a round point in finite time along the flow in sphere if the speed function $F$ is concave and inverse concave with respect to the principal curvatures. In the special case of surfaces in $3$-dimensional sphere $\mathbb{S}^3$, Andrews \cite{And2002} optimised the choice of the fully nonlinear speed function to show that any surfaces with positive intrinsic curvature in sphere $\mathbb{S}^3$ can be deformed to either a round point in finite time, or to the great sphere in infinite time. McCoy \cite{McCoy2017} recently proved that the flow by any homogeneous of degree one speed function or by Gauss curvature can evolve strictly convex surfaces to a round point in finite time.


\subsection{Main results}
In this paper, we focus on the contracting curvature flow of surfaces in $3$-dimensional hyperbolic space $\mathbb{H}^3$ and in $3$-dimensional sphere $\mathbb{S}^3$. In the hyperbolic case, we assume that the initial surface has positive intrinsic scalar curvature. We will study the contraction of such surfaces along the flow \eqref{1.1} in $\mathbb{H}^3$ by powers of mean curvature, powers of scalar curvature and powers of Gauss curvature. Our first result states as follows:
\begin{theo}\label{main-theo-I}
Let $X_0:M^2 \ra \H^3$ be a smooth closed surface with positive scalar curvature in $\H^3$. Assume that either
\begin{itemize}
  \item[(i)] $F=H^{\a}$ with $\a \in [1,4]$ ; or
  \item[(ii)] $F=(K-1)^{\a}$ with $\a\in [1/2,1]$; or
  \item[(iii)] $F=K^{\a}$ with $\a\in [1/2,1]$.
\end{itemize}
Then there exists a unique solution $X: M^2 \times [0,T) \ra \H^3$ of the flow \eqref{1.1} on a maximum time interval $[0,T)$, where $T<\infty$. The surface $M_t$ has positive scalar curvature for each $t\in [0,T)$, and converges smoothly to a point $p\in \H^3$ as $t\ra T$. The solutions are asymptotic to a shrinking sphere as $t\ra T$ in the following sense: Let $\Theta(t,T)$ be the spherical solution of the flow with the same existence time $T$. Introducing geodesic polar coordinate system with respect to the point $p$, and writing the evolving surfaces $M_t$ as graphs of a function $u(x,t)$ on $\mathbb{S}^2$, then the rescaled function $u\Theta^{-1}$ is uniformly bounded and converges to $1$ in $C^{\infty}(\mathbb{S}^2)$ as $t\to T$.
\end{theo}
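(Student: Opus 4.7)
The plan is to follow the by-now standard strategy for contracting curvature flows with high-power speed: short-time existence, preservation of the cone $\{R>0\}$, an interior curvature pinching estimate, and finally smooth rescaled convergence to a round point. Short-time existence follows from standard parabolic theory once one notes that on the cone $\{\kappa_1\kappa_2>1\}$ each of the three speeds $F$ is a positive smooth symmetric function of $(\kappa_1,\kappa_2)$ with $\partial F/\partial\kappa_i>0$. The first substantial step is to show that $R=2(K-1)>0$ is preserved. Computing $(\partial_t-\dot F^{ij}\nabla_i\nabla_j)(K-1)$ along \eqref{1.1}, one obtains a parabolic inequality whose zero-order part picks up a \emph{favorable} contribution from the negative sectional curvature of $\H^3$; the maximum principle then yields $K-1>0$ on $[0,T)$ and in fact an improved positive lower bound depending only on the initial data.

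The heart of the proof is a curvature pinching estimate. Following \cite{Andrews-Chen2012, Andrews-Chen2017, Schulze2006} I would work with an auxiliary quantity of the form
\begin{equation*}
G \;=\; \frac{(\kappa_1-\kappa_2)^{2}}{g(\kappa_1,\kappa_2)^{p}}\,F^{\,q},
\end{equation*}
where $g$ is a positive symmetric function vanishing on the umbilic locus and adapted to the positive-scalar-curvature cone (natural candidates are powers of $H$, of $K-1$, or suitable products thereof), and the exponents $p,q$ are tuned so as to compensate both the scaling of $F$ and the inhomogeneous terms arising from commuting covariant derivatives in $\H^3$. The operator $(\partial_t-\dot F^{ij}\nabla_i\nabla_j)G$ splits into gradient terms and a zero-order part; at an interior spatial maximum the gradient terms can be absorbed using the Codazzi identity together with the condition $\nabla G=0$, leaving a zero-order term whose non-positivity has to be verified by a direct algebraic computation in $(\kappa_1,\kappa_2)$. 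I expect this algebraic verification to be the main technical obstacle: for each of the three choices of $F$ it should reduce to a polynomial inequality whose validity pins down the admissible range of $\alpha$, and the specific endpoints $\alpha=4$ and $\alpha=1$ in the statement strongly suggest that one of the bounds is sharp for this method. The outcome is $\max_{M_t}G\leq\max_{M_0}G$ for $t\in[0,T)$, which combined with the lower bound on $K-1$ controls $\kappa_1/\kappa_2$ uniformly in terms of the initial data.

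With the pinching estimate in hand, the remainder of the proof proceeds along standard lines. The bound on $\kappa_1/\kappa_2$ forces the two principal curvatures to blow up simultaneously, so $F$ is bounded below by a positive multiple of $\max\kappa$. Comparison with an enclosing shrinking geodesic sphere gives $T<\infty$ and shows that the inradius shrinks to $0$, while the pinching forces the outradius to shrink to $0$ at the same rate, so $M_t$ contracts to a single point $p\in\H^3$. Krylov--Safonov estimates applied to the linearised equation, together with a standard bootstrap, then produce uniform $C^\infty$ bounds on the rescaled graph function $u\,\Theta^{-1}$ in geodesic polar coordinates centred at $p$. Convergence of $u\,\Theta^{-1}$ to $1$ in $C^\infty(\mathbb{S}^2)$ is obtained, as in \cite{Andrews-Chen2017}, by applying the maximum principle to the oscillation of $u\,\Theta^{-1}$: the pinching forces any subsequential limit of the rescaled surfaces to be totally umbilic, hence a round sphere, and uniqueness of the self-similar spherical solution with prescribed existence time $T$ identifies the limit with $\Theta$ itself.
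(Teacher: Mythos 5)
Your sketch correctly identifies the overall architecture: preservation of $K>1$ by the maximum principle, a pinching estimate via an auxiliary function built from $(\kappa_1-\kappa_2)^2$ and $F$, and then contraction to a round point. The specific test function the paper uses is $G=\frac{(\kappa_1-\kappa_2)^2}{(\kappa_1\kappa_2-1)^2}F^2$, which does fall inside the family you propose; a feature worth noting, because it drives the whole argument, is that for this specific choice the genuine zero-order term in the evolution of $G$ \emph{vanishes identically} in $\H^3$ (Proposition~\ref{pro-2.1}), so the only thing to check at a spatial maximum is that the coefficients multiplying the squared-gradient quantities are non-positive; the burden is entirely in the gradient terms, not in a residual zero-order term as your description suggests.

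The real gap is in your final paragraph. You invoke ``Krylov--Safonov estimates applied to the linearised equation, together with a standard bootstrap'' to obtain uniform $C^\infty$ bounds on the rescaled graph $\tilde u = u\Theta^{-1}$. For $\alpha>1$ this does not work as stated: the linearised operator has $\dot F^{ij}=\alpha H^{\alpha-1}g^{ij}$, so after rescaling the second-order coefficient is proportional to $\tilde H^{\alpha-1}$, and there is no a priori positive lower bound on the rescaled mean curvature $\tilde H=\Theta H$. Where $\tilde H$ is small the equation degenerates, Krylov--Safonov fails, and the compactness you need to extract a totally umbilic subsequential limit is not available; the argument as you wrote it is circular, since the lower bound on $\tilde H$ is exactly what the regularity would be used to prove. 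The paper circumvents this by first proving an upper bound $\tilde H\le C$ via Tso's trick, then rewriting the evolution of $\tilde H$ on $\mathbb S^2$ as a porous-medium-type equation $\partial_\tau\tilde H=\bar\nabla_i(a^{ij}\bar\nabla_j\tilde H^\alpha)+b^i\partial_i\tilde H^\alpha+c\tilde H$ with uniformly elliptic $a^{ij}$, and invoking the H\"older continuity theorem of DiBenedetto--Friedman for such equations. Combined with a pointwise lower bound $\tilde H\geq 2C>0$ at the maximum of $\tilde u$, the H\"older estimate propagates a uniform lower bound to a neighbourhood, where the rescaled graph equation becomes uniformly parabolic; higher regularity and the decay $\kappa_1/\kappa_2\le 1+C\Theta^{\alpha-1}$ then allow an iteration that extends the region of convergence to all of $\mathbb S^2$. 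Without this step (or some substitute for it), your proof of smooth convergence does not go through for $\alpha\in(1,4]$.
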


In the sphere case, we assume that the initial surface is strictly convex in $\mathbb{S}^3$. Then it lies strictly in a hemisphere of $\mathbb{S}^3$. We consider its contraction along the flow \eqref{1.1} in $\mathbb{S}^3$ by powers of mean curvature and powers of Gauss curvature.
\begin{theo}\label{main-theo-II}
Let $X_0:M^2 \ra \mathbb{S}^3$ be a smooth, closed and strictly convex surface in $\mathbb{S}^3$. Assume that either
\begin{itemize}
  \item[(i)] $F=H^{\a}$ with $\a \in [1,5]$ ; or
  \item[(ii)] $F=K^{\a}$ with $\a\in [1/2,1]$.
\end{itemize}
Then there exists a unique solution $X: M^2 \times [0,T) \ra \mathbb{S}^3$ of the flow \eqref{1.1} on a maximum time interval $[0,T)$, where $T<\infty$. The surface $M_t$ is strictly convex for each $t\in [0,T)$, and converges smoothly to a point $p\in \mathbb{S}^3$ as $t\ra T$. The solutions are asymptotic to a shrinking sphere as $t\ra T$ in the same sense as Theorem \ref{main-theo-I}.
\end{theo}

\subsection{Discussion on the proof}

The proof of our theorems mainly relies on the crucial curvature pinching estimate. In the hyperbolic case, instead of using \eqref{s1:G1} we consider the following quantity
\begin{align}\label{1.2}
G(x,t):=\frac{(\k_1-\k_2)^2}{(\k_1\k_2-1)^2}F^{2}
\end{align}
on the evolving surfaces $M_t$ (see \cite{Andrews-Chen2017} for the special case where $F=K-1$). Since the initial surface satisfies $K=\kappa_1\kappa_2>1$, we apply maximum principle to show that this condition is preserved, and thus the function $G$ is well-defined on $M_t$ for all $t\in [0,T)$. We will show that the spatial maximum of $G$ is monotone non-increasing along the flow \eqref{1.1} with speeds listed in Theorem \ref{main-theo-I}. The proof is by applying maximum principle to the evolution equation of $G$. The main advantage of this quantity $G$ is that there are no zero order terms in its evolution equation, provided that the speed function $F$ is a homogeneous function of the principal curvatures or $F$ is a power of the scalar curvature. Thus, the monotonicity of $G$ along the flow reduces to the non-positivity of the gradient terms at the critical points. This would be the most technique part in the proof: We will first derive in \S \ref{sec:2-3} a general formula for the gradient terms at the spatial critical point of $G$ for general speed function $F$. Then in \S \ref{sec:3} - \S \ref{sec:5}, we treat the three speed functions listed in Theorem \ref{main-theo-I} separately. By careful calculation, we eventually find the sufficient condition for the power $\alpha$ such that the gradient terms are non-positive, and therefore we conclude that along the flow the spatial maximum of $G$ would be monotone non-increasing in time. We should point out that the analysis of the gradient terms here is much more complicated due to the negative curvature of the ambient space $\mathbb{H}^3$. In fact, the coefficients in front of $(\nabla_1 h_{11})^2$ and $(\nabla_2 h_{22})^2$ are not homogeneous in $\k_1,\k_2$, which is quite different from the Euclidean case considered in \cite{Schulze2006}.

To prove the curvature pinching estimate in $\mathbb{S}^3$, we use the same test function $G$ defined in \eqref{s1:G1} as in Euclidean case \cite{Andrews-Chen2012,Schulze2006}. The evolution equation of $G$ has both the zero order terms and gradient terms. Since the function $G$ is the same one as in Euclidean case used in \cite{Andrews-Chen2012,Schulze2006}, the analysis of the gradient terms would be similar with the Euclidean case. We will apply the estimate of gradient terms in \cite{Schulze2006} directly here for the flow by powers of mean curvature in the sphere. However, the estimate of the gradient terms in \cite{Andrews-Chen2012} for the flow by powers of Gauss curvature is carried out using the Gauss map parametrization of the flow: The flow of strictly convex closed surfaces in Euclidean space $\mathbb{R}^3$ is equivalent to a scalar parabolic equation on the sphere $\mathbb{S}^2$ for the support function of the evolving surfaces. This parametrization is not available in the sphere case. Instead we prove our estimate using the calculation on the evolving surfaces directly. Moreover, due to the positive curvature of the ambient space $\mathbb{S}^3$ we have a good sign for the zero order terms. Thus the maximum principle can be applied to give the monotonicity of $G$. The details will be given in \S \ref{sec:S}.

In the last section, we describe the convergence of the solution to a point and of the rescaled solution to a sphere as stated in Theorem \ref{main-theo-I}. We only focus on the flow in $\mathbb{H}^3$ by powers of mean curvature, using the idea in the work by \cite{Schulze2006} and \cite{Andrews-Chen2017}.
The main difficulty is that for $\a> 1$ the parabolic operator involved in the evolution equation of the rescaled mean curvature $\tilde{H}$ has a coefficient $\alpha \tilde{H}^{\alpha-1}$ in the second order part and it becomes degenerate for $\tilde{H}$ sufficiently small. Thus we can not apply the parabolic Harnack inequality to get an estimate from below for $\tilde{H}$. To overcome this problem, we write the evolution equation satisfied by $\tilde{H}$ as a porous medium equation, and apply a result of DiBenedetto and Friedman \cite{DiBenedetto-Friedman1985} to get the H\"{o}lder continuity of $\tilde{H}$.
The proof for the remaining flows is similar.  See \S \ref{sec:6} for the details.

\begin{ack}
The authors would like to thank Professor Ben Andrews for suggestions on the curvature pinching in the hyperbolic case. The first author was
supported by China Postdoctoral Science Foundation (No.2018M641317). The second author was supported by NSFC grant No.11671224, 11831005 and NSFC-FWO grant No.1196131001. The third author was supported by Discovery Early Career Researcher Award DE190100147 of the Australian Research Council.
\end{ack}

\section{Evolution equations}\label{sec:2}
In this section, we collect some basic evolution equations along the flow \eqref{1.1}, and then derive the evolution equation of the quantity $G$ defined in \eqref{1.2}.

\subsection{Notations}
First, we fix the notations we will use in the paper.  We denote by $g=(g_{ij})$ and $h=(h_{ij})$ the induced metric and the second fundamental form of the surface $M_t$, respectively. Then the Weingarten tensor $\mathcal{W}=(h_i^j)=(g^{jk}h_{ik})$. The eigenvalues $\k_1,\k_2$ of $\mathcal{W}$ are called the principal curvatures of $M_t$. In the flow \eqref{1.1}, the speed function $F=F(\mathcal{W})$ is a smooth symmetric function of the Weingarten tensor $\mathcal{W}=(h_i^j)$ of the evolving surface $M_t=X_t(M)$, and $\nu$ is the outward unit normal of $M_t$. Equivalently, $F=F(\mathcal{W})=f(\kappa(\mathcal{W}))$, where $f$ is a smooth symmetric function of $2$-variables, and $\kappa(\mathcal{W})=(\k_1,\k_2)$ denotes the eigenvalues of $\mathcal{W}$.

The derivatives of $F$ with respect to the components of $\mathcal{W}=(h_i^j)$ and those of $f$ with respect to $\k_i$ are related in the following way (see e.g. \cite{Andrews2007}): Let $\dot{f}^i$ and $\ddot{f}^{ij}$ denote the derivatives of $f$ with respect to $\kappa_i$. If $A$ is diagonal and $B$ is a symmetric matrix, then the first derivative of $F$ is given by
\begin{align}\label{2.F-1st}
\dot{F}^{kl}(A)=\dot{f}^k(\k(A))\d^{kl};
\end{align}
and if $A$ has distinct eigenvalues, then the second derivatives of $F$ in direction $B$ is given by
\begin{align}\label{2.F-2nd}
\ddot{F}^{kl,rs}B_{kl} B_{rs}=\ddot{f}^{kl}(\k(A))B_{kk} B_{ll}+2\sum_{k<l} \frac{\dot{f}^k-\dot{f}^l}{\k_k-\k_l}B_{kl}^2.
\end{align}
The last term is interpreted as a limit if $\k_k=\k_l$. Since $F$ is symmetric, we can assume that at each point $(p,t)\in M \times [0,T)$, the principal curvatures satisfy $\k_1 \geq \k_2$.

\subsection{Evolution equations}

Along the flow \eqref{1.1} in $\R^3(c)$, the speed function $F$ satisfies the evolution equations (see \cite{Andrews1994}):
\begin{equation}\label{s2:speed}
  \frac{\partial}{\partial t}F=\dot{F}^{ij}\nabla_i\nabla_jF+F\dot{F}^{ij}((h^2)_{ij}+cg_{ij}),
\end{equation}
where $(h^2)_{ij}=h_i^rh_{rj}$.
For any smooth symmetric function $G=G(h_i^j)$ of the Weingarten tensor, we have (see \cite{Andrews-Chen2017})
\begin{align}\label{2.2}
   \frac{\partial}{\partial t}G=& \dot{F}^{ij}\nabla_i\nabla_jG+\left(\dot{G}^{ij}\ddot{F}^{kl,mn}-\dot{F}^{ij}\ddot{G}^{kl,mn}\right)\nabla_ih_{kl}\nabla_jh_{mn} \nonumber\\
   & \quad+(F-\dot{F}^{kl}h_{kl})\dot{G}^{ij}(h^2)_{ij}+\dot{F}^{kl}(h^2)_{kl}\dot{G}^{ij}h_{ij}\nonumber\\
   &\quad +c\left( (F+\dot{F}^{kl}h_{kl})\dot{G}^{ij}g_{ij}-\dot{F}^{kl}g_{kl}\dot{G}^{ij}h_{ij}\right).
\end{align}

The key step in the proof of our results is to obtain the curvature pinching estimate of the flow, i.e., we prove that the ratio of the largest principal curvature $\k_1$ to the smallest one $\k_2$ is controlled by its initial value. In hyperbolic space case, we will prove this estimate by applying maximum principle to the evolution equation of the function $G$ defined in  \eqref{1.2}, which is obviously a smooth symmetric function $G=G(\mathcal{W})$ of the Weingarten tensor $\mathcal{W}$:
\begin{equation*}
  G=G(\mathcal{W})=\frac{|h|^2-2K}{(K-1)^2}F^2.
\end{equation*}
As before we equivalently write $G=g(\kappa)$, where $g$ is a smooth symmetric function of the principal curvatures.  The following proposition shows that the function $G$ (defined in \eqref{1.2}) has the advantage that its evolution equation \eqref{2.2} along the flow \eqref{1.1} has no zero-order terms, provided that the speed function $F$ is homogeneous of the principal curvatures, or the powers of the scalar curvature.
\begin{pro}\label{pro-2.1}
Let $\a>0$. Let $M_t$ be a smooth solution to the flow \eqref{1.1} with positive scalar curvature in $\H^3$. Assume either (i). the speed function $F$ is a homogeneous function of the principal curvatures, or (ii). $F=(K-1)^{\alpha}$ is a power of the scalar curvature. Then the evolution of $G$ defined in \eqref{1.2} satisfies
\begin{align}\label{2.4}
\frac{\partial}{\partial t}G=& \dot{F}^{ij}\nabla_i \nabla_j G + \(\dot{G}^{ij}\ddot{F}^{kl,mn}-\dot{F}^{ij}\ddot{G}^{kl,mn}\)\nabla_i h_{kl} \nabla_j h_{mn}.
\end{align}
\end{pro}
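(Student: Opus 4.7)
The plan is to show that with $c=-1$ all the zero-order terms in \eqref{2.2} cancel identically. Fix a point and diagonalize $\mc W$ so that $h_i^j = \k_i\d_i^j$; set $q_m := \sum_i \dot{f}^i \k_i^m$ for $m=0,1,2$, and write $H=\k_1+\k_2$, $K=\k_1\k_2$. The key Vieta-type identity
\[ q_2 = Hq_1 - Kq_0 \]
follows instantly by multiplying the characteristic equation $\k_i^2 = H\k_i - K$ of $\mc W$ by $\dot{f}^i$ and summing; it holds for every symmetric $f$ and will drive the whole cancellation.

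Next I factor $G = PF^2$ with $P := (\k_1-\k_2)^2/(\k_1\k_2-1)^2$, so that in the diagonal frame $\dot G^{ii} = \dot P^i F^2 + 2PF\,\dot f^i$. The zero-order part $Z$ of \eqref{2.2} splits as $Z = F^2 Z_P + 2PF\,Z_f$, where $Z_f, Z_P$ are obtained from $Z$ by replacing $\dot G^{ii}$ with $\dot f^i$ and $\dot P^i$ respectively. A direct expansion of $Z_f$ shows that the cross products cancel in pairs, leaving the clean formula $Z_f = f(q_2-q_0)$. For $Z_P$, an explicit computation of $\dot P^1, \dot P^2$ from the definition of $P$ yields, with $Q := -2(\k_1-\k_2)^2/(K-1)^3 = -2P/(K-1)$,
\[ \sum_i \dot P^i = QH, \qquad \sum_i \dot P^i \k_i = Q(K+1), \qquad \sum_i \dot P^i \k_i^2 = QH. \]
The coincidence of the first and third sums is the crucial simplification: substituting into $Z_P$, combining with $2PF\,Z_f$, and simplifying produces
\[ Z = \frac{4PF^2}{K-1}\bigl(q_1 H - q_2 - Kq_0\bigr), \]
which vanishes by the Vieta identity, proving \eqref{2.4}.

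The main computational obstacle is the bookkeeping for the three $\dot P^i$ symmetric sums and noticing the coincidence $\sum\dot P^i\k_i^2 = \sum\dot P^i$; once it is in hand, the remaining algebra is mechanical. It is worth noting that this argument invokes neither homogeneity of $F$ nor the specific form $F=(K-1)^\a$, and goes through for any smooth symmetric $f(\k_1,\k_2)$; the two cases listed in the proposition are simply those needed to cover the three speed functions in Theorem \ref{main-theo-I}.
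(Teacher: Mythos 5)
Your proposal is correct, and I verified the calculation line by line: the Vieta identity $q_2=Hq_1-Kq_0$ holds for any symmetric $f$ since $\k_1,\k_2$ are the roots of $t^2-Ht+K=0$; the reduction $Z_f=f(q_2-q_0)$ follows by pairwise cancellation of the cross terms; the three symmetric sums
\begin{equation*}
\sum_i\dot P^i=QH,\qquad \sum_i\dot P^i\k_i=Q(K+1),\qquad \sum_i\dot P^i\k_i^2=QH,\qquad Q=-\frac{2P}{K-1},
\end{equation*}
come out exactly as you state; and assembling $Z=F^2Z_P+2PF\,Z_f$ gives $Z=\frac{4PF^2}{K-1}(Hq_1-q_2-Kq_0)=0$.

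This is a genuinely different route from the paper's. The paper splits into cases: for (i) it invokes the Euler relation $q_1=\alpha f$ repeatedly, substitutes expressions \eqref{2.5}--\eqref{2.5-b} for the weighted sums of $\dot g^i$, and collapses the result by a somewhat opaque algebraic manipulation; for (ii) it says only that ``substituting these equations $\ldots$ we also have $Q_0=0$'', leaving the reader to repeat the exercise. Your argument instead factors $G=PF^2$, isolates the $P$-part from the $F$-part, and reduces everything to a single universal identity — the Vieta relation — together with the numerical coincidence $p_0=p_2$, which is the real mechanism making the choice $P=(\k_1-\k_2)^2/(K-1)^2$ work in $\mathbb H^3$. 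This has two advantages: it is a unified computation rather than two case analyses, and it reveals that the hypotheses (i)/(ii) are superfluous — the proposition actually holds for any smooth symmetric speed $F$. You correctly noted this. The one trade-off is that the paper's route makes the role of $\alpha$ visible in the intermediate steps, which mildly foreshadows the later analysis in \S3--\S5 where $\alpha$ does matter for the gradient terms; but for the proposition itself your proof is cleaner, more transparent, and strictly more general.
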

\begin{proof}
Since the function $G$ in \eqref{1.2} is a smooth symmetric function of $\mathcal{W}$, we have the evolution equation \eqref{2.2} for $G$ along the flow \eqref{1.1}. Write
\begin{equation*}
 F=f(\k),\qquad G=g(\k)=\frac{(\k_1-\k_2)^2}{(\k_1\k_2-1)^2}f^2(\k).
\end{equation*}
We have
\begin{align}
\dot{g}^1=&\frac{2(\k_1-\k_2)f}{\k_1\k_2-1}\left( \frac{(\k_1-\k_2)\dot{f}^1 }{\k_1\k_2-1}+\frac{(\k_2^2-1)f}{(\k_1\k_2-1)^2}\right),\label{2.9}\\
\dot{g}^2=&\frac{2(\k_1-\k_2)f}{\k_1\k_2-1}\left( \frac{(\k_1-\k_2)\dot{f}^2 }{\k_1\k_2-1}+\frac{(1-\k_1^2)f}{(\k_1\k_2-1)^2}\right).\label{2.9-b}
\end{align}
A direct calculation gives
\begin{align}
\dot{g}^1+\dot{g}^2=&2\frac{g}{f}(\dot{f}^1+\dot{f}^2)-\frac{2g(\k_1+\k_2)}{\k_1\k_2-1},\label{2.5}\\
\dot{g}^1\k_1+\dot{g}^2\k_2=&2(\a+1)g-\frac{4g\k_1\k_2}{\k_1\k_2-1},\\
\dot{g}^1\k_1^2+\dot{g}^2\k_2^2=&2\frac{g}{f}(\dot{f}^1\k_1^2+\dot{f}^2\k_2^2)-\frac{2g(\k_1+\k_2)}{\k_1\k_2-1}.\label{2.5-b}
\end{align}

(i). If $F$ is a homogeneous of degree $\a$ function of the principal curvatures, we have the Euler relation $\dot{f}^1\k_1+\dot{f}^2\k_2=\a f$. Using \eqref{2.F-1st}, the zero-order term of \eqref{2.2} for $G$ can be computed as follows:
\begin{align}\label{2.6}
Q_0
   =&(1-\a)(\dot{g}^{1}\k_1^2+\dot{g}^{2}\k_2^2)f+(\dot{g}^1\k_1+\dot{g}^2\k_2)(\dot{f}^1\k_1^2+\dot{f}^2\k_2^2)\nonumber\\
&\quad -(\a+1)(\dot{g}^{1}+\dot{g}^2)f+(\dot{g}^{1}\k_1+\dot{g}^2\k_2)(\dot{f}^1+\dot{f}^2).
\end{align}
Substituting the equations \eqref{2.5} -- \eqref{2.5-b} into \eqref{2.6}, we get $Q_0=0$.

(ii). If $F=(K-1)^{\alpha}$, the first order derivatives of $F$ and $G$ are given by:
\begin{align*}
  \dot{f}^1 =& \alpha(K-1)^{\alpha-1}\kappa_2,\quad  \dot{f}^2 =\alpha(K-1)^{\alpha-1}\kappa_1\\
  \dot{g}^1= & 2(\kappa_1-\kappa_2)(K-1)^{2(\alpha-1)}+2(\alpha-1)(\kappa_1-\kappa_2)^2(K-1)^{2\alpha-3}\kappa_2\\
  \dot{g}^2= & -2(\kappa_1-\kappa_2)(K-1)^{2(\alpha-1)}+2(\alpha-1)(\kappa_1-\kappa_2)^2(K-1)^{2\alpha-3}\kappa_1.
\end{align*}
Substituting these equations into the zero order term $Q_0$ of \eqref{2.2}, we also have $Q_0=0$.
\end{proof}

In the sphere case, we will prove the curvature pinching estimate by choosing the test function $G$ as defined in \eqref{s1:G1}. The evolution equation for $G$ has both the zero order terms and gradient terms. Since the function $G$  in \eqref{s1:G1} is the same one as in Euclidean case used in \cite{Andrews-Chen2012,Schulze2006}, the analysis of the gradient terms would be similar as the Euclidean case. We will describe this in details in \S \ref{sec:S}.

\subsection{Computation of the gradient terms}\label{sec:2-3}
Now we calculate the gradient term of \eqref{2.4} explicitly. Suppose $p$ is a point in $M$ where a new spatial maximum of $G$ is attained at time $t\in [0,T)$. Choose local orthonormal coordinates for $M$ near $p$ such that $h_{ij}(p,t)=\mrm{diag}(\k_1,\k_2)$. By \eqref{2.F-1st} and \eqref{2.F-2nd}, the gradient term on the RHS of \eqref{2.4} can be expressed as follows:
\begin{align}\label{2.7}
Q_1=&\(\dot{G}^{ij}\ddot{F}^{kl,mn}-\dot{F}^{ij}\ddot{G}^{kl,mn}\)\nabla_i h_{kl}\nabla_j h_{mn}\nonumber\\
=&\(\dot{g}^1\ddot{f}^{11}-\dot{f}^{1}\ddot{g}^{11}\)(\nabla_1 h_{11})^2+\(\dot{g}^1\ddot{f}^{22}-\dot{f}^{1}\ddot{g}^{22}\)(\nabla_1 h_{22})^2 \nonumber\\
&+2\(\dot{g}^1\ddot{f}^{12}-\dot{f}^{1}\ddot{g}^{12}\)\nabla_1 h_{11}\nabla_1 h_{22} \nonumber\\
&+\(\dot{g}^{2}\ddot{f}^{11}-\dot{f}^{2}\ddot{g}^{11}\)(\nabla_2 h_{11})^2+\(\dot{g}^{2}\ddot{f}^{22}-\dot{f}^{2}\ddot{g}^{22}\)(\nabla_2 h_{22})^2 \nonumber\\
&+2\(\dot{g}^{2}\ddot{f}^{12}-\dot{f}^{2}\ddot{g}^{12}\)\nabla_2 h_{11}\nabla_2 h_{22}\nonumber\\
&+2\frac{\dot{g}^1\dot{f}^{2}-\dot{g}^{2}\dot{f}^{1}}{\k_2-\k_1}(\nabla_1 h_{12})^2+2\frac{\dot{g}^1\dot{f}^{2}-\dot{g}^{2}\dot{f}^{1}}{\k_2-\k_1}(\nabla_2 h_{12})^2.
\end{align}
Without loss of generality, we assume that  $G$ is nonzero at $(p,t)$ (otherwise $M_t$ is a sphere and the proof is trivial). We also assume that $\k_1>\k_2$ because the maximum point of $G$ is not umbilical and both $F$ and $G$ are smooth and symmetric.

At the spatial maximum point of $G$, the gradient conditions $\nabla_i G=0$ give two equations:
\begin{align}\label{2.8}
\dot{g}^1\nabla_1 h_{11}+\dot{g}^2\nabla_1 h_{22}=0, \quad \dot{g}^1\nabla_2 h_{11}+\dot{g}^2\nabla_2 h_{22}=0.
\end{align}
For simplicity, we denote
\begin{align}
\b=&(\k_1-\k_2)(\k_1\k_2-1)\dot{f}^1+(\k_2^2-1)f,\label{2.10}\\
\g=&(\k_1-\k_2)(\k_1\k_2-1)\dot{f}^2-(\k_1^2-1)f.\label{2.10-b}
\end{align}
Then
\begin{equation}\label{s2:2-17}
  \dot{g}^1=\frac{2(\kappa_1-\kappa_2)f}{(\kappa_1\kappa_2-1)^3}\beta,\qquad \dot{g}^2=\frac{2(\kappa_1-\kappa_2)f}{(\kappa_1\kappa_2-1)^3}\gamma.
\end{equation}
Assume that at least one of $\dot{g}^1$ and $\dot{g}^2$ does not vanish. As $M_t$ is a family of surfaces with positive scalar curvature, i.e., $\k_1\k_2>1$, we have
\begin{enumerate}[(i)]
\item If $\dot{g}^1 \neq 0$, then $\b\neq 0$ and $\dfrac{\g}{\b}=\dfrac{\dot{g}^2}{\dot{g}^1}$;
\item If $\dot{g}^2 \neq 0$, then $\g\neq 0$ and $\dfrac{\b}{\g}=\dfrac{\dot{g}^1}{\dot{g}^2}$.
\end{enumerate}
Now we define
\begin{equation}\label{2.11}
\begin{split}
T_1^2:=\left\{
\begin{aligned}&\frac{(\nabla_1 h_{11})^2}{\g^2}, \quad\text{if $\g \neq 0$};\\
&\frac{(\nabla_1 h_{22})^2}{\b^2}, \quad\text{if $\b \neq 0$};
\end{aligned}\right. \quad \quad
T_2^2:=\left\{
\begin{aligned}&\frac{(\nabla_1 h_{22})^2}{\g^2}, \quad\text{if $\g \neq 0$};\\
&\frac{(\nabla_2 h_{22})^2}{\b^2}, \quad\text{if $\b \neq 0$};
\end{aligned}\right.
\end{split}
\end{equation}
Substituting \eqref{2.8} into \eqref{2.7} and using Codazzi equation, with the notation \eqref{2.11} we obtain
\begin{align}\label{2.13}
Q_1=& \biggl( (\dot{g}^1\ddot{f}^{11}-\dot{f}^1\ddot{g}^{11})\g^2-2(\dot{g}^1\ddot{f}^{12}-\dot{f}^1\ddot{g}^{12})\b\g \nonumber\\
 &\quad +\big(\dot{g}^1\ddot{f}^{22}-\dot{f}^1\ddot{g}^{22}+2\frac{\dot{g}^1\dot{f}^2-\dot{g}^2\dot{f}^1}{\k_2-\k_1}\big)\b^2\biggr)T_1^2\nonumber\\
 &+\biggl( (\dot{g}^2\ddot{f}^{22}-\dot{f}^2\ddot{g}^{22})\b^2 -2(\dot{g}^2\ddot{f}^{12}-\dot{f}^2\ddot{g}^{12})\b\g \nonumber\\
 &\quad +\big(\dot{g}^2\ddot{f}^{11}-\dot{f}^2\ddot{g}^{11}+2\frac{\dot{g}^1\dot{f}^2-\dot{g}^2\dot{f}^1}{\k_2-\k_1}\big)\g^2 \biggr)T_2^2.
\end{align}
We only calculate the coefficient (denoted by $\mathcal{Z}$) of $T_1^2$, since the coefficient of $T_2^2$ is similar, just with $\k_1$ and $\k_2$ interchanged.
Taking the further derivatives to \eqref{2.9} and \eqref{2.9-b} 
and by a direct calculation,  we get
\begin{align}\label{2.16}
\frac{\dot{g}^1\dot{f}^2-\dot{g}^2\dot{f}^1}{\k_2-\k_1}=-\frac{2 f^2\left[ (\k_2^2-1)\dot{f}^2+(\k_1^2-1)\dot{f}^1 \right]}{(\k_1\k_2-1)^3},
\end{align}
and
\begin{align}
\dot{g}^1\ddot{f}^{11}-\dot{f}^1\ddot{g}^{11}=&-\frac{8(\k_1-\k_2)(\k_2^2-1)f(\dot{f}^1)^2}{(\k_1\k_2-1)^3}-\frac{2(\k_1-\k_2)^2(\dot{f}^1)^3}{(\k_1\k_2-1)^2}\nonumber\\
                                              &-\frac{2(\k_2^2-1)f^2\dot{f}^1}{(\k_1\k_2-1)^3}+\frac{6\k_2(\k_1-\k_2)(\k_2^2-1)f^2\dot{f}^1}{(\k_1\k_2-1)^4} \nonumber \\
                                              &+\frac{2(\k_1-\k_2)(\k_2^2-1)f^2\ddot{f}^{11}}{(\k_1\k_2-1)^3},\label{2.17-a}\\
\dot{g}^1\ddot{f}^{22}-\dot{f}^1\ddot{g}^{22}=&-\frac{8(\k_2-\k_1)(\k_1^2-1)f\dot{f}^1\dot{f}^2}{(\k_1\k_2-1)^3}-\frac{2(\k_1-\k_2)^2\dot{f}^1(\dot{f}^2)^2}{(\k_1\k_2-1)^2}\nonumber\\
                                              &-\frac{2(\k_1^2-1)f^2\dot{f}^1}{(\k_1\k_2-1)^3}+\frac{6\k_1(\k_2-\k_1)(\k_1^2-1)f^2\dot{f}^1}{(\k_1\k_2-1)^4}\nonumber\\
                                              &+\frac{2(\k_1-\k_2)(\k_2^2-1)f^2\ddot{f}^{22}}{(\k_1\k_2-1)^3},\label{2.17-b}\\
\dot{g}^1\ddot{f}^{12}-\dot{f}^1\ddot{g}^{12}=&\frac{4(\k_1-\k_2)f\dot{f}^1}{(\k_1\k_2-1)^3}\left(\dot{f}^1(\k_1^2-1)-\dot{f}^2(\k_2^2-1)\right)+\frac{2f^2\dot{f}^1}{(\k_1\k_2-1)^2}\nonumber\\
                                              &-\frac{6(\k_1-\k_2)^2f^2 \dot{f}^1}{(\k_1\k_2-1)^4}+\frac{2(\k_2^2-1)(\k_1-\k_2)f^2\ddot{f}^{12}}{(\k_1\k_2-1)^3}.\label{2.17-c}
\end{align}
Then it follows from \eqref{2.16} -- \eqref{2.17-c} that
\begin{align}\label{2.19}
\mathcal{Z}=&\frac{2(\k_1-\k_2)(\k_2^2-1)f^2}{(\k_1\k_2-1)^3}\(\ddot{f}^{11}\g^2+\ddot{f}^{22}\b^2 -2\ddot{f}^{12}\g \b \) \nonumber\\
&+\frac{4f^2\( (\k_1^2-1)\dot{f}^1+ (\k_2^2-1)\dot{f}^2\)}{\k_1\k_2-1}\biggl( -2(\k_1-\k_2)f \dot{f}^1\nonumber\\
&\quad\quad -\frac{(\k_2^2-1)^2f^2}{(\k_1\k_2-1)^2} +(\k_1-\k_2)^2\dot{f}^1\dot{f}^2\biggr)\nonumber\\
&+\frac{4(\k_1-\k_2)^2f^2 \dot{f}^1}{\k_1\k_2-1}\biggl( -\frac{2(\k_1-\k_2)(\k_2^2-1)f\dot{f}^2}{\k_1\k_2-1} \nonumber\\
& \quad\quad +\frac{(\k_1^2-1)(\k_2^2-1)f^2}{(\k_1\k_2-1)^2} -(\k_1-\k_2)^2\dot{f}^1\dot{f}^2 \biggr).
\end{align}


\section{Flow in $\mathbb{H}^3$ by powers of mean curvature}\label{sec:3}
In this section, we study the flow by powers of mean curvature in $\H^3$, i.e.,
\begin{align}\label{3.1}
\left\{\begin{aligned}\frac{\partial}{\partial t} X(x,t)=&-H^\a(x,t) \nu(x,t),\quad \alpha>0\\
X(x,0)=&X_0(x).
\end{aligned}\right.
\end{align}

We first prove that $K>1$ is preserved along the flow (\ref{3.1}) in $\H^3$.
\begin{pro}\label{pro-3.1}
Let $M_t$, $t\in [0,T)$ be a smooth solution to the flow \eqref{3.1} in $\H^3$. For any power $\a>0$, if the Gauss curvature $K>1$ at $t=0$, then $K>1$ on $M_t$ for all $t\in [0,T)$. Moreover, we have
\begin{equation}\label{s3:K-0}
  K-1~\geq \min_{M_0}(K-1)\left(1-2^{\alpha}(\alpha+1)t \min_{M_0}(K-1)^{\frac{\alpha+1}2}\right)^{-\frac 2{\alpha+1}}
\end{equation}
on $M_t$ for $t\in [0,T)$. This implies an upper bound for the maximum existence time $T$:
\begin{equation*}
  T\leq \frac{2^{-\alpha}}{\alpha+1}\min_{M_0}(K-1)^{-\frac{\alpha+1}2}.
\end{equation*}
\end{pro}
\begin{proof}
Let $F=H^\a=f(\k)=(\k_1+\k_2)^\a$. Then
\begin{align*}
\dot{f}^1=\dot{f}^2=\a H^{\a-1},\quad \ddot{f}^{11}=\ddot{f}^{12}=\ddot{f}^{22}=\a(\a-1)H^{\a-2}.
\end{align*}
We apply \eqref{2.2} to calculate the evolution equation of the Gauss curvature $K$ along the flow \eqref{3.1}.  Let $G=K$ in \eqref{2.2}. We have
\begin{equation}\label{3.4}
\frac{\partial}{\partial t}G=\dot{F}^{ij}\nabla_i \nabla_j G + Q_1+Q_0,
\end{equation}
where we denote $Q_1,Q_0$ the gradient term and zero order term in the evolution of $G$. Since $G=g(\k)=\k_1\k_2$, the derivatives of $G$ are given by
\begin{align*}
\dot{g}^1=\k_2, \quad \dot{g}^2=\k_1, \quad \ddot{g}^{11}=\ddot{g}^{22}=0, \quad \ddot{g}^{12}=\ddot{g}^{21}=1.
\end{align*}
Then
a direct calculation gives that the zero order term $Q_0$ satisfies
\begin{align}\label{s3:1}
Q_0 = &(1-\a)K H^{\a+1}+2K(|A|^2+2)\a H^{\a-1}-(\a+1)H^{\a+1}\nonumber\\
=& (K-1)\left( H^2+\a(\k_1-\k_2)^2 \right) H^{\a-1}.
\end{align}
At the spatial minimum point of $K$, we have $\nabla_1K=\nabla_2K=0$, which implies that
\begin{equation*}
  \kappa_2\nabla_ih_{11}+\kappa_1\nabla_ih_{22}=0,\quad i=1,2.
\end{equation*}
By the general formula \eqref{2.7}, the gradient term $Q_1$ at the spatial minimum point of $K$ satisfies
\begin{align}\label{s3:2}
Q_1
=&\a\k_2\k_1^{-2}H^{\a-2}\left((\a-1)(\k_1-\k_2)^2+2H^2\right) (\nabla_1 h_{11})^2 \nonumber\\
&\quad +\a{\k_1}\k_2^{-2}H^{\a-2}\left((\a-1)(\k_1-\k_2)^2+2H^2\right) (\nabla_2 h_{22})^2
\end{align}
Applying maximum principle to \eqref{3.4} and using \eqref{s3:1} and \eqref{s3:2}, we have
\begin{equation}\label{s3:K-1}
  \frac d{dt}\min_{M_t}(K-1)~\geq ~(K-1)\left( H^2+\a(\k_1-\k_2)^2 \right) H^{\a-1},
\end{equation}
which implies that $K>1$ is preserved along the flow \eqref{3.1}. By the arithmetic-geometric means inequality $H=\kappa_1+\kappa_2\geq 2\sqrt{K}$ and $\alpha>0$, the inequality \eqref{s3:K-1} also implies
\begin{equation}\label{s3:K-2}
  \frac d{dt}\min_{M_t}(K-1)~\geq ~2^{\alpha+1}(K-1)^{1+\frac{\alpha+1}2}.
\end{equation}
The estimate \eqref{s3:K-0} follows by integrating \eqref{s3:K-2} in time.
\end{proof}

To show the curvature pinching estimate of $M_t$, we consider the following quantity on $M_t$:
\begin{align}\label{3.7}
G(x,t)=\frac{(\k_1+\k_2)^{2\a}(\k_1-\k_2)^2}{(\k_1\k_2-1)^2}.
\end{align}
Since $K=\kappa_1\kappa_2>1$ on $M_t$ for all $t\in [0,T)$, the quantity $G$ is well-defined. Firstly, we deduce the evolution equation of $G$ along the $H^\a$-flow \eqref{3.1}.
\begin{lem}\label{lemma-evol}
The evolution equation of $G$ along the $H^\a$-flow \eqref{3.1} satisfies
\begin{align}\label{3.8}
\frac{\partial}{\partial t}G=&\a H^{\a-1}\D G+\frac{2\a H^{5\a-3}}{(\k_1\k_2-1)^3}(a_1 T_1^2+a_2 T_2^2)
\end{align}
at the spatial maximum point, where $\Delta$ denotes the Laplacian operator with respect to the metric $g(t)$ on $M_t$, $T_1$ and $T_2$ are defined as \eqref{2.11}, and the coefficients $a_1$, $a_2$ are given by
\begin{align}\label{3.9}
a_1(\a,\k_1,\k_2):=&~4(\k_1-\k_2)^2(\k_1\k_2-1)^3 \cdot \a^2 \nonumber\\
                   &\quad-(\k_1-\k_2)(\k_1+\k_2)\biggl((3\k_2^2+1)\k_1^4+4\k_2(\k_2^2-3)\k_1^3 \nonumber \\
                   & \qquad+2(-3\k_2^4+\k_2^2+2)\k_1^2+4\k_2(\k_2^4-\k_2^2+2)\k_1 \nonumber\\
                   & \qquad +(-\k_2^6+\k_2^4-4)\biggr) \cdot \a \\
                   &\quad+(\k_1+\k_2) (\k_2^2-1)\biggl(\k_1^5-\k_2\k_1^4+(4-6\k_2^2)\k_1^3 \nonumber\\
                   & \qquad+2\k_2^3\k_1^2+(-3\k_2^4+12\k_2^2-8)\k_1-\k_2^5\biggr)\nonumber
\end{align}
and $a_2(\a,\k_1,\k_2):=a_1(\a,\k_2,\k_1)$.
\end{lem}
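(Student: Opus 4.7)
The plan is to reduce the lemma to a direct substitution into the general formulas assembled in Section \ref{sec:2}. Since $F=H^\alpha$ is homogeneous of degree $\alpha$ in the principal curvatures, Proposition \ref{pro-2.1} applies and the zero-order terms in the evolution \eqref{2.2} for $G$ cancel identically, leaving
\[
\frac{\partial G}{\partial t}=\dot F^{ij}\nabla_i\nabla_j G+Q_1,
\]
with $Q_1$ given by \eqref{2.7}. Because $\dot f^1=\dot f^2=\alpha H^{\alpha-1}$, the second-order operator collapses to $\alpha H^{\alpha-1}\Delta$, which already accounts for the first term on the right of \eqref{3.8}.

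For the gradient term I would evaluate $Q_1$ at the spatial maximum of $G$ via the general formula \eqref{2.13}, writing $Q_1=\mathcal Z T_1^2+\bigl(\mathcal Z|_{\kappa_1\leftrightarrow\kappa_2}\bigr)T_2^2$ with $\mathcal Z$ given by \eqref{2.19}. The simplification becomes systematic here: since all second derivatives $\ddot f^{ij}=\alpha(\alpha-1)H^{\alpha-2}$ are equal, the quadratic form $\ddot f^{11}\gamma^2+\ddot f^{22}\beta^2-2\ddot f^{12}\beta\gamma$ reduces to $\alpha(\alpha-1)H^{\alpha-2}(\gamma-\beta)^2$, and from \eqref{2.10}-\eqref{2.10-b} one obtains the clean identity $\gamma-\beta=-(\kappa_1^2+\kappa_2^2-2)H^\alpha$. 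The remaining two bracketed groups in \eqref{2.19} reduce via the identity $(\kappa_1^2-1)\dot f^1+(\kappa_2^2-1)\dot f^2=\alpha(\kappa_1^2+\kappa_2^2-2)H^{\alpha-1}$ together with direct substitution of $\beta,\gamma$, after which everything is polynomial in $\kappa_1,\kappa_2$ multiplied by an explicit power of $H$.

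The main obstacle is purely bookkeeping rather than conceptual. After expansion and extraction of the common prefactor $\frac{2\alpha H^{5\alpha-3}}{(\kappa_1\kappa_2-1)^3}$, the residual expression is quadratic in $\alpha$: the coefficient of $\alpha^2$ comes solely from the $\alpha H^{\alpha-1}$ pieces of $\beta,\gamma$ in \eqref{2.10}-\eqref{2.10-b}, producing the clean leading term $4(\kappa_1-\kappa_2)^2(\kappa_1\kappa_2-1)^3$; the $\alpha^1$ and $\alpha^0$ coefficients collect the mixed products between these pieces and the $(\kappa_i^2-1)H^\alpha$ pieces, along with the pure curvature contributions. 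Careful regrouping (by total degree in $\kappa_1,\kappa_2$) then reproduces the degree-seven polynomials displayed in \eqref{3.9} for $a_1$. Finally, the formula for $a_2$ follows without further computation from the symmetry $\kappa_1\leftrightarrow\kappa_2$ built into the general setup of \S\ref{sec:2-3}: under this swap one checks that $\beta\leftrightarrow-\gamma$, $\dot g^1\leftrightarrow\dot g^2$, and the roles of $T_1^2$ and $T_2^2$ are exchanged, so the coefficient of $T_2^2$ is obtained from that of $T_1^2$ by interchanging $\kappa_1$ and $\kappa_2$.
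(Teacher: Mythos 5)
Your proposal is correct and matches the paper's proof in both structure and substance: reduce to Proposition \ref{pro-2.1} and the general gradient-term formula \eqref{2.19}, substitute $\dot f^i=\alpha H^{\alpha-1}$ and $\ddot f^{ij}=\alpha(\alpha-1)H^{\alpha-2}$, simplify the quadratic form to $\alpha(\alpha-1)H^{\alpha-2}(\gamma-\beta)^2$ using $\gamma-\beta=-(\kappa_1^2+\kappa_2^2-2)H^\alpha$, extract the prefactor $\tfrac{2\alpha H^{5\alpha-3}}{(\kappa_1\kappa_2-1)^3}$, and obtain $a_2$ by the built-in $\kappa_1\leftrightarrow\kappa_2$ symmetry of \eqref{2.13}. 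The one step the paper makes explicit that you elide is the verification that $\beta$ and $\gamma$ (equivalently $\dot g^1,\dot g^2$) cannot vanish simultaneously — the paper notes that $\beta=\gamma=0$ would force $\kappa_1^2+\kappa_2^2=2$, contradicting $\kappa_1^2+\kappa_2^2-2\geq 2(\kappa_1\kappa_2-1)>0$; this is needed so that the $T_i^2$ of \eqref{2.11} are well-defined at the critical point. You should include that check, but otherwise the plan is sound and essentially identical to the authors'.
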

\begin{proof}
Firstly, since $F=H^{\alpha}$, the second order term in \eqref{2.4} becomes $\dot{F}^{ij}\nabla_i\nabla_jG=\alpha H^{\alpha-1}\Delta G$. This is just the first term of \eqref{3.8}. We next compute the gradient terms. Note that $\dot{g}^1,\dot{g}^2$ can not vanish at the same time. Using the equations \eqref{2.10} - \eqref{s2:2-17} and substituting $F=H^{\alpha}$, we have
\begin{equation*}
  (\dot{g}^1)^2+(\dot{g}^2)^2=\frac{2(\k_1-\k_2)^2H^{2\alpha}}{(\k_1\k_2-1)^6}(\b^2+\g^2),
\end{equation*}
where
\begin{align*}
\b=&\a H^{\a-1}(\k_1\k_2-1)(\k_1-\k_2)+H^{\a}(\k_2^2-1),\\
\g=&\a H^{\a-1}(\k_1\k_2-1)(\k_1-\k_2)-H^{\a}(\k_1^2-1).
\end{align*}
If $\beta=\gamma=0$, then $\k_1^2+\k_2^2-2=0$, which contradicts with
$$
\k_1^2+\k_2^2-2\geq 2(\k_1\k_2-1)>0.
$$
Therefore $\dot{g}^1,\dot{g}^2$ can not vanish at the same time.

By the general formula \eqref{2.19}, the coefficient in front of $T_1^2$ is given by
\begin{align*}
\mathcal{Z}
=&~\frac{2\a(\a-1)H^{5\a-2}(\k_1-\k_2)(\k_2^2-1)(\k_1^2+\k_2^2-2)^2}{(\k_1\k_2-1)^3} \\
 &+\frac{4\a H^{5\a-3}(\k_1^2+\k_2^2-2)}{\k_1\k_2-1}\biggl( -2(\k_1-\k_2)\a H \\
 &\quad \quad -\frac{(\k_2^2-1)^2 H^2}{(\k_1\k_2-1)^2} +(\k_1-\k_2)^2\a^2\biggr)\\
 &+\frac{4\a H^{5\a-3}(\k_1-\k_2)^2 }{\k_1\k_2-1}\biggl(-\frac{2(\k_1-\k_2)(\k_2^2-1)\a H}{\k_1\k_2-1} \\
 &\quad \quad +\frac{(\k_1^2-1)(\k_2^2-1)H^{2}}{(\k_1\k_2-1)^2} -(\k_1-\k_2)^2 \a^2  \biggr).
\end{align*}
Finally, we obtain
\begin{align*}
a_1:=&~\frac{(\k_1\k_2-1)^3}{2\a H^{5\a-3}}\mathcal{Z}\\
=&~(\a-1)(\k_1-\k_2)(\k_2^2-1)(\k_1^2+\k_2^2-2)^2H \\
&\quad-4\a H(\k_1-\k_2)(\k_1\k_2-1)^2(\k_1^2+\k_2^2-2)\\
&\quad-4\a H(\k_1-\k_2)^3(\k_2^2-1)(\k_1\k_2-1) \\
&\quad-2(\k_2^2-1)^2(\k_1^2+\k_2^2-2) H^2\\
&\quad+2(\k_1-\k_2)^2(\k_1^2-1)(\k_2^2-1)H^2 \\
&\quad +4\a^2(\k_1-\k_2)^2(\k_1\k_2-1)^3.
\end{align*}
Rewrite it as a quadratic polynomial of $\a$, we obtain the desired expression \eqref{3.9}.
\end{proof}

Now we apply maximum principle to the equation \eqref{3.8} to prove the monotonicity of $G$ for suitable range of $\alpha$.
\begin{theo}\label{theo-3.3}
Let $M_t$ be a smooth solution of \eqref{3.1} in $\H^3$ with $K>1$, where $\a\in [1/3,4]$. Then $\max_{M_t}G(\cdot,t)$ is non-increasing in time.
\end{theo}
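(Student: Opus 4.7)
The plan is to apply the parabolic maximum principle directly to the evolution equation \eqref{3.8}. If $G$ vanishes identically on some $M_t$, then $M_t$ is totally umbilical and the statement is trivial. Otherwise, at each time $t$ the spatial maximum of $G$ is attained at a point $p$ where $\kappa_1>\kappa_2$, so $G$ is smooth near $p$ and the calculations of \S\ref{sec:2-3} apply. At such a maximum we have $\nabla G=0$ and $\Delta G\leq 0$, and since $\alpha H^{\alpha-1}>0$, Lemma \ref{lemma-evol} yields
\[
\left.\frac{\partial G}{\partial t}\right|_{(p,t)} \;\leq\; \frac{2\alpha H^{5\alpha-3}}{(\kappa_1\kappa_2-1)^3}\bigl(a_1 T_1^2 + a_2 T_2^2\bigr).
\]
Because $K=\kappa_1\kappa_2>1$ by Proposition \ref{pro-3.1} and $T_i^2\geq 0$, it suffices to prove $a_1\leq 0$ and $a_2\leq 0$ for every $\alpha\in[1/3,4]$. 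Since $a_2(\alpha,\kappa_1,\kappa_2)=a_1(\alpha,\kappa_2,\kappa_1)$, it is enough to establish the single inequality $a_1(\alpha,\kappa_1,\kappa_2)\leq 0$ for \emph{all} pairs $(\kappa_1,\kappa_2)$ with $\kappa_1\kappa_2>1$ and $\kappa_1\neq \kappa_2$ (without any ordering assumption).

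The key structural observation is that, viewed as a function of $\alpha$, the expression $a_1$ displayed in \eqref{3.9} is a quadratic polynomial whose leading coefficient
\[
4(\kappa_1-\kappa_2)^2(\kappa_1\kappa_2-1)^3
\]
is strictly positive under our assumptions. Hence $\alpha\mapsto a_1(\alpha,\kappa_1,\kappa_2)$ is a convex parabola in $\alpha$, and consequently $a_1\leq 0$ throughout $[1/3,4]$ \emph{if and only if} $a_1(1/3,\kappa_1,\kappa_2)\leq 0$ and $a_1(4,\kappa_1,\kappa_2)\leq 0$. In this way the theorem reduces to verifying two explicit polynomial inequalities in $\kappa_1,\kappa_2$ under the single constraint $\kappa_1\kappa_2>1$.

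The main obstacle is exactly the verification of these two endpoint inequalities. The plan is to substitute $\alpha=1/3$ and $\alpha=4$ into \eqref{3.9} and then manipulate the resulting polynomials so that their signs become transparent. A natural approach is to work with the symmetric functions $H=\kappa_1+\kappa_2$ and $K=\kappa_1\kappa_2$ together with $(\kappa_1-\kappa_2)^2=H^2-4K$, and to factor out the manifestly non-negative quantities $H$, $(\kappa_1-\kappa_2)^2$ and $(K-1)$ from each endpoint polynomial, so that what remains can be bounded above by zero using $K>1$ (and $H^2\geq 4K$). Once $a_1(1/3)\leq 0$ and $a_1(4)\leq 0$ are established, convexity in $\alpha$ gives $a_1(\alpha)\leq 0$ on $[1/3,4]$, the displayed differential inequality above then gives $\partial_t G\leq 0$ at every spatial maximum, and Hamilton's maximum principle for time-dependent functions yields that $\max_{M_t}G(\cdot,t)$ is non-increasing in $t$.
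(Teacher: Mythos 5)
Your overall strategy matches the paper's exactly: apply the maximum principle to the evolution equation \eqref{3.8}, observe from \eqref{3.9} that each $a_i$ is a strictly convex quadratic in $\alpha$ (the leading coefficient $4(\kappa_1-\kappa_2)^2(\kappa_1\kappa_2-1)^3$ is positive on the relevant cone), and reduce nonpositivity over $\alpha\in[1/3,4]$ to the two endpoint checks $a_i(1/3,\kappa_1,\kappa_2)\leq 0$ and $a_i(4,\kappa_1,\kappa_2)\leq 0$. Your reformulation --- check $a_1$ only, but over all $(\kappa_1,\kappa_2)$ with $\kappa_1\kappa_2>1$ and $\kappa_1\neq\kappa_2$ without an ordering --- is equivalent to the paper's (check $a_1$ and $a_2$ under $\kappa_1>\kappa_2$), since $a_2(\alpha,\kappa_1,\kappa_2)=a_1(\alpha,\kappa_2,\kappa_1)$.

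The gap is that you only describe a plan for the endpoint verification and never carry it out, and the plan as stated is unlikely to work as smoothly as suggested. Because $a_1$ is \emph{not} symmetric in $(\kappa_1,\kappa_2)$, it is not a polynomial in $H$ and $K$ alone, so ``work with the symmetric functions $H,K$ and factor out $H$, $(\kappa_1-\kappa_2)^2$, $(K-1)$'' is not a routine rewriting, and there is no a priori reason those factors should appear. (Indeed, on the diagonal the paper finds $a_1(\cdot,\kappa_2,\kappa_2)=-16\kappa_2^2(\kappa_2^2-1)^3$, involving $(\kappa_2^2-1)$ rather than $(K-1)=(\kappa_2^2-1)$ only coincidentally there; off the diagonal the structure is different.) The endpoint inequalities are the technical heart of the theorem. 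The paper's actual verification fixes $\kappa_2$, sets $h_{i,\alpha}(x)=a_i(\alpha,x,\kappa_2)$, shows $h^{(6)}_{1,1/3}<0$, $h^{(7)}_{2,1/3}<0$, $h^{(6)}_{1,4}<0$, $h^{(7)}_{2,4}<0$ on $x>\kappa_2>0$, and then checks all lower-order derivatives at $x=\kappa_2$ (when $\kappa_2\geq 1$) or at $x=1/\kappa_2$ (when $0<\kappa_2\leq 1$), which together yield $h_{i,\alpha}(\kappa_1)\leq 0$ for $\kappa_1>\max\{\kappa_2,1/\kappa_2\}$. That is on the order of forty explicit sign checks. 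Without either executing such a verification or supplying a genuinely different one, your argument is incomplete: the convexity reduction is the easy step, and the endpoint inequalities are where the entire content of the theorem resides.
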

\begin{proof}
To this theorem, we need to show the coefficients $a_1,a_2$ of the gradient terms are non-positive at the spatial maximum point of $G$. At the spatial maximum point $(p,t)$ of $G$, we assume that $G$ is nonzero (otherwise $M_t$ is a sphere and the proof is trivial).  Without loss of generality, we further assume that $\k_1>\k_2$. Define a convex subset $\mathcal{C}$ of $\R^2$ by
$$
\mathcal{C}:=\{ (\k_1,\k_2)\in \R^2 ~|~ \k_1\k_2>1, \k_1>\k_2 \}.
$$
By  \eqref{3.9}, the coefficients $a_i(\a,\k_1,\k_2)$, $i=1,2$ are strictly convex functions of $\a$ for any fixed point $(\k_1,\k_2)\in \mathcal{C}$. We claim that for all $(\k_1,\k_2)\in \mathcal{C}$,
$$
a_i(1/3,\k_1,\k_2) \leq 0, \quad a_i(4,\k_1,\k_2) \leq 0.
$$
This would imply that $a_i(\a,\k_1,\k_2) \leq 0$ for all $(\k_1,\k_2)\in \mathcal{C}$ provided that $\a\in [1/3,4]$.

To verify the claim, we denote $h_{i,\a}(x):=a_i(\a,x,\k_2)$, where $(x,\k_2)\in \mathcal{C}$. It suffices to show that
$$
h_{i,1/3}(\k_1)\leq 0, \quad h_{i,4}(\k_1)\leq 0.
$$
Note that for any $(\k_1,\k_2)\in \mathcal{C}$, we have $\k_1>\max\{\k_2,1/\k_2\}$.

\textbf{Claim 1:} $h_{i,1/3}(\k_1)\leq 0$, $i=1,2$.

We denote $h_{i,\a}^{(k)}(x)$ the $k$-th derivative of $h_{i,\a}(x)$ with respect to $x$. Clearly, we have
$$
h_{1,1/3}^{(6)}(x)=-960<0, \quad h_{2,1/3}^{(7)}(x)=-13440(\k_2+4x)<0,
$$
for all $x>\k_2>0$.
\begin{enumerate}[(i)]
\item If $\k_2\geq \frac{1}{\k_2}$, then $\k_2\geq 1$; The derivatives of $h_{i,1/3}(x)$ at $x=\k_2$ satisfy
\begin{align*}
h_{1,1/3}^{(5)}(\k_2)=&-\frac{160}{3}\k_2(9+2\k_2^2) \leq 0;\\
h_{1,1/3}^{(4)}(\k_2)=&-32(4-7\k_2^2+7\k_2^4) \leq 0;\\
h_{1,1/3}^{(3)}(\k_2)=&-32\k_2(\k_2^2-1)(6\k_2^2-5) \leq 0;\\
h_{1,1/3}^{(2)}(\k_2)=&-\frac{32}{9}(\k_2^2-1)^2(29\k_2^2-1)\leq 0;\\
h_{1,1/3}^{(1)}(\k_2)=&-\frac{16}{3}\k_2(\k_2^2-1)^2(8\k_2^2-5)\leq 0;\\
h_{1,1/3}(\k_2)=&-16\k_2^2(\k_2^2-1)^3\leq 0.
\end{align*}
and
\begin{align*}
h_{2,1/3}^{(6)}(\k_2)=&-960(-1+44\k_2^2)\leq 0;\\
h_{2,1/3}^{(5)}(\k_2)=&-\frac{160}{3}\k_2(-51+338\k_2^2)\leq 0;\\
h_{2,1/3}^{(4)}(\k_2)=&-32\k_2^2(-79+187\k_2^2)\leq 0;\\
h_{2,1/3}^{(3)}(\k_2)=&-32\k_2(-3+4\k_2^2)(-1+13\k_2^2)\leq 0;\\
h_{2,1/3}^{(2)}(\k_2)=&-\frac{32}{9}(\k_2^2-1)(-1-40\k_2^2+113\k_2^4) \leq 0;\\
h_{2,1/3}^{(1)}(\k_2)=&-\frac{16}{3}\k_2(\k_2^2-1)^2(16\k_2^2-1)\leq 0;\\
h_{2,1/3}(\k_2)=&-16\k_2^2(\k_2^2-1)^3\leq 0.
\end{align*}
\item If $\k_2\leq \frac{1}{\k_2}$, then $0<\k_2\leq 1$; The derivatives of $h_{i,1/3}(x)$ at $x=1/\k_2$ satisfy
\begin{align*}
h_{1,1/3}^{(5)}(1/\k_2)=&-\frac{160}{3\k_2}(18-9\k_2^2+2\k_2^4) \leq 0;\\
h_{1,1/3}^{(4)}(1/\k_2)=&-\frac{32}{3\k_2^2}(45-33\k_2^2-11\k_2^4+11\k_2^6) \leq 0;\\
h_{1,1/3}^{(3)}(1/\k_2)=&-\frac{16}{3\k_2^3}(1-\k_2^2)(5+4\k_2^2)(6-3\k_2^2-\k_2^4)\leq 0;\\
h_{1,1/3}^{(2)}(1/\k_2)=&-\frac{8}{3\k_2^4}(1-\k_2^2)^2(15+24\k_2^2+3\k_2^4+2\k_2^6)\leq 0;\\
h_{1,1/3}^{(1)}(1/\k_2)=&-\frac{4}{3\k_2^5}(1-\k_2^2)^2(6+13\k_2^2-6\k_2^4-3\k_2^6+2\k_2^8) \leq 0;\\
h_{1,1/3}(1/\k_2)=&-\frac{4}{3\k_2^6}(1-\k_2^2)^4(1+\k_2^2)(1+4\k_2^2+\k_2^4)\leq 0.
\end{align*}
and
\begin{align*}
h_{2,1/3}^{(6)}(1/\k_2)=&-\frac{960}{\k_2^2}(28+13\k_2^2+2\k_2^4)\leq 0;\\
h_{2,1/3}^{(5)}(1/\k_2)=&-\frac{160}{3\k_2^3}(168+108\k_2^2+3\k_2^4+8\k_2^6)\leq 0;\\
h_{2,1/3}^{(4)}(1/\k_2)=&-\frac{32}{3\k_2^4}(210+165\k_2^2-75\k_2^4+13\k_2^6+11\k_2^8)\leq0;\\
h_{2,1/3}^{(3)}(1/\k_2)=&-\frac{16}{3\k_2^5}(84+75\k_2^2-105\k_2^4+4\k_2^6+13\k_2^8+\k_2^{10})\leq0;\\
h_{2,1/3}^{(2)}(1/\k_2)=&-\frac{8}{3\k_2^6}(1-\k_2^2)(28+55\k_2^2-25\k_2^4-15\k_2^6-5\k_2^8)\leq 0;\\
h_{2,1/3}^{(1)}(1/\k_2)=&-\frac{4}{3\k_2^7}(1-\k_2^2)^2(8+24\k_2^2-3\k_2^4-14\k_2^6-3\k_2^8) \leq 0;\\
h_{2,1/3}(1/\k_2)=&-\frac{4}{3\k_2^8}(1-\k_2^2)^4(1+\k_2^2)(1+4\k_2^2+\k_2^4)\leq 0.
\end{align*}
\end{enumerate}
In both cases, we have $h_{i,1/3}(\k_1)\leq 0$.

\textbf{Claim 2:} $h_{i,4}(\k_1)\leq 0$, $i=1,2$.

Firstly, we have
$$
h_{1,4}^{(6)}(x)=-720(5+11\k_2^2)<0,\quad h_{2,4}^{(7)}(x)=-20160(10x-3\k_2)<0,
$$
for all $x>\k_2>0$.
\begin{enumerate}[(i)]
\item If $\k_2\geq \frac{1}{\k_2}$, then $\k_2\geq 1$; The derivatives of $h_{i,4}$ at $x=\k_2$:
\begin{align*}
h_{1,4}^{(5)}(\k_2)=&-2160 \k_2(\k_2^2-1) \leq 0;\\
h_{1,4}^{(4)}(\k_2)=&-96(5+5\k_2^2+6\k_2^4)\leq 0;\\
h_{1,4}^{(3)}(\k_2)=&-456\k_2(\k_2^2-1)(\k_2^2+1)\leq 0;\\
h_{1,4}^{(2)}(\k_2)=&-80(\k_2^2-1)^2(3\k_2^2+1) \leq 0;\\
h_{1,4}^{(1)}(\k_2)=&-8\k_2(\k_2^2-1)^2(9\k_2^2-7) \leq 0;\\
h_{1,4}(\k_2)=&-16\k_2^2(\k_2^2-1)^3\leq 0.
\end{align*}
and
\begin{align*}
h_{2,4}^{(6)}(\k_2)=&-720(-5+77\k_2^2) \leq 0;\\
h_{2,4}^{(5)}(\k_2)=&-240\k_2(-15+47\k_2^2) \leq 0;\\
h_{2,4}^{(4)}(\k_2)=&-96\k_2^2(25+11\k_2^2) \leq 0;\\
h_{2,4}^{(3)}(\k_2)=&-24\k_2(-51+60\k_2^2+7\k_2^4) \leq 0;\\
h_{2,4}^{(2)}(\k_2)=&-32(\k_2^2-1)(-5+9\k_2^2+\k_2^4) \leq 0;\\
h_{2,4}^{(1)}(\k_2)=&-8\k_2(\k_2^2-1)^2(3+7\k_2^2)\leq 0;\\
h_{2,4}(\k_2)=&-16\k_2^2(\k_2^2-1)^3\leq 0.
\end{align*}
\item If $\k_2\leq \frac{1}{\k_2}$, then $0<\k_2\leq 1$; The derivatives of $h_{i,4}$ at $x=1/\k_2$:
\begin{align*}
h_{1,4}^{(5)}(1/\k_2)=&-\frac{720}{\k_2}(1-\k_2^2)(5+8\k_2^2)\leq0;\\
h_{1,4}^{(4)}(1/\k_2)=&-\frac{24}{\k_2^2}(75-55\k_2^2-55\k_2^4+99\k_2^6)\leq 0;\\
h_{1,4}^{(3)}(1/\k_2)=&-\frac{24}{\k_2^3}(1-\k_2^2)(25-20\k_2^2+6\k_2^4+15\k_2^6)\leq 0;\\
h_{1,4}^{(2)}(1/\k_2)=&-\frac{2}{\k_2^4}(1-\k_2^2)^2(75-45\k_2^2+37\k_2^4+21\k_2^6)\leq 0;\\
h_{1,4}^{(1)}(1/\k_2)=&-\frac{2}{\k_2^5}(1-\k_2^2)^2(15-17\k_2^2+7\k_2^4+9\k_2^6-6\k_2^8) \leq 0;\\
h_{1,4}(1/\k_2)=&-\frac{1}{\k_2^6}(1-\k_2^2)^4(1+\k_2^2)(5-2\k_2^2+5\k_2^4)\leq 0.
\end{align*}
and
\begin{align*}
h_{2,4}^{(6)}(1/\k_2)=&-\frac{720}{\k_2^2}(140-89\k_2^2+21\k_2^4) \leq 0;\\
h_{2,4}^{(5)}(1/\k_2)=&-\frac{240}{\k_2^3}(140-141\k_2^2+63\k_2^4-30\k_2^6)\leq 0;\\
h_{2,4}^{(4)}(1/\k_2)=&-\frac{24}{\k_2^4}(350-495\k_2^2+315\k_2^4-125\k_2^6+99\k_2^8) \leq 0;\\
h_{2,4}^{(3)}(1/\k_2)=&-\frac{24}{\k_2^5}(70-130\k_2^2+105\k_2^4-26\k_2^6+9\k_2^8-12\k_2^{10}) \leq 0;\\
h_{2,4}^{(2)}(1/\k_2)=&-\frac{2}{\k_2^6}(1-\k_2^2)(140-187\k_2^2+128\k_2^4+46\k_2^6-52\k_2^8-11\k_2^{10}) \leq 0;\\
h_{2,4}^{(1)}(1/\k_2)=&-\frac{2}{\k_2^7}(1-\k_2^2)^2(20-17\k_2^2+9\k_2^4+9\k_2^6-13\k_2^8)\leq 0;\\
h_{2,4}(1/\k_2)=&-\frac{1}{\k_2^8}(1-\k_2^2)^4(1+\k_2^2)(5-2\k_2^2+5\k_2^4)\leq 0.
\end{align*}
\end{enumerate}
In both cases, we have $h_{i,4}(\k_1)\leq 0$.

In summary, we conclude that the gradient terms are non-positive at the spatial maximum point of $G$. Applying the maximum principle we conclude that $G$ is monotone non-increasing along the $H^\a$-flow with $\a \in [1/3,4]$, which completes the proof.
\end{proof}


We have proved that $\max_{M_t}G(x,t)$ is monotone non-increasing along the $H^\a$-flow for $\a\in [1/3,4]$. By assuming further $\a\in [1,4]$, we can obtain the curvature pinching estimate of $M_t$.
\begin{cor}\label{cor-3.5}
Let $M_t, t\in [0,T)$ be a smooth solution of $H^\a$-flow with $K>1$ in $\H^3$, where $\a\in [1,4]$. There exists a constant $C>0$ depending only on the initial surface $M_0$ and $\alpha$ such that
\begin{align}\label{s3:pinc-1}
0<\frac{1}{C}\leq \frac{\k_1}{\k_2} \leq C
\end{align}
on $M_t$ for all $t\in [0,T)$.
\end{cor}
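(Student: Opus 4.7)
The plan is to read off the pinching bound directly from the uniform estimate on the quantity $G$ supplied by Theorem~\ref{theo-3.3}. Since $[1,4]\subset[1/3,4]$, that theorem gives $G(x,t)\leq C_0:=\max_{M_0}G$ on $M^2\times[0,T)$. Setting $r:=\k_1/\k_2\geq 1$ (using the convention $\k_1\geq\k_2$ of \S\ref{sec:2}), the two elementary identities
\begin{equation*}
 (\k_1-\k_2)^2 \,=\, \tfrac{K}{r}(r-1)^2, \qquad H^{2\a} \,\geq\, (2\sqrt K)^{2\a} \,=\, 4^\a K^\a
\end{equation*}
(the second being the AM--GM inequality $H\geq 2\sqrt K$, valid since $K>1$ by Proposition~\ref{pro-3.1}) allow me to rewrite the definition \eqref{3.7} of $G$ as
\begin{equation*}
 G ~\geq~ 4^\a\cdot\frac{K^{\a+1}}{(K-1)^2}\cdot\frac{(r-1)^2}{r}.
\end{equation*}

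The key observation, and the reason the range of admissible $\a$ shrinks from $[1/3,4]$ to $[1,4]$, is that for $\a\geq 1$ and $K>1$ one has the scalar inequality $(K-1)^2\leq K^2\leq K^{\a+1}$. Thus the middle factor above is $\geq 1$, and combining with $G\leq C_0$ gives the pointwise bound
\begin{equation*}
 \frac{(r-1)^2}{r} ~\leq~ \frac{C_0}{4^\a}
\end{equation*}
uniformly in space and time. The function $r\mapsto(r-1)^2/r$ is continuous on $[1,\infty)$, vanishes at $r=1$ and diverges as $r\to\infty$, so this forces $r\leq C$ for an explicit constant $C=C(\a,C_0)$. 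The lower bound $1/C\leq r$ is automatic from $r\geq 1$, completing the estimate.

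The main analytic work has already been carried out in Theorem~\ref{theo-3.3}; the present deduction is an algebraic unpacking and presents no real obstacle. It is worth emphasizing, however, that the scalar inequality $(K-1)^2\leq K^{\a+1}$ genuinely requires $\a\geq 1$ and would fail on $K\in(1,\infty)$ for $\a<1$; below that threshold one would need an independent upper bound on $K$, which is not available at this stage of the argument and accounts for the restriction to $\a\in[1,4]$ in the statement.
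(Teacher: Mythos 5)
Your proof is correct, and it is a genuinely (if mildly) different route from the paper's. Both arguments reduce to bounding $\frac{\k_1}{\k_2}+\frac{\k_2}{\k_1}-2=\frac{(\k_1-\k_2)^2}{\k_1\k_2}=\frac{(r-1)^2}{r}$, and both start from $G\leq C_0$ together with $H^{2\a}\geq 4^{\a}K^{\a}$; the divergence is in how the factor $\frac{(K-1)^2}{K^{\a+1}}$ is handled. The paper absorbs $K>K-1$ into the \emph{denominator}, writing $\frac{(K-1)^2}{K^{\a+1}}\leq(K-1)^{1-\a}$, and then must invoke the quantitative lower bound \eqref{s3:K-0} from Proposition~\ref{pro-3.1} (equivalently, that $\min_{M_t}(K-1)$ is non-decreasing, so $K-1\geq\min_{M_0}(K-1)>0$) to control $(K-1)^{1-\a}$ when $\a>1$. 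You instead absorb $K-1<K$ into the \emph{numerator}, getting $\frac{(K-1)^2}{K^{\a+1}}\leq K^{1-\a}\leq 1$, which holds pointwise for $K>1$ and $\a\geq1$ with no further input. Your version is thus self-contained: it uses only the qualitative preserved condition $K>1$ and the monotonicity of $G$, and does not need the explicit decay estimate \eqref{s3:K-0}. Your diagnosis of where $\a\geq 1$ enters is also accurate and matches the paper's (both inequalities fail for $\a<1$ as $K\to\infty$), so there is no loss of insight in your route.
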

\begin{proof}
By the monotonicity of $G$, we have
$$
\frac{(\k_1+\k_2)^{2\a}(\k_1-\k_2)^2}{(\k_1\k_2-1)^2}\leq C_1:=\max_{M_0}G(\cdot,0).
$$
If $\a\geq 1$, then
\begin{align*}
\frac{\k_1}{\k_2}+\frac{\k_2}{\k_1}-2= &\frac{(\k_1+\k_2)^{2\a}(\k_1-\k_2)^2}{(\k_1\k_2-1)^2}\frac{(\k_1\k_2-1)^2}{\k_1\k_2(\k_1+\k_2)^{2\alpha}}\\
\leq &2^{-2\alpha}C_1(\k_1\k_2-1)^{1-\alpha},
\end{align*}
which is bounded from above by Proposition \eqref{pro-3.1} and $\alpha\geq 1$. The estimate \eqref{s3:pinc-1} follows immediately.
\end{proof}

\section{Flow in $\mathbb{H}^3$ by powers of scalar curvature}\label{sec:4}
In this section, we consider the flow for surfaces in $\H^3$ by powers of scalar curvature, i.e.,
\begin{align}\label{4.1}
\left\{\begin{aligned}\frac{\partial}{\partial t} X(x,t)=&-(K(x,t)-1)^\a \nu(x,t),\quad \a>0\\
X(x,0)=&X_0(x).
\end{aligned}\right.
\end{align}

\begin{pro}\label{pro-4.1}
Let $M_t$, $t\in [0,T)$ be a smooth solution of \eqref{4.1} in $\H^3$. For any power $\a>0$, if $K>1$ on $M_0$, then $K>1$ on $M_t$ for all $t\in [0,T)$. Moreover, we have the estimate
\begin{equation}\label{s4:K1}
K-1\geq \min_{M_0}(K-1)\left(1-(2\alpha+1)\min_{M_0}(K-1)^{\alpha+\frac 12}t\right)^{-\frac 2{2\alpha+1}}
\end{equation}
on $M_t$ for $t\in [0,T)$.
\end{pro}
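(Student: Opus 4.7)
The plan is to mimic the proof of Proposition 3.1, applying the general evolution equation \eqref{2.2} with $G=K=g(\kappa)=\kappa_1\kappa_2$ along the flow \eqref{4.1} whose speed is $F=(K-1)^\alpha=f(\kappa)=(\kappa_1\kappa_2-1)^\alpha$, and then invoking the parabolic maximum principle at a spatial minimum of $K$.

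First I would record the derivatives needed to feed into \eqref{2.2} and \eqref{2.7}: for $G$ one has $\dot g^1=\kappa_2$, $\dot g^2=\kappa_1$, $\ddot g^{11}=\ddot g^{22}=0$, $\ddot g^{12}=1$; for $F$ one has $\dot f^1=\alpha(K-1)^{\alpha-1}\kappa_2$, $\dot f^2=\alpha(K-1)^{\alpha-1}\kappa_1$, $\ddot f^{ii}=\alpha(\alpha-1)(K-1)^{\alpha-2}\kappa_{3-i}^2$ and $\ddot f^{12}=\alpha(\alpha-1)(K-1)^{\alpha-2}\kappa_1\kappa_2+\alpha(K-1)^{\alpha-1}$.

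Next I would compute the zero-order term $Q_0$ of \eqref{2.2} with $c=-1$. The building blocks simplify to $\dot F^{kl}h_{kl}=2\alpha K(K-1)^{\alpha-1}$, $\dot G^{ij}(h^2)_{ij}=KH$, $\dot F^{kl}(h^2)_{kl}=\alpha KH(K-1)^{\alpha-1}$, $\dot G^{ij}h_{ij}=2K$, $\dot G^{ij}g_{ij}=H$ and $\dot F^{kl}g_{kl}=\alpha H(K-1)^{\alpha-1}$. Plugging these into $Q_0$, every term involving $\alpha(K-1)^{\alpha-1}$ cancels and one is left with the clean identity
\begin{equation*}
Q_0=(K-1)^{\alpha}HK+c(K-1)^{\alpha}H=H(K-1)^{\alpha+1}.
\end{equation*}

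For the gradient term $Q_1$ in \eqref{2.7}, I would work at a spatial minimum of $K$ with $h_{ij}$ diagonalised so that $h_{ij}=\mathrm{diag}(\kappa_1,\kappa_2)$. The critical condition $\nabla_i K=0$ gives $\kappa_2\nabla_i h_{11}+\kappa_1\nabla_i h_{22}=0$ for $i=1,2$. Using $\ddot g^{11}=\ddot g^{22}=0$ and $\dot g^1\dot f^2-\dot g^2\dot f^1=0$, the diagonal coefficients group together as
\begin{equation*}
\alpha(\alpha-1)(K-1)^{\alpha-2}\kappa_i\bigl(\kappa_2\nabla_i h_{11}+\kappa_1\nabla_i h_{22}\bigr)^2=0,
\end{equation*}
while the off-diagonal $(\nabla_i h_{12})^2$ coefficients vanish because $\dot g^1\dot f^2=\dot g^2\dot f^1$. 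Thus $Q_1\equiv 0$ at any spatial minimum of $K$, which is the essentially algebraic consequence of $F$ being a function of $K$ alone.

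Combining these two steps, the maximum principle gives $\frac{d}{dt}\min_{M_t}(K-1)\geq H(K-1)^{\alpha+1}$. Since $H\geq 2\sqrt{K}\geq 2\sqrt{K-1}$ (using $K>K-1$), this yields the scalar ODE differential inequality $\frac{d}{dt}\min_{M_t}(K-1)\geq 2\min_{M_t}(K-1)^{\alpha+3/2}$. Integrating $v'=-(2\alpha+1)$ for $v=u^{-(\alpha+1/2)}$ produces exactly the bound \eqref{s4:K1}, and positivity of the right-hand side shows $K>1$ is preserved on $[0,T)$. The only mildly technical point is the careful algebra verifying $Q_0=H(K-1)^{\alpha+1}$ and $Q_1=0$; I do not expect any genuine obstacle, since once one notices that $F$ depends on $\kappa$ only through $K$, the cancellations are forced.
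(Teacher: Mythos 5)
Your argument is correct and reaches the same differential inequality $\tfrac{d}{dt}\min_{M_t}(K-1)\geq 2(K-1)^{\alpha+3/2}$ as the paper, but by a slightly different route. The paper's proof is shorter because it applies the evolution equation \eqref{s2:speed} for the speed function $F=(K-1)^\alpha$ directly; since \eqref{s2:speed} has no separate gradient terms, one immediately gets $\partial_t(K-1)^\alpha=\alpha(K-1)^{\alpha-1}\dot K^{ij}\nabla_i\nabla_j(K-1)^\alpha+\alpha(K-1)^{2\alpha}H$, and the maximum principle gives the same reaction ODE with no algebra to do at the critical point. You instead invoke the general formula \eqref{2.2} for $G=K$, which forces you to check that the gradient quantity $Q_1$ vanishes at a spatial minimum of $K$. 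That check is correct: the off-diagonal terms drop out because $\dot g^1\dot f^2-\dot g^2\dot f^1=0$, and the diagonal blocks factor as $\alpha(\alpha-1)(K-1)^{\alpha-2}\dot g^i(\kappa_2\nabla_ih_{11}+\kappa_1\nabla_ih_{22})^2$, which vanish since $\nabla_iK=0$. (Minor slip: the prefactor should be $\dot g^i=\kappa_{3-i}$, not $\kappa_i$, in your displayed identity; it is multiplied by a vanishing square so the conclusion is unaffected, but do get it right.) Your computation of $Q_0=H(K-1)^{\alpha+1}$ also checks out. The trade-off is that the paper's route exploits the special fact that the speed is itself a function of $K$, whereas yours is the more systematic route via the general symmetric-function evolution and would apply verbatim to test functions $G$ that are not the speed; both are valid here.
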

\begin{proof}
By \eqref{s2:speed}, the speed function $F=(K-1)^{\alpha}$ of the flow \eqref{4.1} satisfies the following evolution equation
\begin{align}\label{s4:K-evl1}
  \frac{\partial}{\partial t}(K-1)^{\alpha} =& \alpha (K-1)^{\alpha-1}\dot{K}^{ij}\nabla_i\nabla_j(K-1)^{\alpha} +\alpha(K-1)^{2\alpha-1}\dot{K}^{ij}\left((h^2)_{ij}-g_{ij}\right) \nonumber\\
  = & \alpha (K-1)^{\alpha-1}\dot{K}^{ij}\nabla_i\nabla_j(K-1)^{\alpha} +\alpha(K-1)^{2\alpha}H.
\end{align}
By applying the maximum principle to \eqref{s4:K-evl1}, the spatial minimum of $K-1$ is non-decreasing in time and remains positive for $t>0$. Then the equation \eqref{s4:K-evl1} is equivalent to
\begin{align*}
  \frac{\partial}{\partial t}(K-1) = & \dot{K}^{ij}\nabla_i\nabla_j(K-1)^{\alpha} +(K-1)^{\alpha+1}H,
\end{align*}
which implies
\begin{align}\label{s4:K-evl2}
  \frac{d}{d t}\min_{M_t}(K-1) \geq & 2(K-1)^{\alpha+\frac 32}.
\end{align}
Integrating \eqref{s4:K-evl2} gives the estimate \eqref{s4:K1}.
\end{proof}

Following the similar idea as before, we consider the quantity
\begin{equation}\label{s4:G}
G(x,t):=(\k_1\k_2-1)^{2\a-2}(\k_1-\k_2)^2
\end{equation}
to deduce the pinching estimate of the principal curvatures of $M_t$. Applying Proposition \ref{pro-2.1} and the general formula \eqref{2.19}, we have the evolution equation of $G$ along the flow \eqref{4.1}.
\begin{lem}\label{lem-4.2}
Along the flow \eqref{4.1}, the evolution of $G$ satisfies
\begin{align}\label{4.6}
\frac{\partial}{\partial t}G= \alpha (K-1)^{\alpha-1}\dot{K}^{ij}\nabla_i\nabla_j G+2\a(K-1)^{5\a-3}(a_1 T_1^2+a_2 T_2^2)
\end{align}
at the spatial maximum point of $G$. Here $T_1$, $T_2$ are defined as \eqref{2.11}, and the coefficients $a_1$, $a_2$ are given by
\begin{align}\label{4.7}
a_1(\a,\k_1,\k_2):=&4\k_1 \k_2^2 (\k_1-\k_2)^2 \cdot \a^2 \nonumber\\
                   &+(\k_1-\k_2)(\k_1^2 - 2 \k_1 \k_2 + 5 \k_2^2 - 5 \k_1^2 \k_2^2 + 2 \k_1 \k_2^3 - \k_2^4)\cdot \a \nonumber\\
                   &+(\k_2^2-1)(\k_1^3 + 4 \k_2 - 3 \k_1^2 \k_2 - \k_1 \k_2^2 - \k_2^3)
\end{align}
and $a_2(\a,\k_1,\k_2):=a_1(\a,\k_2,\k_1)$.
\end{lem}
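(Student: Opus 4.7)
The strategy is to combine Proposition \ref{pro-2.1}(ii) with the general gradient formula \eqref{2.19} of Section \ref{sec:2-3}, specialised to $F = (K-1)^\a$.

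By Proposition \ref{pro-2.1}(ii), the evolution equation \eqref{2.2} of $G$ along the flow \eqref{4.1} collapses to \eqref{2.4} with vanishing zero-order terms. Since $\dot{F}^{ij} = \a(K-1)^{\a-1}\dot{K}^{ij}$, the second-order part of \eqref{2.4} directly yields the first term on the right of \eqref{4.6}, so only the gradient term at a spatial maximum of $G$ remains to be identified with $2\a(K-1)^{5\a-3}(a_1T_1^2 + a_2T_2^2)$.

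First I would record the derivatives $\dot{f}^i, \ddot{f}^{ij}$ of $f(\k) = (\k_1\k_2-1)^\a$ and, using the identity $(K-1)^{\a-1}(\k_1\k_2-1) = (K-1)^\a$, reduce the auxiliary quantities $\b, \g$ from \eqref{2.10}--\eqref{2.10-b} to
\begin{align*}
\b = (K-1)^\a\bigl(\a\k_2(\k_1-\k_2) + (\k_2^2-1)\bigr), \qquad \g = (K-1)^\a\bigl(\a\k_1(\k_1-\k_2) - (\k_1^2-1)\bigr).
\end{align*}
A short calculation gives the key identity $\k_2\g - \k_1\b = -(K-1)^{\a+1}(\k_1+\k_2)$, which is nonzero (since $K>1$ on a closed surface forces $\k_1,\k_2>0$); in particular $\b$ and $\g$ do not vanish simultaneously, so the representation \eqref{2.11}, \eqref{2.13}, \eqref{2.19} is applicable at any spatial maximum of $G$.

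Next I substitute into \eqref{2.19} to evaluate the coefficient $\mathcal{Z}$ of $T_1^2$. A helpful preliminary simplification is the algebraic identity
\begin{align*}
\ddot{f}^{11}\g^2 + \ddot{f}^{22}\b^2 - 2\ddot{f}^{12}\b\g = \a(\a-1)(K-1)^{\a-2}(\k_2\g - \k_1\b)^2 - 2\a(K-1)^{\a-1}\b\g,
\end{align*}
which, combined with the explicit formula for $\k_2\g - \k_1\b$ just displayed, collapses the first line of \eqref{2.19}. After this step each of the three displayed lines of \eqref{2.19} becomes a product of some power of $(K-1)$ and a polynomial in $\k_1, \k_2, \a$. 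Extracting the common factor $(K-1)^{5\a-3}$ (the remaining $(K-1)^{\pm 1}$ factors clear using $K-1 = \k_1\k_2-1$) and dividing by $2\a$ produces a quadratic polynomial in $\a$ with polynomial coefficients in $\k_1, \k_2$. Matching the $\a^2$-, $\a^1$-, and $\a^0$-coefficients with those of $a_1$ in \eqref{4.7} completes the identification of $\mathcal{Z}$, and the coefficient $a_2$ of $T_2^2$ then follows from the $\k_1 \leftrightarrow \k_2$ symmetry built into \eqref{2.13}.

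The main obstacle is the algebraic bookkeeping: after substitution, \eqref{2.19} produces roughly a dozen monomials in $\k_1, \k_2, \a$ with varying powers of $(K-1)$. Reducing them all to the common prefactor $2\a(K-1)^{5\a-3}$ and verifying the quartic-in-$(\k_1,\k_2)$ linear-in-$\a$ coefficient in \eqref{4.7} is tedious; I would organise the calculation around the identity for $\k_2\g - \k_1\b$ above and cross-check with a symbolic algebra system to avoid arithmetic slips. No conceptual obstruction arises beyond this.
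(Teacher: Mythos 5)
Your proposal is correct and takes essentially the same route as the paper: invoke Proposition~\ref{pro-2.1}(ii) to eliminate the zero-order terms, note that $\dot F^{ij}=\a(K-1)^{\a-1}\dot K^{ij}$ gives the second-order part, verify that $\b,\g$ (equivalently $\dot g^1,\dot g^2$) cannot vanish simultaneously, and then evaluate the gradient coefficient via the general formula~\eqref{2.19} and the symmetry of~\eqref{2.13}. The intermediate identities you record — the explicit $\b,\g$ for $f=(\k_1\k_2-1)^\a$, the relation $\k_2\g-\k_1\b=-(K-1)^{\a+1}(\k_1+\k_2)$, and the collapse of $\ddot f^{11}\g^2+\ddot f^{22}\b^2-2\ddot f^{12}\b\g$ — are all correct and merely organize the bookkeeping that the paper performs directly.
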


We now apply maximum principle to \eqref{4.6} to prove the monotonicity of $\max_{M_t}G(x,t)$ along the flow \eqref{4.1} with $\a\in [1/4,1]$.
\begin{theo}\label{theo-4.3}
Let $M_t$, $t\in [0,T)$ be a smooth solution of the flow \eqref{4.1} in $\H^3$ with $K>1$, where $\a\in [1/4,1]$. Then $\max_{M_t}G(x,t)$ is monotone non-increasing in time.
\end{theo}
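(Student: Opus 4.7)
The structure parallels the proof of Theorem~\ref{theo-3.3}. Starting from the evolution equation \eqref{4.6} established in Lemma~\ref{lem-4.2}, the plan is to apply the parabolic maximum principle at a point where $\max_{M_t}G$ is attained. Since the second order term $\alpha(K-1)^{\alpha-1}\dot{K}^{ij}\nabla_i\nabla_j G$ is non-positive at such a spatial maximum (the Hessian of $G$ is non-positive and $\dot{K}^{ij}=\mathrm{diag}(\kappa_2,\kappa_1)$ is positive definite since $\kappa_1\kappa_2>1$ forces both principal curvatures to have the same sign, which we take positive), the conclusion $\frac{d}{dt}\max_{M_t}G\le 0$ reduces to showing that the gradient coefficients satisfy $a_1(\alpha,\kappa_1,\kappa_2)\le 0$ and $a_2(\alpha,\kappa_1,\kappa_2)\le 0$ at the maximum point, for every admissible pair $(\kappa_1,\kappa_2)$ with $\kappa_1>\kappa_2$, $\kappa_1\kappa_2>1$, and every $\alpha\in[1/4,1]$.

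The key observation is that the expression \eqref{4.7} shows $a_1$ to be a quadratic polynomial in $\alpha$ with leading coefficient $4\kappa_1\kappa_2^2(\kappa_1-\kappa_2)^2>0$. Hence $\alpha\mapsto a_i(\alpha,\kappa_1,\kappa_2)$ is strictly convex on $\mathbb{R}$ for each fixed $(\kappa_1,\kappa_2)$ in the convex set
\[
\mathcal{C}:=\{(\kappa_1,\kappa_2)\in\mathbb{R}^2 \mid \kappa_1\kappa_2>1,\ \kappa_1>\kappa_2\}.
\]
By convexity, it therefore suffices to verify the two endpoint bounds
\[
a_i(1/4,\kappa_1,\kappa_2)\le 0,\qquad a_i(1,\kappa_1,\kappa_2)\le 0,\qquad (\kappa_1,\kappa_2)\in\mathcal{C},\ i=1,2,
\]
and then $a_i(\alpha,\kappa_1,\kappa_2)\le 0$ for all $\alpha\in[1/4,1]$.

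To verify each endpoint bound, I would fix $\kappa_2>0$ and set $h_{i,\alpha}(x):=a_i(\alpha,x,\kappa_2)$, viewing it as a polynomial in the single variable $x=\kappa_1$ on the interval $x>\max\{\kappa_2,1/\kappa_2\}$ (which encodes both $\kappa_1>\kappa_2$ and $\kappa_1\kappa_2>1$). Splitting into the two cases $\kappa_2\ge 1$ and $0<\kappa_2\le 1$ to handle the relevant left endpoint, the standard procedure is to show first that the top-order derivative $h_{i,\alpha}^{(k)}$ has constant sign (negative), and then to check by induction downward that all lower derivatives, and finally $h_{i,\alpha}$ itself, are non-positive at the left endpoint $x=\kappa_2$ or $x=1/\kappa_2$. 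At $x=1/\kappa_2$ one typically finds the factor $(1-\kappa_2^2)^{2k}$ appearing, which supplies the required sign. This is precisely the scheme executed for $\alpha\in\{1/3,4\}$ in the proof of Theorem~\ref{theo-3.3}.

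The main obstacle is therefore entirely computational: verifying the sign of the iterated derivatives of $h_{i,1/4}$ and $h_{i,1}$ at the endpoints $\kappa_2$ and $1/\kappa_2$. There is no conceptual novelty compared with Theorem~\ref{theo-3.3}; the calculation is simply lengthy because $a_1$ in \eqref{4.7} is a polynomial of moderately high degree in $(\kappa_1,\kappa_2)$ and each check of $h^{(j)}(\kappa_2)$ or $h^{(j)}(1/\kappa_2)$ must be reorganized so that the resulting univariate polynomial in $\kappa_2$ is manifestly non-positive (e.g.\ factoring out $(\kappa_2^2-1)^m$ where possible, then checking positivity of the remaining factor). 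Once these endpoint estimates are in place, the monotonicity conclusion and hence the theorem follow immediately from the maximum principle applied to \eqref{4.6}.
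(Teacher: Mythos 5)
Your overall strategy matches the paper's: reduce to endpoint estimates at $\a=1/4$ and $\a=1$ via strict convexity of $a_i(\cdot,\k_1,\k_2)$ in $\a$, then verify non-positivity of $a_1,a_2$ on $\mathcal C$ at those two values. The paper's endpoint verifications are lighter than the uniform derivative-endpoint scheme you propose: at $\a=1$ the coefficients collapse to $a_1(1,\k_1,\k_2)=-4\k_2(\k_1\k_2-1)^2$ and $a_2(1,\k_1,\k_2)=-4\k_1(\k_1\k_2-1)^2$, manifestly non-positive with no derivative checks needed, and at $\a=1/4$ the paper handles $a_1$ by rewriting it as a quadratic form $-\tfrac14(3(\k_1-\k_2)+7\k_2^3)x^2-5\k_2^2xy-4\k_2y^2$ in $x=\k_1-\k_2$, $y=\k_2^2-1$ with negative discriminant, reserving the derivative-endpoint scheme only for $a_2(1/4,\cdot,\k_2)$. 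Your uniform scheme is more laborious but would also go through (each $a_i(\a,\cdot,\k_2)$ is a low-degree polynomial in $\k_1$ with negative leading coefficient), so the proposal is sound.
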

\begin{proof}
We need to show that the coefficients $a_i(\alpha,\k_1,\k_2)$ are non-positive at the spatial maximum point of $G$. Similar to the proof of Theorem \ref{theo-3.3}, we denote
$$
\mathcal{C}:=\{ (\k_1,\k_2)\in \R^2 ~|~ \k_1\k_2>1, \k_1>\k_2 \}.
$$
We assume $\k_1>\k_2$ without loss of generality. Then the expression \eqref{4.7} says that both the coefficients $a_i(\a,\k_1,\k_2)$, $i=1,2$ are strictly convex functions of $\a$ for each fixed $(\k_1,\k_2)\in \mathcal{C}$. We obviously have
\begin{align*}
a_1(1,\k_1,\k_2)=&~-4\k_2(\k_1\k_2-1)^2\leq 0,\\
a_2(1,\k_1,\k_2)=&~-4\k_1(\k_1\k_2-1)^2\leq 0.
\end{align*}
We claim that
$$
a_i(1/4,\k_1,\k_2) \leq 0,\qquad \forall~(\k_1,\k_2)\in \mathcal{C}.
$$
Therefore, $a_i(\a,\k_1,\k_2) \leq 0$ for all $(\k_1,\k_2)\in \mathcal{C}$ provided that $\a\in [1/4,1]$.

To verify the claim, we rewrite $a_1(1/4,\k_1,\k_2)$ as
\begin{align*}
a_1(1/4,\k_1,\k_2)=&-\frac{1}{4}(\k_1-\k_2)^2\left(3(\k_1-\k_2)+7\k_2^3 \right)\\
                  &-5\k_2^2(\k_2^2-1)(\k_1-\k_2)-4\k_2(\k_2^2-1)^2 \\
                  =&-\frac{1}{4}\left(3(\k_1-\k_2)+7\k_2^3 \right)  x^2 -5\k_2^2 xy -4\k_2 y^2,
\end{align*}
where $x=\k_1-\k_2$ and $y=\k_2^2-1$. For $(\k_1,\k_2)\in \mathcal{C}$, $a_1(1/4,\k_1,\k_2)$ is a strictly concave quadratic polynomial of $x,y$, with discriminant
$$
\D=25\k_2^4 -4\k_2\left(3(\k_1-\k_2)+7\k_2^3 \right)=-3\k_2^4-12\k_2(\k_1-\k_2)<0,
$$
which implies that $a_1(1/4,\k_1,\k_2)\leq 0$ for all $(\k_1,\k_2)\in \mathcal{C}$.

On the other hand, to show that $a_2(1/4,\k_1,\k_2)\leq 0$ for all $(\k_1,\k_2)\in \mathcal{C}$, we introduce the auxiliary function
\begin{align*}
h(x):=&a_2(1/4,x,\k_2)\\
=&\frac{1}{4} \biggl( -3 \k_2^3 + (-16 + 9 \k_2^2) x + 11 \k_2 x^2 + (15 - 7 \k_2^2) x^3 -
 6 \k_2 x^4 - 3 x^5\biggr)
\end{align*}
and denote $h^{(k)}(x)$ the $k$-th derivatives of $h(x)$. Then we have
\begin{align*}
h^{(2)}(x)=&-\frac{1}{2}\(-11\k_2-45 x +21\k_2^2+36\k_2 x^2+30x^3\) \\
=&-\frac{1}{2} \biggl( 11\k_2(x^2-1)+25x(\k_2 x-1)+20x(x^2-1)+10x^3+21\k_2^2 \biggr).
\end{align*}
Hence, we have $h^{(2)}(\k_1)\leq 0$ for all $\k_1>\max\{\k_2,1/\k_2\}\geq 1$.
\begin{enumerate}[(i)]
\item If $\k_2\geq \frac{1}{\k_2}$, then $\k_2 \geq 1$ and
\begin{align*}
h^{(1)}(\k_2)=&-(\k_2-1)(15\k_2^2-4)\leq 0;\\
h(\k_2)=&-4\k_2(\k_2-1)^2\leq 0;
\end{align*}
\item If $\k_2\leq \frac{1}{\k_2}$, then $0<\k_2 \leq 1$ and
\begin{align*}
h^{(1)}(1/\k_2)=&-\frac{3}{4\k_2^4}(1-\k_2^2)(5-2\k_2^2+3\k_2^4)\leq 0;\\
h(1/\k_2)=&-\frac{3}{4\k_2^5}(1-\k_2^2)^2(1-\k_2^2+\k_2^4)\leq 0;
\end{align*}
\end{enumerate}
In both cases, we have $h(\k_1)=a_2(1/4,\k_1,\k_2)\leq 0$ for any $(\k_1,\k_2)\in \mathcal{C}$.

Finally, by maximum principle we conclude that $\max_{M_t}G(x,t)$ is monotone non-increasing along the flow \eqref{4.1} with $\a \in [\frac{1}{4},1]$.
\end{proof}

\begin{cor}\label{s4:cor}
Let $M_t, t\in [0,T)$ be a smooth solution to the flow \eqref{4.1} with positive scalar curvature in $\H^3$, with $\a\in [1/2,1]$. Then there exists a constant $C>0$ depending only on $M_0$ and $\alpha$ such that
\begin{align}\label{s4:pinc}
0<\frac{1}{C}\leq \frac{\k_1}{\k_2} \leq C
\end{align}
on $M_t$ for all $t\in [0,T)$.
\end{cor}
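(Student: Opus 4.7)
The plan is to use the monotonicity of $\max_{M_t} G$ established in Theorem \ref{theo-4.3} to derive a uniform bound on $(\kappa_1-\kappa_2)^2/(\kappa_1\kappa_2)$, which is equivalent to the pinching estimate. Since $[1/2,1]\subset[1/4,1]$, Theorem \ref{theo-4.3} applies and yields
\begin{equation*}
G(x,t) = (\kappa_1\kappa_2-1)^{2\alpha-2}(\kappa_1-\kappa_2)^2 \leq C_1 := \max_{M_0} G
\end{equation*}
for all $t \in [0,T)$. Rearranging, $(\kappa_1-\kappa_2)^2 \leq C_1 (\kappa_1\kappa_2-1)^{2-2\alpha}$. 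Combined with the elementary identity
\begin{equation*}
\frac{\kappa_1}{\kappa_2}+\frac{\kappa_2}{\kappa_1}-2 = \frac{(\kappa_1-\kappa_2)^2}{\kappa_1\kappa_2},
\end{equation*}
this gives
\begin{equation*}
\frac{\kappa_1}{\kappa_2}+\frac{\kappa_2}{\kappa_1}-2 \leq C_1 \cdot \frac{(\kappa_1\kappa_2-1)^{2-2\alpha}}{\kappa_1\kappa_2}.
\end{equation*}

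The remaining step is to show that the right-hand side is uniformly bounded. By Proposition \ref{pro-4.1}, $y:=\kappa_1\kappa_2 > 1$ holds along the flow, so it suffices to bound $(y-1)^{2-2\alpha}/y$ for $y>1$ and $\alpha \in [1/2,1]$. Set $a:=2-2\alpha \in [0,1]$. For $y \in (1,2]$, $(y-1)^{a} \leq 1$ since $a \leq 1$ and $y-1 \leq 1$, so $(y-1)^{a}/y \leq 1$. For $y \geq 2$, $(y-1)^{a}/y \leq y^{a-1} = y^{1-2\alpha} \leq 1$ since $1-2\alpha \leq 0$. Thus $(y-1)^{2-2\alpha}/y \leq 1$ in both cases, yielding
\begin{equation*}
\frac{\kappa_1}{\kappa_2}+\frac{\kappa_2}{\kappa_1} \leq 2 + C_1
\end{equation*}
on $M_t$ for all $t \in [0,T)$. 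Writing $r := \kappa_1/\kappa_2 \geq 1$ (WLOG), the inequality $r + r^{-1} \leq 2+C_1$ confines $r$ to a bounded subinterval of $(0,\infty)$, which gives \eqref{s4:pinc} with $C$ depending only on $M_0$ and $\alpha$.

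The argument is parallel to Corollary \ref{cor-3.5}, and the restriction $\alpha \geq 1/2$ (within the wider monotonicity range $[1/4,1]$) enters precisely in the last step: it is exactly the condition that makes $y^{1-2\alpha}$ remain bounded as $\kappa_1\kappa_2 \to \infty$. I do not expect any essential obstacle beyond this elementary case analysis, since all the hard work, namely the monotonicity of the auxiliary quantity $G$, has already been carried out in Theorem \ref{theo-4.3}; note in particular that no lower bound on $\kappa_1\kappa_2-1$ is needed here (in contrast to the mean-curvature case of Corollary \ref{cor-3.5}), because the exponent $2-2\alpha$ is nonnegative on $[1/2,1]$.
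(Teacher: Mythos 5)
Your proof is correct, and in the final step you take a genuinely cleaner route than the paper. Both arguments start the same way: invoke Theorem \ref{theo-4.3} (valid since $[1/2,1]\subset[1/4,1]$) to get $G\le C_1:=\max_{M_0}G$, and write $\frac{\k_1}{\k_2}+\frac{\k_2}{\k_1}-2=\frac{(\k_1-\k_2)^2}{\k_1\k_2}$. The paper then estimates this by $C_1(\k_1\k_2-1)^{1-2\alpha}$ (after discarding the factor $\k_1\k_2$ in favor of $\k_1\k_2-1$) and needs the \emph{quantitative} lower bound $K-1\ge\min_{M_0}(K-1)$ from Proposition \ref{pro-4.1} to conclude, since $(\k_1\k_2-1)^{1-2\alpha}$ would blow up as $\k_1\k_2\to1^+$. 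You instead retain $\k_1\k_2$ in the denominator and observe by the elementary two-case analysis that $(y-1)^{2-2\alpha}/y\le1$ for all $y>1$ and $\alpha\in[1/2,1]$, so only the \emph{qualitative} invariant $\k_1\k_2>1$ is needed. This is a nice simplification: your version yields the explicit bound $\frac{\k_1}{\k_2}+\frac{\k_2}{\k_1}\le 2+C_1$ depending only on $\max_{M_0}G$, whereas the paper's constant also carries the dependence on $\min_{M_0}(K-1)$. (A small cosmetic point: in your Case 1 you write ``since $a\le1$ and $y-1\le1$''; the relevant facts are $a\ge0$ and $0<y-1\le1$, but the conclusion $(y-1)^a\le1$ is of course correct.)
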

\begin{proof}
As in the proof of Corollary \ref{cor-3.5},
\begin{align*}
\frac{\k_1}{\k_2}+\frac{\k_2}{\k_1}-2= &(\k_1\k_2-1)^{2\a-2}(\k_1-\k_2)^2\cdot \frac{1}{\k_1\k_2(\k_1\k_2-1)^{2\a-2}} \\
\leq & \max_{M_0}G(\cdot,0) \frac{1}{(\k_1\k_2-1)^{2\a-1}},
\end{align*}
which is bounded from above by Proposition \ref{pro-4.1} and $\alpha\geq 1/2$.  The pinching estimate \eqref{s4:pinc} follows immediately.
\end{proof}

\section{Flow in $\mathbb{H}^3$ by powers of Gauss curvature}\label{sec:5}
In this section, we study the flow of surfaces in $\mathbb{H}^3$ by powers of Gauss curvature, i.e.,
\begin{align}\label{5.1}
\left\{\begin{aligned}\frac{\partial}{\partial t} X(x,t)=&-K^\a(x,t) \nu(x,t),\qquad \alpha>0\\
X(x,0)=&X_0(x).
\end{aligned}\right.
\end{align}

\begin{pro}\label{pro-5.1}
Let $M_t$, $t\in [0,T)$ be a smooth solution to the flow \eqref{5.1} in $\H^3$. For any power $\a>0$, if $M_0$ has positive scalar curvature, then $M_t$ has positive scalar curvature for all $t\in [0,T)$. Moreover, we have the estimate
\begin{equation}\label{s5:K1}
K-1\geq \min_{M_0}(K-1)\left(1-(2\alpha+1)\min_{M_0}(K-1)^{\alpha+\frac 12}t\right)^{-\frac 2{2\alpha+1}}
\end{equation}
on $M_t$ for $t\in [0,T)$.
\end{pro}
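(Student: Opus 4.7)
The plan is to mirror the argument used for Proposition 3.1 (with $F=H^{\alpha}$) and Proposition 4.1 (with $F=(K-1)^{\alpha}$): derive an evolution equation for the Gauss curvature $K$ along the flow \eqref{5.1}, apply the parabolic maximum principle to conclude that $K>1$ is preserved, and then extract the quantitative bound \eqref{s5:K1} from an ODE comparison.

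To set up the evolution, I would start from \eqref{s2:speed} with $F=K^{\alpha}$. Since $\dot{F}^{ij}=\alpha K^{\alpha-1}\dot{K}^{ij}$, and since the identities $\dot{K}^{ij}(h^2)_{ij}=KH$ and $\dot{K}^{ij}g_{ij}=H$ follow from $\dot{g}^1=\kappa_2$, $\dot{g}^2=\kappa_1$ in principal coordinates, one obtains $F\dot{F}^{ij}((h^2)_{ij}-g_{ij})=\alpha K^{2\alpha-1}H(K-1)$, so \eqref{s2:speed} (with $c=-1$) reduces to
$$
\frac{\partial}{\partial t}K^{\alpha}=\dot{F}^{ij}\nabla_i\nabla_j K^{\alpha}+\alpha K^{2\alpha-1}H(K-1).
$$
Expanding $\nabla_i\nabla_jK^{\alpha}=\alpha K^{\alpha-1}\nabla_i\nabla_jK+\alpha(\alpha-1)K^{\alpha-2}\nabla_iK\nabla_jK$ and dividing through by $\alpha K^{\alpha-1}$ then gives
$$
\frac{\partial K}{\partial t}=\dot{F}^{ij}\nabla_i\nabla_jK+(\alpha-1)K^{-1}\dot{F}^{ij}\nabla_iK\nabla_jK+K^{\alpha}H(K-1).
$$

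At a spatial minimum of $K$, the second term on the right vanishes because $\nabla K=0$, and the Hessian of $K$ is positive semidefinite. The hypothesis $K>1>0$ forces $\kappa_1\kappa_2>0$ on the closed connected surface $M_t$, while at a point farthest from any interior reference point both principal curvatures are positive, so $\kappa_1,\kappa_2>0$ everywhere and $\dot{F}^{ij}$ is positive definite. The parabolic maximum principle therefore yields
$$
\frac{d}{dt}\min_{M_t}(K-1)\;\geq\;K^{\alpha}H(K-1),
$$
in particular showing that $K>1$ is preserved along \eqref{5.1}.

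For the quantitative bound, I would use the AM--GM inequality $H=\kappa_1+\kappa_2\geq 2\sqrt{K}$ together with $K>K-1>0$ and $\alpha>0$ to estimate
$$
K^{\alpha}H(K-1)\;\geq\;2K^{\alpha+\frac12}(K-1)\;\geq\;2(K-1)^{\alpha+\frac32},
$$
so that $m(t):=\min_{M_t}(K-1)$ satisfies the ODE inequality $m'(t)\geq 2m(t)^{\alpha+3/2}$. Integrating gives $m(t)^{-(\alpha+1/2)}\leq m(0)^{-(\alpha+1/2)}-(2\alpha+1)t$, which rearranges to \eqref{s5:K1}. I do not anticipate any serious technical obstacle: the algebra for the evolution equation is mechanical (and in fact cleaner than in the $H^{\alpha}$ case of Proposition 3.1 since the gradient term cleanly vanishes at the minimum of $K$), and the only subtle point is the justification of strict convexity $\kappa_1,\kappa_2>0$ from the assumption $K>1$, handled by the support-point argument above.
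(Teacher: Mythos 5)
Your proof is correct and follows essentially the same route as the paper: derive the evolution equation for $F=K^{\alpha}$ from \eqref{s2:speed}, reduce it (by either keeping $K^{\alpha}$ as the paper does or dividing by $\alpha K^{\alpha-1}$ as you do) to a parabolic inequality for $\min_{M_t}(K-1)$, apply the maximum principle, and integrate using $H\geq 2\sqrt{K}$ and $K>K-1$ to get the bound \eqref{s5:K1}. The only added value in your write-up is the explicit justification that $K>1$ together with the closedness of $M_t$ forces $\kappa_1,\kappa_2>0$ (hence $\dot{F}^{ij}$ positive definite), which the paper leaves implicit.
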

\begin{proof}
By the equation \eqref{s2:speed}, the speed function $F=K^{\alpha}$ of the flow \eqref{5.1} evolves by
\begin{align}\label{5.2}
\frac{\partial}{\partial t}K^{\alpha}=&~\alpha K^{\alpha-1}\dot{K}^{ij}\nabla_i\nabla_jK^{\alpha}+\alpha K^{2\alpha-1}\dot{K}^{ij}((h^2)_{ij}-g_{ij})\nonumber\\
=&~\alpha K^{\alpha-1}\dot{K}^{ij}\nabla_i\nabla_jK^{\alpha}+\alpha K^{2\alpha-1}H(K-1).
\end{align}
Since $\alpha>0$, the maximum principle applied to \eqref{5.2} implies that $K>1$ is preserved along the flow \eqref{5.1}. The proof of the estimate \eqref{s5:K1} is similar as in Proposition \ref{pro-4.1}.
\end{proof}

We define
$$
G(x,t):=\frac{(\k_1\k_2)^{2\a}(\k_1-\k_2)^2}{(\k_1\k_2-1)^2}
$$
on $M_t$. By Proposition \ref{pro-5.1}, the function $G$ is well-defined on $M_t$ for all $t\in [0,T)$.
\begin{theo}\label{theo-5.3}
Let $M_t$, $t\in [0,T)$ be a smooth solution of the flow \eqref{5.1} in $\H^3$ with $K>1$, where $\a\in [1/4,1]$. Then $\max_{M_t}G(x,t)$ is monotone non-increasing in time.
\end{theo}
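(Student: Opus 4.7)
The strategy mirrors that of Theorems~\ref{theo-3.3} and~\ref{theo-4.3}. Since $F=K^{\alpha}$ is a homogeneous function of the principal curvatures (of degree $2\alpha$), Proposition~\ref{pro-2.1} applies and the evolution equation of $G$ has no zero-order terms:
\begin{equation*}
\frac{\partial}{\partial t}G=\dot{F}^{ij}\nabla_i\nabla_jG+\bigl(\dot{G}^{ij}\ddot{F}^{kl,mn}-\dot{F}^{ij}\ddot{G}^{kl,mn}\bigr)\nabla_ih_{kl}\nabla_jh_{mn}.
\end{equation*}
So the task reduces entirely to checking that the gradient quadratic form is non-positive at a spatial maximum of $G$. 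By the general formula \eqref{2.19} (after the two gradient identities $\dot g^1\nabla_ih_{11}+\dot g^2\nabla_ih_{22}=0$ have been used to eliminate $\nabla_ih_{11}$ and $\nabla_ih_{22}$), this form is
\begin{equation*}
\mathcal{Z}_1\,T_1^2+\mathcal{Z}_2\,T_2^2,
\end{equation*}
and the job is to show the coefficients are $\leq 0$ for every $(\kappa_1,\kappa_2)$ with $\kappa_1\kappa_2>1$ and $\kappa_1>\kappa_2$.

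First I would substitute $f=(\kappa_1\kappa_2)^{\alpha}$ into the formulas \eqref{2.10}--\eqref{2.10-b} for $\beta,\gamma$ and into \eqref{2.19} for $\mathcal{Z}$, using
\begin{equation*}
\dot f^1=\alpha K^{\alpha}/\kappa_1,\quad \dot f^2=\alpha K^{\alpha}/\kappa_2,\quad \ddot f^{11}=\alpha(\alpha-1)K^{\alpha}/\kappa_1^2,\quad \ddot f^{22}=\alpha(\alpha-1)K^{\alpha}/\kappa_2^2,
\end{equation*}
and $\ddot f^{12}=\alpha^2K^{\alpha}/(\kappa_1\kappa_2)$. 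As in Lemma~\ref{lemma-evol} and Lemma~\ref{lem-4.2}, I would factor out a positive common factor depending on $K^{\alpha}$ and $(\kappa_1\kappa_2-1)$ so that the remaining polynomial coefficients $a_1(\alpha,\kappa_1,\kappa_2)$ and $a_2(\alpha,\kappa_1,\kappa_2)=a_1(\alpha,\kappa_2,\kappa_1)$ are quadratic in $\alpha$ with \emph{positive} leading coefficient (coming from the $\ddot f^{kl}$ terms multiplied by $\beta^2+\gamma^2$). Strict convexity in $\alpha$ then reduces the problem to checking the two endpoints.

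Second I would verify $a_i(\alpha,\kappa_1,\kappa_2)\leq 0$ at $\alpha=1$ and at $\alpha=1/4$ on the convex region $\mathcal{C}=\{\kappa_1\kappa_2>1,\ \kappa_1>\kappa_2\}$. For $\alpha=1$ the flow is the Gauss curvature flow and the analogue of the computation in Theorem~\ref{theo-4.3} should yield a clean manifestly non-positive expression (a factor of $(\kappa_1\kappa_2-1)^2$ times a negative quantity in $\kappa_i$). For the endpoint $\alpha=1/4$ I expect to mimic exactly the technique of Theorem~\ref{theo-4.3}: for $a_1$ write it as a quadratic form in the variables $x=\kappa_1-\kappa_2$ and $y=\kappa_2^2-1$ (or an analogous pairing), and verify via its discriminant that it is concave and non-positive; for $a_2$ regard it as a one-variable polynomial $h(\kappa_1)$ of $\kappa_1$ for fixed $\kappa_2>0$, compute enough derivatives to show $h^{(k)}\leq 0$ at the base point $\kappa_1=\max\{\kappa_2,1/\kappa_2\}$, and split into the two cases $\kappa_2\geq 1$ and $\kappa_2\leq 1$ as in the earlier proofs. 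Once both endpoints are handled, convexity in $\alpha$ yields $a_i(\alpha,\cdot,\cdot)\leq 0$ throughout $[1/4,1]$ and the parabolic maximum principle closes the argument.

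The main obstacle will be the $\alpha=1/4$ endpoint: the polynomials $a_1,a_2$ are large expressions in $\kappa_1,\kappa_2$, and finding the right re-grouping (as a quadratic form in auxiliary variables for $a_1$, and as a derivative chain for $a_2$) is where the actual algebra lives. All other steps are structural and follow the established pattern of \S\ref{sec:3}--\S\ref{sec:4}.
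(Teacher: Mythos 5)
Your proposal is structurally identical to the paper's proof: invoke Proposition~\ref{pro-2.1} to kill the zero-order terms, reduce to the sign of the gradient coefficients $a_1,a_2$, observe they are strictly convex quadratics in $\alpha$ (leading coefficient $4\kappa_2(\kappa_1-\kappa_2)^2(\kappa_1\kappa_2-1)^2>0$), check the endpoints $\alpha=1/4$ and $\alpha=1$, and close with the maximum principle. This is exactly what the paper does. One place your expectations diverge from what actually happens: you predict that $\alpha=1$ (Gauss curvature flow) yields a ``clean manifestly non-positive expression'' as in the $(K-1)^\alpha$ case of Theorem~\ref{theo-4.3}, and that the $\alpha=1/4$ endpoint of $a_1$ can be handled by a quadratic-form-in-$(x,y)$ discriminant argument. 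In fact for $F=K^\alpha$ neither endpoint factors so cleanly, and the paper uses the iterated-derivative argument (showing $g_{i,\alpha}^{(k)}(\kappa_1)\le 0$ starting from a high-order derivative and descending, with the case split $\kappa_2\gtrless 1$ at the base point $\max\{\kappa_2,1/\kappa_2\}$) at \emph{both} $\alpha=1$ and $\alpha=1/4$, for both $a_1$ and $a_2$. These are tactical differences in the endpoint verifications only; your blueprint would lead to the same proof once you work the algebra and discover which re-grouping actually succeeds.
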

\begin{proof}
The evolution of $G$ along the flow \eqref{5.1} satisfies
\begin{align}\label{5.7}
\frac{\partial}{\partial t}G= \alpha K^{\alpha-1}\dot{K}^{ij}\nabla_i\nabla_jG+2\a\frac{(\k_1\k_2)^{5\a-2}}{(\k_1\k_2-1)^2}(a_1 T_1^2+a_2 T_2^2)
\end{align}
at the spatial maximum point of $G$. Here $T_1$, $T_2$ are defined as \eqref{2.11}, and the coefficients $a_1$, $a_2$ are given by
\begin{align*}
a_1(\a,\k_1,\k_2):=&4\k_2 (\k_1-\k_2)^2(\k_1\k_2-1)^2 \cdot \a^2 \nonumber\\
                   &+(\k_1-\k_2)(\k_1\k_2-1)\left[\k_1^2(1-5\k_2^2) - 2 \k_1(\k_2-\k_2^3) + (5\k_2^2-\k_2^4)\right]\cdot \a \nonumber\\
                   &+(\k_2^2-1)\left[\k_1^4 \k_2 +\k_1^3(1-3\k_2^2) + \k_1^2 (\k_2-\k_2^3) +  \k_1 (3\k_2^2-\k_2^4) -\k_2^3\right],
\end{align*}
and $a_2(\a,\k_1,\k_2):=a_1(\a,\k_2,\k_1)$. This follows from Proposition \ref{pro-2.1} and the formula \eqref{2.19}.

To prove Theorem \ref{theo-5.3}, we need to show $a_i$ are non-positive at the spatial maximum point of $G$. Both the coefficients $a_i(\a,\k_1,\k_2)$, $i=1,2$ are strictly convex functions of $\a$ in the cone $\mathcal{C}=\{ (\k_1,\k_2)\in \R^2 ~|~ \k_1\k_2>1, \k_1>\k_2 \}$.
We claim that for all $(\k_1,\k_2)\in \mathcal{C}$,
$$
a_i(1/4,\k_1,\k_2) \leq 0, \quad a_i(1,\k_1,\k_2) \leq 0.
$$
Then, for all $(\k_1,\k_2)\in \mathcal{C}$, $a_i(\a,\k_1,\k_2) \leq 0$ provided that $\a\in [1/4,1]$.

To verify the claim, we denote $g_{i,\a}(x):=a_i(\a,x,\k_2)$, where $(\k_1,\k_2)\in \mathcal{C}$. It suffices to show that
$$
g_{i,1/4}(\k_1)\leq 0, \quad g_{i,1}(\k_1)\leq 0.
$$
Firstly, we show that $g_{i,1}(\k_1)\leq 0$, $i=1,2$. We have
\begin{align*}
g_{1,1}^{(2)}(x)
                  =&-12 \left[(\k_2^2+1)(x-\k_2)+2\k_2^3(\k_2 x-1) \right],
\end{align*}
and
\begin{align*}
g_{2,1}^{(3)}(x)
                =&-12 \left[ 9(x^2-\k_2^2)+5(\k_2 x-1)+x^2 + 8 \k_2^3 x +7 \k_2 x  \right].
\end{align*}
Hence, we have $g_{1,1}^{(2)}(\k_1)\leq 0$ and $g_{2,1}^{(3)}(\k_1)\leq 0$ for all $\k_1>\max\{\k_2,1/\k_2\}\geq 1$.
\begin{enumerate}[(i)]
\item If $\k_2\geq \frac{1}{\k_2}$, then $\k_2 \geq 1$ and
\begin{align*}
g_{1,1}^{(1)}(\k_2)=&-12\k_2^2(\k^2_2-1)^2 \leq 0;\\
g_{1,1}(\k_2)=&-4\k_2^3(\k^2_2-1)^2\leq 0;
\end{align*}
and
\begin{align*}
g_{2,1}^{(2)}(\k_2)=&-8\k_2(-3+\k_2^2+6\k_2^4) \leq 0;\\
g_{2,1}^{(1)}(\k_2)=&-16\k_2^4(\k_2^2-1)\leq 0;\\
g_{2,1}(\k_2)=&-4\k_2^3(\k_2^2-1)^2\leq 0;
\end{align*}
\item If $\k_2\leq \frac{1}{\k_2}$, then $0<\k_2 \leq 1$ and
\begin{align*}
g_{1,1}^{(1)}(1/\k_2)=&-\frac{6}{\k_2^2}(\k_2^2-1)^2(1+\k_2^2)\leq 0;\\
g_{1,1}(1/\k_2)=&-\frac{2}{\k_2^3}(\k_2^2-1)^2(1+\k_2^4)\leq 0;
\end{align*}
and
\begin{align*}
g_{2,1}^{(2)}(1/\k_2)=&-\frac{4}{\k_2^3}(1+\k_2^2)(2-\k_2^2)(5-\k_2^2)\leq 0;\\
g_{2,1}^{(1)}(1/\k_2)=&-\frac{2}{\k_2^4}(1-\k_2^2)(5+2\k_2^2+\k_2^4)\leq 0;\\
g_{2,1}(1/\k_2)=&-\frac{2}{\k_2^5}(1-\k_2^2)^2(1+\k_2^4)\leq 0;
\end{align*}
\end{enumerate}
In both cases, we have $g_{i,1}(\k_1)\leq 0$.

Now we show that $g_{i,1/4}(\k_1)\leq 0$. We have
\begin{align*}
g_{1,1/4}^{(3)}(x)
                  =&-\frac{3}{2}\left[12\k_2(x-\k_2)+(2\k_2^2-1)^2+3\k_2^4+4 \right] \leq 0,
\end{align*}
and
\begin{align*}
g_{2,1/4}^{(3)}(x)=-3\left[ -5+14\k_2^3 x+25 x^2+6\k_2 x(-4+5x^2)+6\k_2^2(-1+5x^2) \right]\leq 0,
\end{align*}
Hence, we have $g_{1,1/4}^{(3)}(\k_1)\leq 0$ and $g_{2,1/4}^{(3)}(\k_1)\leq 0$ for all $\k_1>\max\{\k_2,1/\k_2\}\geq 1$.
\begin{enumerate}[(i)]
\item If $\k_2\geq \frac{1}{\k_2}$, then $\k_2 \geq 1$ and
\begin{align*}
g_{1,1/4}^{(2)}(\k_2)=&-\frac{3}{2}\k_2 (\k_2^2-1)(9\k_2^2-5)\leq 0;\\
g_{1,1/4}^{(1)}(\k_2)=&-9\k_2^2(\k_2^2-1)^2 \leq 0;\\
g_{1,1/4}(\k_2)=&-4\k_2^3(\k_2^2-1)^2\leq 0;
\end{align*}
and
\begin{align*}
g_{2,1/4}^{(2)}(\k_2)=&-\frac{1}{2}\k_2(-9-74\k_2^2+147\k_2^4)\leq 0;\\
g_{2,1/4}^{(1)}(\k_2)=&-\k_2^2(\k_2^2-1)(-3+19\k_2^2) \leq 0;\\
g_{2,1/4}(\k_2)=&-4\k_2^3(\k_2^2-1)^2\leq 0;
\end{align*}
\item If $\k_2\leq \frac{1}{\k_2}$, then $0<\k_2 \leq 1$ and
\begin{align*}
g_{1,1/4}^{(2)}(1/\k_2)=&-\frac{3}{2\k_2}(1-\k_2^2)(11-5\k_2^2-2\k_2^4)\leq 0;\\
g_{1,1/4}^{(1)}(1/\k_2)=&-\frac{3}{4\k_2^2}(1-\k_2^2)^2(3-2\k_2+\k_2^2)(3+2\k_2+\k_2^2)\leq 0;\\
g_{1,1/4}(1/\k_2)=&-\frac{2}{\k_2^3}(1-\k_2^2)^2(1+\k_2^4)\leq 0;
\end{align*}
and
\begin{align*}
g_{2,1/4}^{(2)}(1/\k_2)=&-\frac{1}{2\k_2^3}\left(95-42\k_2^2+27\k_2^4-16\k_2^6\right)\leq 0;\\
g_{2,1/4}^{(1)}(1/\k_2)=&-\frac{1}{4\k_2^4}(1-\k_2^2)(43-5\k_2^2+29\k_2^4-3\k_2^6)\leq 0;\\
g_{2,1/4}(1/\k_2)=&-\frac{2}{\k_2^5}(1-\k_2^2)^2(1+\k_2^4)\leq 0;
\end{align*}
\end{enumerate}
In both cases, we have $g_{i,1/4}(\k_1)\leq 0$.

Thus the coefficients $a_1,a_2$ in \eqref{5.7} are non-positive at the spatial maximum point of $G$.  Applying the maximum principle, we conclude that $\max_{M_t}G(x,t)$ is monotone non-increasing along the flow \eqref{5.1} with $\a \in [\frac{1}{4},1]$, which completes the proof.
\end{proof}

Applying similar argument as in Corollary \ref{s4:cor}, we have
\begin{cor}
Let $M_t, t\in [0,T)$ be a smooth solution of the flow \eqref{5.1} with $K>1$ in $\H^3$, where $\a\in [1/2,1]$. There exists a constant $C>0$ depending only on $M_0$ and $\alpha$ such that
\begin{align*}
0<\frac{1}{C}\leq \frac{\k_1}{\k_2} \leq C
\end{align*}
on $M_t$ for all $t\in [0,T)$.
\end{cor}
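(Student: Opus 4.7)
The plan is to mimic exactly the argument used for Corollary~\ref{cor-3.5} and Corollary~\ref{s4:cor}: combine the monotonicity of $G$ established in Theorem~\ref{theo-5.3} with the lower bound $K>1$ from Proposition~\ref{pro-5.1} to convert a pointwise upper bound on $G$ into a pointwise two-sided bound on $\k_1/\k_2$. Since the hypotheses include $\alpha\in[1/2,1]\subset[1/4,1]$, Theorem~\ref{theo-5.3} immediately gives $G(x,t)\le C_0:=\max_{M_0}G(\cdot,0)$ for every $(x,t)\in M\times[0,T)$.

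To extract the pinching from this, I would first use the elementary identity
\[
\frac{\k_1}{\k_2}+\frac{\k_2}{\k_1}-2=\frac{(\k_1-\k_2)^2}{\k_1\k_2}.
\]
Solving the definition of $G=\frac{(\k_1\k_2)^{2\a}(\k_1-\k_2)^2}{(\k_1\k_2-1)^2}$ for $(\k_1-\k_2)^2$ and substituting gives
\[
\frac{\k_1}{\k_2}+\frac{\k_2}{\k_1}-2 \;=\; G\cdot\frac{(\k_1\k_2-1)^2}{(\k_1\k_2)^{2\a+1}}.
\]
Since $K=\k_1\k_2>1$ on $M_t$ by Proposition~\ref{pro-5.1}, we have $(\k_1\k_2-1)^2<(\k_1\k_2)^2$, whence the right-hand side is bounded above by $G\cdot(\k_1\k_2)^{1-2\a}$. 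The assumption $\a\ge 1/2$ guarantees $2\a-1\ge 0$, so $(\k_1\k_2)^{1-2\a}\le 1$ (again using $\k_1\k_2>1$), and we conclude
\[
\frac{\k_1}{\k_2}+\frac{\k_2}{\k_1}-2 \;\le\; G \;\le\; C_0.
\]
The final step is the standard observation that the function $t\mapsto t+t^{-1}$ on $t>0$ attains its minimum $2$ uniquely at $t=1$ and is proper; consequently an upper bound on $\k_1/\k_2+\k_2/\k_1$ is equivalent to a constant $C=C(C_0)>0$ with $C^{-1}\le\k_1/\k_2\le C$, as required.

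There is no real obstacle here: the work has been done in Theorem~\ref{theo-5.3} and Proposition~\ref{pro-5.1}, and what remains is purely algebraic. The only point worth emphasising in the write-up is \emph{why} we must restrict from $\a\in[1/4,1]$ (the range in which $G$ is monotone) to $\a\in[1/2,1]$: if $\a<1/2$ the factor $(\k_1\k_2)^{1-2\a}$ is unbounded as $K\to\infty$, and a uniform bound on $G$ would no longer control the pinching ratio without an additional a priori upper bound on $K$. The choice $\a\ge 1/2$ is exactly what makes the exponent of $\k_1\k_2$ non-positive so that $K>1$ can be exploited as an inequality in the right direction.
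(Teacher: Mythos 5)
Your proof is correct and follows essentially the same route the paper (implicitly) takes: combine the monotonicity of $G$ from Theorem~\ref{theo-5.3} (valid since $[1/2,1]\subset[1/4,1]$) with the lower bound $K>1$ from Proposition~\ref{pro-5.1}, and do the algebra on $\frac{\k_1}{\k_2}+\frac{\k_2}{\k_1}-2=\frac{(\k_1-\k_2)^2}{\k_1\k_2}=G\cdot\frac{(\k_1\k_2-1)^2}{(\k_1\k_2)^{2\a+1}}$. A minor observation: your bound uses only $\k_1\k_2>1$ (via $(\k_1\k_2-1)^2<(\k_1\k_2)^2$ and then $(\k_1\k_2)^{1-2\a}\le 1$), so you do not even need the quantitative lower bound on $K-1$ from \eqref{s5:K1}, which is a small simplification over a literal transplant of the Corollary~\ref{s4:cor} argument.
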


\section{Contracting flows in the sphere}\label{sec:S}
In this section, we study the flows for strictly convex surfaces in the sphere $\mathbb{S}^3$. We will prove the curvature pinching estimates along the flow.
\subsection{Flow by powers of mean curvature}
In this subsection, we study the flow of closed surfaces in the sphere by powers of mean curvature
\begin{align}\label{7.1}
\left\{\begin{aligned}\frac{\partial}{\partial t} X(x,t)=&-H^\a(x,t) \nu(x,t),\quad \alpha>0\\
X(x,0)=&X_0(x).
\end{aligned}\right.
\end{align}
We first show that the strict convexity of $M_t$ is preserved along the flow \eqref{7.1}.

\begin{pro}\label{pro-3.4}
Let $M_t$, $t\in [0,T)$ be a smooth solution to the flow \eqref{7.1} in $\mathbb{S}^3$. For any power $\a>0$, if $M_0$ is strictly convex, then $M_t$ is strictly convex for all $t\in [0,T)$. Moreover, the mean curvature satisfies
\begin{equation}\label{s3:H-S1}
  H(x,t)\geq \min_{M_0}H(\cdot,0)\left(1-\frac{\alpha+1}n(\min_{M_0}H(\cdot,0))^{\alpha+1}t\right)^{-\frac 1{\alpha+1}}
\end{equation}
for all $t\in [0,T)$. This implies an upper bound for the maximum existence time $T$:
\begin{equation*}
  T\leq \frac n{\alpha+1}(\min_{M_0}H(\cdot,0))^{-(\alpha+1)}.
\end{equation*}
\end{pro}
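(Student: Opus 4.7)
The proof splits naturally into two independent maximum principle arguments together with an ODE integration.

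\textbf{Mean curvature lower bound.} I would first trace the evolution equation \eqref{s2:evl-h} with $F = H^\alpha$ and $c = 1$. Since $\dot{f}^1 = \dot{f}^2 = \alpha H^{\alpha-1}$ and every entry of $\ddot{f}^{ij}$ equals $\alpha(\alpha-1)H^{\alpha-2}$ (so that in \eqref{2.F-2nd} the off-diagonal contribution vanishes because $\dot{f}^1 - \dot{f}^2 = 0$), a direct computation using $\dot{F}^{kl}g_{kl} = 2\alpha H^{\alpha-1}$ and the sphere curvature terms yields
\begin{equation*}
\frac{\partial H}{\partial t} = \alpha H^{\alpha-1}\Delta H + \alpha(\alpha-1)H^{\alpha-2}|\nabla H|^2 + H^\alpha\bigl(|A|^2 + 2\bigr).
\end{equation*}
At the spatial minimum of $H$ the gradient-squared term vanishes and $\Delta H \geq 0$, while $|A|^2 \geq H^2/n = H^2/2$ by Cauchy--Schwarz, so
\begin{equation*}
\frac{d}{dt}\min_{M_t} H \;\geq\; \tfrac{1}{n}(\min_{M_t} H)^{\alpha+2} + 2(\min_{M_t} H)^\alpha.
\end{equation*}
In particular $\min H$ is non-decreasing, and comparison with the ODE $\phi' = \phi^{\alpha+2}/n$ yields the lower bound \eqref{s3:H-S1}; the comparison ODE blows up at precisely $t = \tfrac{n}{\alpha+1}(\min_{M_0} H)^{-(\alpha+1)}$, giving the stated upper bound for the maximal existence time $T$.

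\textbf{Preservation of $K>0$.} Applying \eqref{2.2} with $G = K$, $F = H^\alpha$, $c=1$, the derivatives $\dot{g}^1 = \kappa_2$, $\dot{g}^2 = \kappa_1$, $\ddot{g}^{12} = 1$ together with $|A|^2 = H^2 - 2K$ reduce the zero-order term after straightforward algebra to
\begin{equation*}
Q_0 = (K+1)H^{\alpha-1}\bigl((1+\alpha)H^2 - 4\alpha K\bigr),
\end{equation*}
which is non-negative by the AM--GM inequality $H^2 \geq 4K$, giving $(1+\alpha)H^2 - 4\alpha K \geq 4K \geq 0$. At the spatial minimum of $K$ the gradient condition $\kappa_2 \nabla_i h_{11} + \kappa_1 \nabla_i h_{22} = 0$ substituted into the general gradient expression \eqref{2.7} produces exactly the same formula as \eqref{s3:2}, namely
\begin{equation*}
Q_1 = \alpha H^{\alpha-2}\bigl((\alpha-1)(\kappa_1-\kappa_2)^2 + 2H^2\bigr)\Bigl(\tfrac{\kappa_2}{\kappa_1^2}(\nabla_1 h_{11})^2 + \tfrac{\kappa_1}{\kappa_2^2}(\nabla_2 h_{22})^2\Bigr),
\end{equation*}
and for $\alpha \geq 0$ the bracket is bounded below by $2H^2 - (\kappa_1-\kappa_2)^2 = \kappa_1^2 + 6\kappa_1\kappa_2 + \kappa_2^2 \geq 8K > 0$, so $Q_1 \geq 0$. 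The parabolic maximum principle then gives $\min_{M_t} K \geq \min_{M_0} K > 0$ for all $t \in [0,T)$.

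\textbf{Conclusion and main obstacle.} With both $H > 0$ and $K > 0$ preserved, the principal curvatures $\kappa_1, \kappa_2$ solve $x^2 - Hx + K = 0$ with positive sum and product, so both are positive and $M_t$ remains strictly convex. I expect the algebraic reduction of $Q_0$ to be the main calculational obstacle: the four terms arising from \eqref{2.2} must collapse into the sign-definite product above, and this relies crucially on the two-dimensional identity $|A|^2 = H^2 - 2K$. There is also a mild logical subtlety in that the non-negativity of $Q_0$ uses $H > 0$ through the factor $H^{\alpha-1}$ when $\alpha < 1$, but this is supplied by the independent mean curvature estimate above, so no circular reasoning arises.
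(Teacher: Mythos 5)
Your proof is correct and follows essentially the same route as the paper: the zero-order term $Q_0$ collapses via $|A|^2 = H^2 - 2K$ to the sign-definite factorization $(K+1)H^{\alpha-1}\bigl((1+\alpha)H^2-4\alpha K\bigr)$, which is identical (after substituting $H^2-4K = (\kappa_1-\kappa_2)^2$) to the paper's expression $(K+1)\bigl(H^2+\alpha(\kappa_1-\kappa_2)^2\bigr)H^{\alpha-1}$, the gradient term is handled by the same formula \eqref{s3:2}, and the mean-curvature lower bound comes from the same ODE comparison $\phi'=\phi^{\alpha+2}/n$.
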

\begin{proof}
To show the strict convexity of $M_t$ for $t>0$, we prove that the Gauss curvature $K>0$ is preserved along the flow \eqref{7.1}. Let $G=K$ in \eqref{2.5}. Similar as in Proposition \ref{pro-3.1}, the zero order term $Q_0$ of the evolution equation of $K$ satisfies
\begin{align*}
Q_0=&(1-\a)KH^{\a+1}+2\a K (|A|^2-2) H^{\a-1}+(\a+1)H^{\a+1}\\
   =&(K+1)(H^2+(\k_1-\k_2)^2\a)H^{\a-1} \geq 0.
\end{align*}
The gradient term $Q_1$ is the same as in the hyperbolic case. Hence, by \eqref{s3:2} we have $Q_1\geq 0$ at the spatial minimum point of $K$. By the maximum principle, we have $\min_{M_t}K \geq \min_{M_0}K>0$, and hence $M_t$ is strictly convex for $t\in [0,T)$.

By the equation \eqref{s2:speed}, the mean curvature $H$ evolves along the flow \eqref{7.1} by
\begin{align*}
  \frac{\partial}{\partial t}H=&\Delta H^{\alpha}+H^{\alpha}(|A|^2+2).
\end{align*}
This implies that
\begin{align*}
  \frac{d}{d t}\min_{M_t}H\geq &H^{\alpha+2}/n.
\end{align*}
Then the estimate \eqref{s3:H-S1} follows from the maximum principle.
\end{proof}


As we mentioned in \S \ref{sec:1}, we consider the function
\begin{equation}\label{s3:G-S}
  G(x,t)=g(\kappa)=\frac{(\k_1-\k_2)^2(\k_1+\k_2)^{2\a}}{(\k_1\k_2)^2}
\end{equation}
to derive the curvature pinching estimate of the flow in the sphere $\mathbb{S}^3$. This is the same one used in \cite{Schulze2006} in the Euclidean case.
\begin{theo}\label{theo-3.4}
Let $M_t$, $t\in [0,T)$ be a smooth strictly convex solution of the flow \eqref{7.1} in $\mathbb{S}^3$, where $\a\in [1,5]$. Then $\max_{M_t}G(x,t)$ is monotone non-increasing in time.
\end{theo}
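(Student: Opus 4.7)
The approach is to apply the parabolic maximum principle to the evolution equation of $G$. Specializing the general formula \eqref{2.2} to $F=H^\a$ and $c=1$ gives
\[
\frac{\partial}{\partial t}G = \a H^{\a-1}\Delta G + Q_1 + Q_0,
\]
where $Q_1$ is the gradient quadratic form (the terms containing $\ddot F$ and $\ddot G$) and $Q_0$ is the zero-order term. I would split $Q_0 = Q_0^{\mathrm{Eucl}} + Q_0^{\mathrm{sph}}$, where the first summand is the expression that would appear in $\R^3$ and
\[
Q_0^{\mathrm{sph}} = (F + \dot F^{kl}h_{kl})\dot G^{ij}g_{ij} - \dot F^{kl}g_{kl}\dot G^{ij}h_{ij}
\]
isolates the contribution from the positive ambient sectional curvature.

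At a spatial maximum of $G$ with $G>0$ (otherwise there is nothing to prove), I may assume $\k_1 > \k_2 > 0$: strict convexity is preserved by Proposition \ref{pro-3.4}, and non-umbilicity follows from $G>0$. Since the test function \eqref{s3:G-S} is identical to the one used by Schulze and Schn\"urer in $\R^3$, the gradient conditions $\nabla_i G=0$ reduce $Q_1$ to exactly the same quadratic form in $\nabla_1 h_{11}$ and $\nabla_2 h_{22}$ as in the Euclidean setting. I would therefore invoke directly the main gradient estimate of \cite{Schulze2006}, which yields $Q_1 + Q_0^{\mathrm{Eucl}} \leq 0$ at every new spatial maximum precisely when $\a \in [1,5]$.

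It then remains to show $Q_0^{\mathrm{sph}} \leq 0$. Using the Euler identities $\dot F^{kl}h_{kl} = \a F$ and $\dot F^{kl}g_{kl} = 2\a H^{\a-1}$, together with $\dot G^{ij}h_{ij} = (2\a-2)G$ and a short computation of $\dot g^1 + \dot g^2$ from \eqref{s3:G-S}, a direct calculation gives
\[
Q_0^{\mathrm{sph}} = \frac{2 H^{3\a-1}(\k_1-\k_2)^2}{(\k_1\k_2)^3}\bigl[4\a\,\k_1\k_2 - (1+\a)H^2\bigr].
\]
The identity $H^2 = (\k_1-\k_2)^2 + 4\k_1\k_2$ rewrites the bracket as $-(1+\a)(\k_1-\k_2)^2 - 4\k_1\k_2$, which is strictly negative thanks to $\k_1,\k_2>0$, for \emph{any} $\a>0$. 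This is the ``good sign from positive ambient curvature'' anticipated in the outline. Combining the three estimates, $\partial_t G \leq \a H^{\a-1}\Delta G$ at every new spatial maximum, and the parabolic maximum principle finishes the proof.

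The main obstacle is not the Euclidean gradient calculation (imported verbatim from \cite{Schulze2006}) but the bookkeeping required to isolate $Q_0^{\mathrm{sph}}$ cleanly from $Q_0^{\mathrm{Eucl}}$ and to confirm that the sphere correction does not narrow the admissible window $\a \in [1,5]$ coming from the gradient analysis. The computation above shows that $Q_0^{\mathrm{sph}}$ in fact contributes a strictly favorable term, so the restriction $\a \in [1,5]$ is inherited entirely from \cite{Schulze2006}.
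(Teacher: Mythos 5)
Your proposal is correct and follows essentially the same route as the paper: compute the zero-order term $Q_0$ explicitly using the Euler identities and show it is non-positive, then invoke \cite[Lemma A.2]{Schulze2006} for the gradient term $Q_1\leq 0$, with the window $\a\in[1,5]$ entirely inherited from the Euclidean gradient analysis. The only cosmetic difference is that you split $Q_0=Q_0^{\mathrm{Eucl}}+Q_0^{\mathrm{sph}}$ whereas the paper computes $Q_0$ in one stroke; since $Q_0^{\mathrm{Eucl}}$ vanishes identically for this test function (that is exactly why Schulze--Schn\"urer chose it), your formula for $Q_0^{\mathrm{sph}}$ coincides with the paper's expression $Q_0=-2H^{\a-1}K^{-1}G\bigl(H^2+\a(\k_1-\k_2)^2\bigr)$, and both yield the same favorable sign for every $\a>0$.
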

\begin{proof}
We apply \eqref{2.2} to calculate the evolution equation of $G$ along the flow \eqref{7.1}.  We have
\begin{equation}\label{7.4}
\frac{\partial}{\partial t}G=\dot{F}^{ij}\nabla_i \nabla_j G + Q_1+Q_0,
\end{equation}
at the spatial maximum point of $G$, where $Q_1,Q_0$ denote the gradient term and zero order term in the evolution of $G$. By the definition \eqref{s3:G-S}, the derivatives of $G$ are given by
\begin{align*}
\dot{g}^1=&~2\k_1^{-3}\k_2^{-2}(\k_1-\k_2)H^{2\a-1}(\k_2(\k_1+\k_2)+\k_1(\k_1-\k_2)\a),\\
\dot{g}^2=&~2\k_1^{-2}\k_2^{-3}(\k_2-\k_1)H^{2\a-1}(\k_1(\k_1+\k_2)+\k_2(\k_1-\k_2)\a).
\end{align*}
Then a direct calculation gives that
\begin{equation*}
\begin{split}
Q_0
=&-2 H^{\a-1}K^{-1} G(H^2+\a(\k_1-\k_2)^2) \leq 0.
\end{split}
\end{equation*}
Since the function $G$ is the same one used in \cite{Schulze2006} for the flow in $\mathbb{R}^3$ by powers of mean curvature, the gradient term $Q_1$ in \eqref{7.4} would be the same as in the Euclidean case. Therefore by the argument as in \cite[Lemma A.2]{Schulze2006}, $Q_1\leq 0$ at the spatial maximum point of $G$ provided that the power $\a \in [1, 5]$, and the conclusion of Theorem \ref{theo-3.4} follows.
\end{proof}

\begin{cor}\label{cor3.6}
Let $M_t, t\in [0,T)$ be a smooth strictly convex solution of $H^\a$-flow in $\mathbb{S}^3$, where $\a\in [1,5]$. There exists a constant $C>0$ depending only on the initial surface $M_0$ and $\alpha$ such that
\begin{align}\label{s3:pinc-2}
0<\frac{1}{C}\leq \frac{\k_1}{\k_2} \leq C,
\end{align}
on $M_t$ for all $t\in [0,T)$.
\end{cor}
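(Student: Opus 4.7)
The plan is to mimic the strategy used in Corollary \ref{cor-3.5} for the hyperbolic case, combining the monotonicity of the spatial maximum of $G$ established in Theorem \ref{theo-3.4} with the positive lower bound for $H$ obtained in Proposition \ref{pro-3.4}. Since $M_t$ is strictly convex we may order $\kappa_1 \geq \kappa_2 > 0$, so that $\kappa_1/\kappa_2 \geq 1$ automatically; only the upper bound on the ratio requires argument.

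The first step is algebraic: rewrite $G$ in a form that isolates the quantity $\kappa_1/\kappa_2 + \kappa_2/\kappa_1 - 2$. A direct manipulation gives
\begin{equation*}
G(x,t) \;=\; \frac{(\kappa_1-\kappa_2)^2}{\kappa_1\kappa_2}\cdot\frac{H^{2\alpha}}{\kappa_1\kappa_2} \;=\; \Bigl(\frac{\kappa_1}{\kappa_2}+\frac{\kappa_2}{\kappa_1}-2\Bigr)\cdot\frac{H^{2\alpha}}{K},
\end{equation*}
so that
\begin{equation*}
\frac{\kappa_1}{\kappa_2}+\frac{\kappa_2}{\kappa_1}-2 \;=\; G\cdot\frac{K}{H^{2\alpha}}.
\end{equation*}
By Theorem \ref{theo-3.4}, $G \leq C_1 := \max_{M_0}G(\cdot,0)$ along the flow. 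Using the arithmetic–geometric means inequality $K\leq H^2/4$, this yields
\begin{equation*}
\frac{\kappa_1}{\kappa_2}+\frac{\kappa_2}{\kappa_1}-2 \;\leq\; \frac{C_1}{4}\,H^{2-2\alpha}.
\end{equation*}

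The second step is to control the factor $H^{2-2\alpha}$. The hypothesis $\alpha\geq 1$ makes the exponent non-positive, so we need a positive lower bound on $H$; this is precisely the content of Proposition \ref{pro-3.4}, which gives $H\geq H_0 := \min_{M_0}H(\cdot,0) > 0$ (in fact $\min_{M_t}H$ is non-decreasing). Thus $H^{2-2\alpha}\leq H_0^{2-2\alpha}$, and combining the two estimates produces
\begin{equation*}
\frac{\kappa_1}{\kappa_2}+\frac{\kappa_2}{\kappa_1}-2 \;\leq\; \frac{C_1}{4\,H_0^{2(\alpha-1)}} \;=:\; M,
\end{equation*}
a constant depending only on $M_0$ and $\alpha$. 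Since the function $x \mapsto x + x^{-1}$ is strictly increasing on $[1,\infty)$ and tends to infinity, the bound $\kappa_1/\kappa_2 + \kappa_2/\kappa_1 \leq M+2$ immediately forces $\kappa_1/\kappa_2 \leq C$ for some $C=C(M_0,\alpha)$, completing the estimate \eqref{s3:pinc-2}.

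There is no real obstacle here: both ingredients (the monotonicity of $G$ and the lower bound on $H$) are already established. The only mild point to check is that the algebraic rearrangement above together with $\alpha\geq 1$ is what makes the scheme close; had $\alpha$ been allowed below $1$, the factor $H^{2-2\alpha}$ would blow up as $H$ grows and a different argument would be needed. Thus the corollary is essentially a formal consequence of Theorem \ref{theo-3.4} and Proposition \ref{pro-3.4}.
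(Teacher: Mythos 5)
Your proof is correct and follows essentially the same route as the paper's: both rewrite $\kappa_1/\kappa_2+\kappa_2/\kappa_1-2=G\cdot K/H^{2\alpha}$, apply the monotonicity of $\max_{M_t}G$ from Theorem \ref{theo-3.4}, use AM--GM to replace $K$ by $H^2/4$, and then invoke the lower bound on $H$ from Proposition \ref{pro-3.4} together with $\alpha\geq 1$ to conclude. Your write-up is just a bit more explicit about the intermediate inequalities; there is no substantive difference.
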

\begin{proof}
\begin{align*}
\frac{\k_1}{\k_2}+ \frac{\k_2}{\k_1}-2=\frac{(\k_1-\k_2)^2}{\k_1\k_2}=\frac{(\k_1-\k_2)^2H^{2\a}}{(\k_1\k_2)^{2}} \cdot \frac{\k_1\k_2}{H^{2\a}}
\leq  &\frac{\max_{M_0}G(\cdot,0)}{4H^{2\a-2}},
\end{align*}
which is bounded from above by \eqref{s3:H-S1} and $\a\geq 1$. Hence, the estimate \eqref{s3:pinc-2} follows immediately.
\end{proof}

\subsection{Flow by powers of Gauss curvature}
In this subsection, we study the flow for surfaces in the sphere by powers of Gauss curvature, i.e.,
\begin{align}\label{6.1}
\left\{\begin{aligned}\frac{\partial}{\partial t} X(x,t)=&-K^\a(x,t) \nu(x,t),\qquad \alpha>0\\
X(x,0)=&X_0(x).
\end{aligned}\right.
\end{align}

\begin{pro}\label{pro-5.4}
Let $M_t$, $t\in [0,T)$ be a smooth solution to the flow \eqref{6.1} in $\mathbb{S}^3$. For any power $\a>0$, if $M_0$ is strictly convex, then $M_t$ is strictly convex for all $t\in [0,T)$. Moreover,we have the estimate
\begin{equation}\label{s5:K-S1}
K\geq \min_{M_0}K\left(1-(2\alpha+1)\min_{M_0}K^{\alpha+\frac 12}t\right)^{-\frac 2{2\alpha+1}}
\end{equation}
on $M_t$ for $t\in [0,T)$.
\end{pro}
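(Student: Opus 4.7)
The approach is parallel to that of Proposition \ref{pro-5.1} for the hyperbolic case, with the only essential change being the sign of the ambient curvature term in \eqref{s2:speed}. First I would apply the general evolution equation \eqref{s2:speed} to $F=K^{\a}$ with $c=1$ to obtain
\[
\frac{\partial}{\partial t}K^{\a}=\a K^{\a-1}\dot{K}^{ij}\nabla_i\nabla_j K^{\a}+\a K^{2\a-1}\dot{K}^{ij}\bigl((h^2)_{ij}+g_{ij}\bigr).
\]
In a local frame diagonalizing $h$ with eigenvalues $\k_1,\k_2$, one has $\dot{K}^{11}=\k_2$ and $\dot{K}^{22}=\k_1$, so $\dot{K}^{ij}((h^2)_{ij}+g_{ij})=\k_2(\k_1^2+1)+\k_1(\k_2^2+1)=H(K+1)$. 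Dividing by $\a K^{\a-1}$, this is equivalent to the scalar equation
\[
\frac{\partial K}{\partial t}=\dot{K}^{ij}\nabla_i\nabla_j K^{\a}+K^{\a}H(K+1).
\]

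Next I would apply the parabolic maximum principle. Since strict convexity of $M_0$ gives $K>0$ at $t=0$ and the zero-order term $K^{\a}H(K+1)$ is nonnegative wherever $K>0$, the spatial minimum of $K$ is non-decreasing in time. Consequently $K>0$ is preserved along the flow and $M_t$ remains strictly convex for every $t\in[0,T)$. Note that, in contrast to the hyperbolic setting, no a priori restriction on the initial scalar curvature is needed here: the positive ambient curvature $c=+1$ already delivers a favorable sign in the zero-order term, so the maximum principle is available from the outset.

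For the quantitative estimate \eqref{s5:K-S1}, I would combine the maximum-principle inequality
\[
\frac{d}{dt}\min_{M_t}K\geq K^{\a}H(K+1)
\]
with the arithmetic--geometric mean bound $H\geq 2\sqrt{K}$ and the trivial bound $K+1\geq K$ to obtain the ODE differential inequality
\[
\frac{d}{dt}\min_{M_t}K\geq 2\bigl(\min_{M_t}K\bigr)^{\a+3/2}.
\]
Separating variables and integrating in $t$, exactly as in the last step of Proposition \ref{pro-5.1}, then yields the stated bound \eqref{s5:K-S1}. No serious obstacle is expected; the only point requiring care is the sign bookkeeping of the ambient curvature contribution in \eqref{s2:speed}, which flips relative to the hyperbolic computation of Proposition \ref{pro-5.1}.
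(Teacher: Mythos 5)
Your argument matches the paper's proof of Proposition~\ref{pro-5.4}: both apply \eqref{s2:speed} with $c=1$ to $F=K^{\a}$, compute $\dot{K}^{ij}((h^2)_{ij}+g_{ij})=H(K+1)$, preserve $K>0$ by the maximum principle, and derive $\frac{d}{dt}\min_{M_t}K\geq 2K^{\a+3/2}$ via $H\geq 2\sqrt{K}$ and $K+1>K$ before integrating. The reasoning is correct and essentially identical to the paper's (which itself defers to the template of Proposition~\ref{pro-5.1}).
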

\begin{proof}
This follows from a similar argument as in Proposition \ref{pro-5.1}. By the equation \eqref{s2:speed}, the speed function $F=K^{\alpha}$ of the flow \eqref{6.1} evolves by
\begin{align}\label{s5:K-S2}
\frac{\partial}{\partial t}K^{\alpha}=&~\alpha K^{\alpha-1}\dot{K}^{ij}\nabla_i\nabla_jK^{\alpha}+\alpha K^{2\alpha-1}H(K+1).
\end{align}
Since $\alpha>0$, $K>0$ is preserved by applying the maximum principle to \eqref{s5:K-S2}.  The equation \eqref{s5:K-S2} also implies that
\begin{equation*}
  \frac d{dt}\min_{M_t}K\geq 2K^{\alpha+\frac 32}.
\end{equation*}
Then the estimate \eqref{s5:K-S1} follows by integrating the above inequality.
\end{proof}

We consider the following function
\begin{equation}\label{s5:G-2}
  G(x,t)=g(\kappa)=\frac{(\k_1-\k_2)^2}{(\k_1\k_2)^{2-2\a}}.
\end{equation}
\begin{theo}\label{theo-5.4}
Let $M_t$, $t\in [0,T)$ be a smooth strictly convex solution of the flow \eqref{6.1} in $\mathbb{S}^3$, where $\a\in [1/4,1]$. Then $\max_{M_t}G(x,t)$ is monotone non-increasing in time.
\end{theo}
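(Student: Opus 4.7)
The plan is to follow the same template as the proofs of Theorems \ref{theo-3.3}, \ref{theo-4.3}, and \ref{theo-5.3}. Apply the general evolution formula \eqref{2.2} with $c = 1$ and $F = K^\alpha$ to the test function $G$ in \eqref{s5:G-2} to obtain
\begin{equation*}
\tfrac{\partial}{\partial t} G \;=\; \dot{F}^{ij} \nabla_i \nabla_j G + Q_1 + Q_0,
\end{equation*}
and then show that at a spatial maximum of $G$ both the zero-order term $Q_0$ and the gradient term $Q_1$ are non-positive whenever $\alpha \in [1/4, 1]$; the parabolic maximum principle then delivers the claimed monotonicity.

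For $Q_0$, I would explicitly compute $\dot{g}^i$ and $\ddot{g}^{ij}$ from $g(\kappa) = (\kappa_1 - \kappa_2)^2 (\kappa_1\kappa_2)^{2\alpha - 2}$ and substitute into \eqref{2.2}. The computation splits $Q_0$ into the Euclidean zero-order terms plus the spherical contribution $c\bigl((F + \dot{F}^{kl} h_{kl})\dot{G}^{ij} g_{ij} - \dot{F}^{kl} g_{kl} \dot{G}^{ij} h_{ij}\bigr)$. As emphasized in the introduction, the positive sectional curvature of $\mathbb{S}^3$ endows the zero-order terms with a good sign, and using that $K^\alpha$ is homogeneous of degree $2\alpha$ while the factor $(\kappa_1 - \kappa_2)^2/(\kappa_1\kappa_2)^2$ is homogeneous of degree zero should collapse the Euler-type identities $\sum_i \dot{f}^i \kappa_i = 2\alpha f$ and $\sum_i \dot{g}^i \kappa_i = 4\alpha g$ to give $Q_0 \leq 0$ on any strictly convex surface.

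For the gradient term, since the Codazzi relations in a space form do not depend on the ambient curvature, the general framework of \S \ref{sec:2-3} carries over verbatim. Inserting $F = K^\alpha$ and $G$ from \eqref{s5:G-2} into the general formula \eqref{2.19}, and using the gradient conditions \eqref{2.8} to eliminate $\nabla_i h_{22}$ in favor of $\nabla_i h_{11}$, I would obtain
\begin{equation*}
Q_1 \;=\; 2\alpha\, C(\kappa)\,\bigl(a_1 T_1^2 + a_2 T_2^2\bigr),
\end{equation*}
with $T_1, T_2$ as in \eqref{2.11}, $C(\kappa) > 0$, and explicit coefficients $a_1(\alpha, \kappa_1, \kappa_2) = a_2(\alpha, \kappa_2, \kappa_1)$ that are quadratic polynomials in $\alpha$ of moderate degree in $(\kappa_1, \kappa_2)$ and strictly convex in $\alpha$.

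The main obstacle, exactly as in Theorems \ref{theo-4.3} and \ref{theo-5.3}, is the endpoint verification: by strict convexity in $\alpha$ it suffices to show $a_i(1/4, \kappa_1, \kappa_2) \leq 0$ and $a_i(1, \kappa_1, \kappa_2) \leq 0$ for all $(\kappa_1, \kappa_2)$ in the cone $\{\kappa_1 > \kappa_2 > 0\}$, which then propagates non-positivity to the whole interval $[1/4,1]$. Since $a_i$ vanishes on the umbilic diagonal $\kappa_1 = \kappa_2$ (where $G \equiv 0$), each $a_i(\alpha_*, x, \kappa_2)$ with $\alpha_* \in \{1/4, 1\}$ is divisible by $(x - \kappa_2)$, leaving a low-degree polynomial in $x = \kappa_1$ at fixed $\kappa_2$. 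Mimicking the endpoint analysis of \S \ref{sec:3}--\S \ref{sec:5}, I would iteratively differentiate in $x$ until the top coefficient is manifestly non-positive and then evaluate successive derivatives at the boundary point $x = \kappa_2$ of the cone, reducing the two polynomial inequalities to sign checks on explicit monomials in $\kappa_2$; this is a bookkeeping task rather than a conceptual one, and once it is executed the maximum principle applied to the evolution equation of $G$ completes the proof.
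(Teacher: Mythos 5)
Your overall plan---apply \eqref{2.2} with $c=1$ and $F=K^\alpha$ to the test function $G$ in \eqref{s5:G-2}, show that $Q_0$ and $Q_1$ are non-positive at a spatial maximum of $G$ for $\alpha\in[1/4,1]$, and invoke the maximum principle---is precisely the paper's strategy, and your reading of the general framework of \S\ref{sec:2-3} (in particular that \eqref{2.19} carries over unchanged because Codazzi holds in any space form) is correct. The zero-order computation indeed gives $Q_0=-2HK^{-1}fg\leq0$ as you anticipate, though note the small slip that $G$ in \eqref{s5:G-2} is homogeneous of degree $4\alpha-2$, so the Euler identity is $\sum_i\dot g^i\kappa_i=(4\alpha-2)g$, not $4\alpha g$.

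Two more substantive points concern your treatment of $Q_1$. First, the auxiliary claim that each $a_i$ vanishes on the umbilic diagonal $\kappa_1=\kappa_2$ and is therefore divisible by $(\kappa_1-\kappa_2)$ is false: the vanishing of $G$ at an umbilic point does not force the gradient-term coefficients to vanish there. Concretely, inserting $F=K^\alpha$ and $G$ into \eqref{2.19} with $\beta=\kappa_2\bigl(\kappa_2+(\kappa_1-\kappa_2)\alpha\bigr)$ and $\gamma=\kappa_1\bigl(-\kappa_1+(\kappa_1-\kappa_2)\alpha\bigr)$, the paper obtains
\begin{equation*}
\mathcal{Z}=2\alpha K^{3\alpha-1}\kappa_1^{-2}\Bigl(\kappa_2^3(\alpha-1)+\kappa_1^3(\alpha-1)(4\alpha-1)+\kappa_1\kappa_2^2(\alpha-1)(4\alpha+1)+\kappa_1^2\kappa_2(-3+7\alpha-8\alpha^2)\Bigr),
\end{equation*}
and the bracket evaluates to $-4\kappa^3\neq0$ when $\kappa_1=\kappa_2=\kappa$; the same happens in the hyperbolic analogues (e.g.\ $a_1(1,\kappa,\kappa)=-4\kappa(\kappa^2-1)^2$ from \eqref{4.7}). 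This error does not derail the rest of your plan, since the iterated-derivatives-at-$x=\kappa_2$ check you describe next works fine without any factoring, but you would be chasing a cancellation that isn't there. Second, your proposal is considerably heavier than the paper's actual argument: in the displayed $\mathcal{Z}$ each of the four terms inside the bracket is already non-positive whenever $1/4\leq\alpha\leq1$ (the last because $-8\alpha^2+7\alpha-3$ has negative discriminant), so the paper concludes $\mathcal{Z}\leq0$ by inspection with no convexity-in-$\alpha$ reduction, endpoint checks, or iterated differentiation. Your endpoint scheme would also succeed---the $\alpha^2$-coefficient of the bracket is $4\kappa_1(\kappa_1-\kappa_2)^2>0$ and the bracket is $\leq0$ at $\alpha=1/4$ and $\alpha=1$---it is simply superfluous here.
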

\begin{proof}
Since $F=f(\k)=(\k_1\k_2)^{\a}$ and $G$ is defined as in \eqref{s5:G-2}, the derivatives of $F$ and $G$ are given by
\begin{equation}\label{6.4}
\begin{split}
\dot{f}^1=&\a \k_1^{-1}(\k_1\k_2)^{\a}, \quad  \dot{f}^2=\a \k_2^{-1}(\k_1\k_2)^{\a},\\
\dot{g}^1=&\frac{2}{\k_1^3\k_2^2}(\k_1-\k_2)(\k_1\k_2)^{2\a}(\k_2+(\k_1-\k_2)\a),\\
\dot{g}^2=&\frac{2}{\k_1^2\k_2^3}(\k_2-\k_1)(\k_1\k_2)^{2\a}(\k_1+(\k_2-\k_1)\a).
\end{split}
\end{equation}
We apply \eqref{2.2} to calculate the evolution equation of $G$ along the flow \eqref{6.1}, i.e., at the spatial maximum point of $G$ we have:
\begin{equation}\label{6.3}
\frac{\partial}{\partial t}G=\dot{F}^{ij}\nabla_i \nabla_j G + Q_1+Q_0,
\end{equation}
where $Q_1,Q_0$ denotes the gradient term and zero order term.  By \eqref{2.2} and \eqref{6.4}, the zero-order term of \eqref{6.3} for $G$ satisfies
\begin{equation}\label{6.6}
\begin{split}
Q_0
=&-2HK^{-1}fg \leq 0.
\end{split}
\end{equation}
Next we apply \eqref{2.13} to calculate the gradient term $Q_1$ at the maximum point $(p,t)$.
Note that the derivation of \eqref{2.13} didn't use any ambient curvatures, and so can be applied here. Taking further derivatives to the equations \eqref{6.4}, we have
\begin{equation}\label{6.7}
\begin{split}
\ddot{f}^{11}=&\a(\a-1)\k_1^{-2}K^\a, \quad \ddot{f}^{22}=\a(\a-1)\k_2^{-2}K^\a, \quad \ddot{f}^{12}=\a^2 K^{\a-1},\\
\ddot{g}^{11}=&2K^{2\a-2}\k_1^{-2}\left[\k_2(-2\k_1+3\k_2)-(\k_1-5\k_2)(\k_1-\k_2)\a+2(\k_1-\k_2)^2\a^2\right],\\
\ddot{g}^{22}=&2K^{2\a-2}\k_2^{-2}\left[\k_1(-2\k_2+3\k_1)-(\k_2-5\k_1)(\k_2-\k_1)\a+2(\k_1-\k_2)^2\a^2\right],\\
\ddot{g}^{12}=&2K^{2\a-3}\left[ -\k_1\k_2-2(\k_1-\k_2)^2 \a+2(\k_1-\k_2)^2\a^2\right].
\end{split}
\end{equation}
We only calculate the coefficient (denoted by $\mathcal{Z}$) in front of $T_1^2$ in the equation \eqref{2.13}.
Using \eqref{6.4} and \eqref{6.7}, we obtain the expression for $\mathcal{Z}$ as follows:
\begin{align*}
\mathcal{Z}=&2\a K^{3\a-1}\k_1^{-2}\biggl(\k_2^3(\a-1)+\k_1^3(\a-1)(4\a-1) \\
& \quad +\k_1\k_2^2(\a-1)(4\a+1)+\k_1^2\k_2(-3+7\a-8\a^2) \biggr).
\end{align*}
For $\a \in [\frac{1}{4},1]$, it is easy to check that $\mathcal{Z} \leq 0$. This means that the gradient term $Q_1$ in the evolution equation \eqref{6.3} is non-positive at the spatial maximum point of $G$, and then the conclusion of Theorem \ref{theo-5.4} follows.
\end{proof}
\begin{rem}
The gradient term in \eqref{6.3} is same as the Euclidean case in \cite{Andrews-Chen2012}. The computation in \cite{Andrews-Chen2012} is carried out using the Gauss map parametrization of the flow: The flow \eqref{6.1} in Euclidean space $\mathbb{R}^3$ is equivalent to a scalar parabolic equation on the sphere $\mathbb{S}^2$ for the support function of the evolving surfaces. Here we proved our estimate using the calculation on the evolving surfaces directly.
\end{rem}

\begin{cor}
Let $M_t, t\in [0,T)$ be a smooth strictly convex solution of the flow \eqref{6.1} in $\mathbb{S}^3$, where $\a\in [1/2,1]$. There exists a constant $C>0$ depending only on $M_0$ and $\alpha$ such that
\begin{align}\label{s5:pinc-2}
0<\frac{1}{C}\leq \frac{\k_1}{\k_2} \leq C
\end{align}
on $M_t$ for all $t\in [0,T)$.
\end{cor}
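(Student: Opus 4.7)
The proof plan follows the template established in Corollary \ref{cor-3.5}, Corollary \ref{s4:cor}, and Corollary \ref{cor3.6}. The basic idea is to extract a bound on the ratio $\kappa_1/\kappa_2$ from the monotonicity of $G$ established in Theorem \ref{theo-5.4}, combined with a uniform positive lower bound on the Gauss curvature $K$ from Proposition \ref{pro-5.4}.

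First I would observe the algebraic identity
\begin{equation*}
\frac{\kappa_1}{\kappa_2}+\frac{\kappa_2}{\kappa_1}-2=\frac{(\kappa_1-\kappa_2)^2}{\kappa_1\kappa_2}=\frac{(\kappa_1-\kappa_2)^2}{(\kappa_1\kappa_2)^{2-2\alpha}}\cdot(\kappa_1\kappa_2)^{1-2\alpha}=G\cdot K^{1-2\alpha},
\end{equation*}
so the pinching ratio is controlled as soon as both $G$ and the factor $K^{1-2\alpha}$ are uniformly bounded from above on $M_t$.

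Next I would apply Theorem \ref{theo-5.4}, which (since $[1/2,1]\subset [1/4,1]$) gives the uniform bound $G(x,t)\leq \max_{M_0}G(\cdot,0)$ for all $t\in [0,T)$. For the remaining factor, the key point is the sign assumption $\alpha\geq 1/2$, which ensures $1-2\alpha\leq 0$; hence $K^{1-2\alpha}$ is bounded above by a lower bound on $K$ taken to a non-positive power. Such a lower bound is provided by Proposition \ref{pro-5.4}: applying the maximum principle to \eqref{s5:K-S2} shows that $\min_{M_t}K$ is non-decreasing in time, so $K(x,t)\geq \min_{M_0}K(\cdot,0)>0$ throughout the flow (one can also read this off directly from the estimate \eqref{s5:K-S1}). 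Combining these two facts yields
\begin{equation*}
\frac{\kappa_1}{\kappa_2}+\frac{\kappa_2}{\kappa_1}\leq 2+\max_{M_0}G(\cdot,0)\cdot\left(\min_{M_0}K(\cdot,0)\right)^{1-2\alpha}=:C_0,
\end{equation*}
which is a constant depending only on $M_0$ and $\alpha$.

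Finally, setting $\rho=\kappa_1/\kappa_2\geq 1$, the inequality $\rho+\rho^{-1}\leq C_0$ forces $\rho$ to lie in a bounded closed interval $[1,C]$ with $C$ depending only on $C_0$, which together with strict convexity of $M_t$ gives the two-sided estimate \eqref{s5:pinc-2}. There is no real obstacle here—the only subtlety is tracking where the restriction $\alpha\geq 1/2$ is used: it is precisely what turns the factor $K^{1-2\alpha}$ from potentially unbounded (when $\alpha<1/2$, $K$ could blow up) into a quantity controlled by the preserved lower bound on $K$.
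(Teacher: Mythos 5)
Your proposal is correct and follows essentially the same route as the paper: you rewrite $\kappa_1/\kappa_2+\kappa_2/\kappa_1-2$ as $G\cdot K^{1-2\alpha}$, bound $G$ by Theorem \ref{theo-5.4}, and use the preserved lower bound on $K$ from Proposition \ref{pro-5.4} (equivalently \eqref{s5:K-S1}) together with $\alpha\geq 1/2$ to make $K^{1-2\alpha}$ bounded. This is exactly the paper's argument, just spelled out in more detail.
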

\begin{proof}
As before, we have
\begin{align*}
\frac{\k_1}{\k_2}+ \frac{\k_2}{\k_1}-2=\frac{(\k_1-\k_2)^2}{(\k_1\k_2)^{2-2\a}} \cdot \frac{1}{(\k_1\k_2)^{2\a-1}}\leq ~\max_{M_0}G(\cdot,0)\cdot \frac{1}{(\k_1\k_2)^{2\a-1}},
\end{align*}
which is bounded from above by the estimate \eqref{s5:K-S1} and $\alpha\geq 1/2$. Then the estimate \eqref{s5:pinc-2} follows.
\end{proof}

\section{Convergence}\label{sec:6}
In this section, we discuss the convergence of the solution to a point and of the rescaled solution to a sphere. We only give the details for the flow in $\mathbb{H}^3$ by powers of mean curvature, since the proof is similar for the remaining flows.
\subsection{Contraction to a point}
\begin{pro}\label{s6:lem1}
The evolving surfaces $M_t$ of the flow \eqref{3.1} remain smooth until they contract to a point as $t\to T$.
\end{pro}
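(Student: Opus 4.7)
The plan is to follow the standard Tso--Andrews--Gerhardt machinery, adapted to the hyperbolic setting and using the curvature pinching from Corollary \ref{cor-3.5}. First I would exploit the pinching estimate $\k_1/\k_2 \leq C$ together with an argument of Gerhardt (as in \cite[\S 6]{Ger15}) to show that there exist $t_0 \in [0,T)$ and a constant $C_0$ such that, for every $t \in [t_0,T)$, the ratio $\rho_+(t)/\rho_-(t)$ of the outer and inner radii of the geodesic ball enclosing/enclosed by $M_t$ is bounded by $C_0$. This gives the correct geometric set-up: until the final time, the surfaces $M_t$ look like approximately round geodesic spheres, and in particular each $M_t$ encloses a geodesic ball centered at some interior point $p_t$ whose radius is a definite fraction of the diameter.

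Next, working in the geodesic polar chart at $p_{t_0}$ (shifting to a later base point if necessary) and denoting by $u$ the support-type function measuring the distance to $p_{t_0}$, I would apply Tso's maximum principle trick to the auxiliary quantity $Q = H^\a/(u-\e)$ with $\e>0$ smaller than the current inner radius. A direct computation using \eqref{s2:speed} and the evolution equation for the hyperbolic distance function $u$ shows that $Q$ satisfies a parabolic inequality of the form $\partial_t Q \leq \a H^{\a-1}\Delta Q + \textrm{(lower order)}$, whose maximum-principle analysis yields $H^\a \leq C/(u-\e)$ as long as $u-\e > 0$. Combining this upper bound on $H$ with the lower bound $H \geq 2\sqrt{K} > 2$ (which is uniform by Proposition \ref{pro-3.1}) and with the pinching $\k_1/\k_2 \leq C$, I obtain two-sided bounds on the principal curvatures, i.e.\ a uniform $C^2$-estimate on the immersion on any time subinterval on which the inner radius stays bounded below.

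Given uniform $C^2$-bounds and the uniform ellipticity of $\a H^{\a-1}$ in this curvature range, I would then invoke Andrews' derivative estimate \cite{Andrews2004} (which provides H\"older continuity of the second fundamental form in the required sense) followed by parabolic Schauder theory applied to a graphical representation of $M_t$ over a fixed sphere, to bootstrap to uniform $C^{k,\alpha}$-bounds for all $k$. This gives smooth extendability of the flow past any time $t^*<T$ at which the inner radius is positive, so the only way the flow can cease to exist smoothly before $T$ is by the inner radius collapsing. Finally, since $T$ is finite by \eqref{s3:T} and the enclosed volume $\int_0^t\int_{M_s}H^\a\,d\mu_s\,ds$ is bounded below by a uniform multiple of the initial volume minus the volume at time $t$, one concludes $\rho_-(t)\to 0$ as $t\to T$; combined with $\rho_+/\rho_- \leq C_0$ this forces $M_t$ to shrink to a single point $p\in\H^3$, and the flow remains smooth up to (but not including) the extinction time.

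The main technical obstacle will be the Tso-type estimate in $\H^3$: the evolution of the hyperbolic distance function contains extra curvature terms (cosh/sinh factors) that produce sign-unfavourable zero-order contributions in the parabolic inequality for $Q$, and the coefficient $\a H^{\a-1}$ for $\a>1$ makes the operator's structure more delicate than the mean curvature case treated in \cite{Andrews-Chen2017}. One has to choose the cut-off parameter $\e$ and the base point $p_{t_0}$ carefully, using the pinched geometry and the lower bound $H\geq 2$, to absorb these hyperbolic correction terms; the bound will degenerate as $t\to T$, which is acceptable because we only need it on $[t_0,t^*]$ for every $t^*<T$.
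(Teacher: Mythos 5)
Your outline matches the paper's proof step by step: pinching plus Gerhardt's argument to control $\rho_+/\rho_-$, a Tso-type estimate to bound $H$ while the inner radius is positive, the lower bound $H\geq 2\sqrt{K}>2$ together with pinching to get two-sided curvature bounds, Andrews' H\"older estimate and Schauder theory for higher regularity, and extendability forcing $\rho_-(t)\to 0$ as $t\to T$. The one implementation detail worth flagging: the paper does not run Tso's trick on $H^\a/(u-\e)$ with the radial function $u$, but on $\varphi=H^\a/(\chi-\e\sinh\rho)$ where $\chi=\sinh u\,\langle\partial_r,\nu\rangle$ is the genuine hyperbolic support function; it is precisely this choice that makes the zero-order terms in the parabolic inequality \eqref{s6:var} come out with the right sign and absorb the ``hyperbolic correction terms'' you correctly identify as the technical obstacle. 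With $u$ in the denominator you would have to fight the extra $\cosh/\sinh$ factors by hand; with $\chi$ the evolution equation closes cleanly and a pure maximum-principle argument suffices.
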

\proof
Let $\rho_+(t)$ and $\rho_-(t)$ be the outer radius and inner radius of the domain $\Omega_t$ enclosed by $M_t$, defined by
\begin{align*}
  \rho_+(t) =& \inf\{\rho: ~\Omega_t\subset B_{\rho}(p)~\mathrm{for~ some} ~p\in \mathbb{H}^3\} \\
\rho_-(t) =& \sup\{\rho: ~B_{\rho}(p)\subset \Omega_t~\mathrm{for~ some} ~p\in \mathbb{H}^3\}.
\end{align*}
By the pinching estimate \eqref{s3:pinc-1}, we can apply a similar argument in \cite[\S 6]{Ger15} (see also \cite[\S 3]{AHL19}) to show that
\begin{equation}\label{s6:pinc-1}
  \rho_+(t)\leq C\rho_-(t),\qquad \mathrm{for} ~t\in [t_0,T),
\end{equation}
where $t_0$ is sufficiently close to $T$.

The technique of Tso \cite{Tso1985} can be used to show that the mean curvature remains bounded as long as the flow encloses a non-vanishing volume: Assume that there exits a geodesic ball $B_{\rho}(x_0)\subset \Omega_t$ for $t\in [0,t_1]$, where $t_1\in [t_0,T)$. Since $M_t$ is strictly convex, we can write $M_t=\mathrm{graph}~ u(\cdot,t)$ as graphs in polar coordinates centered at $x_0$. Then $u\geq \rho$ for all $t\in [0,t_1]$. By the comparison principle, the later surface is contained in the earlier one, then we have an upper bound on $u\leq 2\rho_+(0)$ depending only on $M_0$. Denote by $\partial_r$ the gradient vector at $x\in M_t$ along the geodesic from ${x_0}$ to $x$. The support function of $M_t$  with respect to $x_0$ is defined by $\chi(x,t)=\sinh u(x,t)\langle \partial_r,\nu\rangle$. Due to the strict convexity of $M_t$ and $\rho\leq u\leq 2\rho_+(0)$,  we have
$\chi\geq  \sinh \rho$ for all $t\in [0,t_1]$, and the function
\begin{equation*}
  \varphi~=~\frac{H^{\alpha}}{\chi-\frac 12 \sinh \rho}
\end{equation*}
is well defined on $M_t$ for all $t\in [0,t_1]$. Recall that by \eqref{s2:speed} the mean curvature satisfies
\begin{align}\label{s6:H-evl-1}
\frac{\partial}{\partial t}H^{\alpha}=~\alpha H^{\alpha-1} \D H^{\alpha}+\alpha H^{2\alpha-1}(|A|^2-2),
\end{align}
and the support function satisfies (see \cite[\S 4]{AW18})
\begin{equation*}
  \frac{\partial}{\partial t}\chi=\alpha H^{\alpha-1}\Delta \chi+\alpha H^{\alpha-1}|A|^2\chi-(1+\alpha)H^{\alpha}\cosh u(x).
\end{equation*}
The function $\varphi$ satisfies the evolution equation
\begin{align}\label{s6:var}
  \frac{\partial}{\partial t}\varphi= & \alpha H^{\alpha-1}\left(\Delta \varphi+\frac 2{\chi-\frac 12 \sinh \rho}g^{ij}\nabla_i\chi\nabla_j\varphi\right) \nonumber\\
   & \quad +\left((1+\alpha)\cosh u(x)-\frac 12 \sinh \rho \alpha \frac{|A|^2}H\right)\varphi^2-2\alpha H^{\alpha-1}\varphi \nonumber\\
   \leq &\alpha H^{\alpha-1}\left(\Delta \varphi+\frac 2{\chi-\frac 12 \sinh \rho}g^{ij}\nabla_i\chi\nabla_j\varphi\right) \nonumber\\
   & \quad +\left((1+\alpha)\cosh u(x)-\frac {\alpha }2(\frac 12 \sinh \rho)^{\frac{\alpha+1}{\alpha}} \varphi^{1/{\alpha}}\right)\varphi^2,
\end{align}
where we used $|A|^2\geq H^2/2$ and $\chi\geq \sinh \rho$. Applying maximum principle to \eqref{s6:var} gives the upper bound on $\varphi$. This together with the upper bound $\chi\leq \sinh(2\rho_+(0))$ implies that $H$ is bounded from above by a constant depending on $\rho,\alpha,M_0$.

On the other hand, by Proposition \ref{pro-3.1} we have $H\geq 2\sqrt{K}>2$. This together with the upper bound on mean curvature and the pinching estimate \eqref{s3:pinc-1} implies that all the principal curvatures of $M_t$ are bounded above and below by positive constants. In particular, the coefficients $\dot{F}^{ij}=\a H^{\a-1} g^{ij}$ in the second order part of the problem have eigenvalues bounded above and below by positive constants, and then the flow remains to be uniformly parabolic. By applying the H\"{o}lder estimate of the second derivatives of uniformly parabolic equation of Andrews \cite{Andrews2004}, and standard Schauder theory, we can derive the higher regularity estimates of the solution to the flow. It follows that the solution can be extended past time $t_1$. This means that the smooth solution of the flow \eqref{3.1} exists as long as the evolving domain encloses a non-vanishing volume. Therefore the inner radius $\rho_-(t)\to 0$ as $t\to T$. The estimate \eqref{s6:pinc-1} then says that the outer radius converges to zero as $t\to T$ as well. In other words, the flow remains smooth until it contracts to a point as $t\to T$.
\endproof

\subsection{Convergence of the rescaled solution}
To study the asymptotical behavior of the flow, we consider the rescaling of the solution by adapting a similar procedure in \cite{Ger15} for the contracting flow in the sphere. If the initial surface is a geodesic sphere in $\mathbb{H}^3$, the evolving surfaces $M_t$ are all geodesic spheres with the same center and radius $\Theta=\Theta(t,T)$ satisfying \begin{equation*}
  \frac d{dt}\Theta=-2^{\alpha}\coth^{\alpha}\Theta.
\end{equation*}
The spherical solution shrinks to a point in finite time, and we choose the initial sphere such that the maximum existence time of the shrinking sphere is equal to the maximum existence time $T$ as in Lemma \ref{s6:lem1}.

We define the rescaled mean curvature by $\tilde{H}=\Theta(t,T)H$. Since we only care about the asymptotical behavior of the flow, we may focus on the flow in the time interval $[t_0,T)$, where $t_0$ is the time such that the pinching estimate \eqref{s6:pinc-1} holds in $[t_0,T)$.
\begin{lem}\label{s6:lem-H-ub}
There exists a uniform constant $C$ such that
\begin{equation}\label{s6:H-ub}
  \tilde{H}\leq C,\qquad \forall~t\in [t_0,T).
\end{equation}
\end{lem}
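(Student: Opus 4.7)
The plan is to adapt Tso's technique \cite{Tso1985}, but with a carefully rescaled test function so that the quantity under the maximum principle is bounded on the spherical comparison solution itself. Let $p$ be the contraction point produced by Proposition \ref{s6:lem1} and write $M_t$ as $\mathrm{graph}\,u(\cdot,t)$ in geodesic polar coordinates around $p$. Combining the pinching estimate \eqref{s3:pinc-1} with barrier comparison against spherical solutions gives constants $c,C>0$ with $c\,\Theta(t,T)\le u(x,t)\le C\,\Theta(t,T)$ for all $t\in[t_0,T)$. The support function $\chi=\sinh u\,\langle\partial_r,\nu\rangle$ (already used in Proposition \ref{s6:lem1}) then satisfies $c_2\sinh\Theta\le \chi\le C_1\sinh\Theta$ for uniform constants, because strict convexity forces $\langle\partial_r,\nu\rangle\ge 2\epsilon_0>0$.

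Fix $\epsilon\in(0,c_2/2)$ and consider
\begin{equation*}
\varphi:=\frac{\tilde H^\alpha}{\tilde\chi-\epsilon},\qquad \tilde\chi:=\frac{\chi}{\sinh\Theta}.
\end{equation*}
Since $\tilde\chi-\epsilon\in[c_2/2,C_1]$ uniformly, a uniform bound on $\varphi$ is equivalent to \eqref{s6:H-ub}; moreover $\varphi\to 2^\alpha/(1-\epsilon)$ on the sphere itself, which explains why this rescaling is the right one. Equivalently I work with $\psi:=H^\alpha/(\chi-\epsilon\sinh\Theta)=(\Theta^\alpha\sinh\Theta)^{-1}\varphi$, whose spatial maxima coincide with those of $\varphi$ because $\Theta$ depends only on $t$. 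Using the evolution equations
\begin{align*}
\partial_t H^\alpha &= \alpha H^{\alpha-1}\Delta H^\alpha+\alpha H^{2\alpha-1}(|A|^2-2),\\
\partial_t \chi &= \alpha H^{\alpha-1}\Delta\chi+\alpha H^{\alpha-1}|A|^2\chi-(1+\alpha)H^\alpha\cosh u,\\
\partial_t \Theta &= -(2\coth\Theta)^\alpha,
\end{align*}
together with the identity $H^\alpha-\psi\chi=-\epsilon\sinh\Theta\,\psi$ forced by $\psi(\chi-\epsilon\sinh\Theta)=H^\alpha$, the gradient condition $\nabla\psi=0$ at the spatial maximum (which cancels the cross terms between $\nabla H^\alpha$ and $\nabla\chi$ in $\Delta\psi$), the inequality $|A|^2\ge H^2/2$, and the sign $\partial_t\Theta<0$, I arrive at
\begin{equation*}
\frac{\partial_t\psi}{\psi}\bigg|_{\max}\le -\frac{\alpha\epsilon\sinh\Theta}{2}\,H+(1+\alpha)\cosh u.
\end{equation*}
Since $\sinh\Theta\cdot H=(\sinh\Theta/\Theta)\,\tilde H\ge \tilde H/2$ and $\cosh u\le 2$ for $t$ close to $T$, this becomes $\partial_t\log\psi\le -\tfrac{\alpha\epsilon}{4}\tilde H+2(1+\alpha)$ at the maximum.

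To conclude, at the spatial maximum of $\varphi$ the definition gives $\tilde H^\alpha=\varphi(\tilde\chi-\epsilon)\ge (c_2/2)\,\varphi_{\max}(t)$, so whenever $\varphi_{\max}(t)$ exceeds a threshold $A=(8(1+\alpha)/(\alpha\epsilon))^\alpha\cdot 2/c_2$ one has $\tilde H>8(1+\alpha)/(\alpha\epsilon)$ at that point, forcing $\partial_t\varphi_{\max}(t)\le 0$ (the extra contribution $\partial_t\log(\Theta^\alpha\sinh\Theta)<0$ that converts $\psi$-growth into $\varphi$-growth only helps). Hence $\varphi_{\max}(t)\le\max\{A,\varphi_{\max}(t_0)\}$ on $[t_0,T)$, and from $\tilde H^\alpha\le C_1\varphi_{\max}$ we obtain \eqref{s6:H-ub}. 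The only delicate point in the plan is isolating the correct rescaling: the naive Tso function $H^\alpha/(\chi-\epsilon)$ is unbounded on the spherical barrier as $\Theta\downarrow 0$, so one must insert the factor $\sinh\Theta$ inside the denominator and pair it with $\tilde H=\Theta H$ in the numerator; after that, the damping term $-\tfrac{\alpha\epsilon}{2}\sinh\Theta\cdot H$ is precisely of the right magnitude to absorb the $(1+\alpha)\cosh u$ term exactly when $\tilde H$ is large.
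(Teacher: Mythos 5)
Your strategy is recognizably Tso's, and the algebraic machinery you deploy (the cancellation of cross terms at a critical point, the favourable sign created by the identity $H^\alpha-\psi\chi=-\epsilon\sinh\Theta\,\psi$, and the additional negative contribution coming from $\partial_t\Theta<0$) is carried out essentially correctly. There is a harmless bookkeeping slip: your displayed inequality for $\partial_t\psi/\psi$ should read $\partial_t\psi/\psi^2$, but this does not affect the sign argument.

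The real problem is the very first step. You centre the graph parametrization at the contraction point $p$ and assert that the pinching estimate together with barrier comparison gives $c\,\Theta(t,T)\le u(x,t)$ for \emph{all} $t\in[t_0,T)$. The barrier comparison only says that the geodesic sphere of radius $\Theta(t,T)$ around $p$ must meet $M_t$, i.e.\ $u_{\min}(t)\le\Theta(t,T)\le u_{\max}(t)$; and the radius pinching $\rho_+(t)\le C\rho_-(t)$ from \eqref{s6:pinc-1} is a statement about the in-ball and circumscribed ball of $\Omega_t$ \emph{centred at $t$-dependent points}. Neither controls $d(p,\partial\Omega_t)$ from below in terms of $\rho_-(t)$. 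Indeed one can produce nested families of convex domains shrinking to a point $p$ with $\rho_+/\rho_-$ bounded yet $d(p,\partial\Omega_t)/\rho_-(t)$ arbitrarily small: take shrinking balls that drift towards $p$ at a rate comparable to the shrink rate. So the lower bound $u\ge c\,\Theta$ centred at $p$ is, in effect, a roundness-around-$p$ statement which is part of what the lemma is meant to help establish, and cannot be assumed at this stage. Since it feeds both into $\tilde\chi-\epsilon\ge c_2/2$ and into the gradient estimate $\langle\partial_r,\nu\rangle\ge 2\epsilon_0$, the construction collapses without it.

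The paper avoids this issue by keeping the denominator time-independent: for each fixed $t_1<T$ it centres the polar coordinates at the in-ball centre of $\Omega_{t_1}$, uses the trivial nested bound $u(\cdot,t)\ge\rho_-(t_1)$ on $[t_0,t_1]$, runs the Tso test function $H^\alpha/(\chi-\epsilon\sinh\rho_-(t_1))$ on $[t_0,t_1]$, reads off the conclusion only at $t=t_1$, and at the very end compares $\sinh\rho_-(t_1)$ with $\Theta(t_1,T)$. Your time-dependent denominator $\chi-\epsilon\sinh\Theta$ is an attractive variant, but making it rigorous would require re-anchoring the centre over time windows on which $\Theta$ changes by at most a bounded factor (exactly as the paper does later, in Lemma \ref{s7:lem3}) and iterating, at which point one essentially recovers the paper's argument.
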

\proof
For any $t_1\in (t_0,T)$, let $B_{\rho_-(t_1)}(x_1)$ be an inball of $M_{t_1}$. Write $M_t=\mathrm{graph}~u(x,t)$ as geodesic radial graphs with respect to the point $x_1$ for all $t\in [t_0,t_1]$. Then $\rho_-(t_1)\leq u(t_1)\leq u(t)\leq R=2\rho_+(0)$. Since $M_t$ is strictly convex, we have $\chi\geq \sinh\rho_-(t_1)$ for all $t\in [t_0,t_1]$. Applying maximum principle to \eqref{s6:var}, we have
\begin{align*}
  \varphi(t) \leq & ~\max\biggl\{\varphi(t_0)\left(1+\frac{\alpha+1}4 (\frac 12 \sinh \rho_-(t_1))^{\frac{\alpha+1}{\alpha}}\varphi(t_0)^{\frac{\alpha+1}{\alpha}}(t-t_0)\right)^{-\frac{\alpha}{1+\alpha}},\\
   & \qquad \quad \left(\frac{4(1+\alpha)}{\alpha}\cosh R\right)^{\alpha}(\frac 12 \sinh \rho_-(t_1))^{-\alpha-1}\biggr\}.
\end{align*}
Choosing $t_1$ sufficiently close to $T$, we can make $\rho_-(t_1)$ small enough such that
\begin{align*}
  \varphi(t_1) \leq & ~\left(\frac{4(1+\alpha)}{\alpha}\cosh R\right)^{\alpha}(\frac 12 \sinh \rho_-(t_1))^{-\alpha-1}.
\end{align*}
Then
\begin{align*}
H^{\alpha}(t_1)(\sinh \rho_-(t_1))^{\alpha} =&\varphi(t_1)(\chi-\frac 12 \sinh \rho_-(t_1))(\sinh \rho_-(t_1))^{\alpha}\\
\leq & ~\left(\frac{4(1+\alpha)}{\alpha}\cosh R\right)^{\alpha}(\frac 12 \sinh \rho_-(t_1))^{-1}(\chi-\frac 12 \sinh \rho_-(t_1))\\
\leq &~\left(\frac{4(1+\alpha)}{\alpha}\cosh R\right)^{\alpha}(\frac 12 \sinh \rho_-(t_1))^{-1}(\sinh2\rho_+(t_1)-\frac 12 \sinh \rho_-(t_1))\\
\leq &~ C,
\end{align*}
where in the last inequality we used the estimate \eqref{s6:pinc-1}. The estimate \eqref{s6:H-ub} follows because $\Theta(t_1,T)$ is comparable with $\sinh\rho_-(t_1)$ for $t_1$ sufficiently close to $T$. Indeed,  by the comparison principle the spherical solution of radius $\Theta(t,T)$ must intersect $M_t$ for $t\in [0,T)$. This implies that $\inf_{M_t}u(\cdot,t)\leq \Theta(t,T)\leq \sup_{M_t}u(\cdot,t)$. Combining this with the pinching estimate \eqref{s6:pinc-1}, we have $\rho_-(t_1)\leq \Theta(t_1,T)\leq 2\rho_+(t_1)\leq 2C\rho_-(t_1)$. For $t_1$ sufficiently close to $T$, this is equivalent to that $\Theta(t_1,T)$ is comparable with $\sinh\rho_-(t_1)$.
\endproof

Let us define a new time parameter $\tau=-\log \Theta$. As $\Theta(t,T) \to 0$ as $t\to T$, we have $\tau$ ranges from $0$ to $\infty$. Then
\begin{equation*}
  \frac {d\tau}{dt}=-\frac 1{\Theta}\frac d{dt}\Theta=2^{\alpha}\Theta^{-1}\coth^{\alpha}\Theta.
\end{equation*}
From \eqref{s6:H-evl-1} we obtain
\begin{align}\label{s6:H-td1}
  \frac{\partial}{\partial \tau} \tilde{H}=& \frac{\partial}{\partial t}(\Theta H)\frac{dt}{d\tau}= 2^{-\alpha}\tanh^{\alpha}(\Theta)\Theta^2\frac{\partial}{\partial t}H-\tilde{H}\nonumber\\
  =& 2^{-\alpha}\tanh^{\alpha}(\Theta)\Theta^2\left(\Delta H^{\alpha}+ H^{\alpha}(|A|^2-2)\right)-\tilde{H}\nonumber\\
  =&2^{-\alpha}\Theta^{-\alpha}\tanh^{\alpha}(\Theta)\Theta^2\Delta \tilde{H}^{\alpha}+2^{-\alpha}\Theta^{-\alpha}\tanh^{\alpha}(\Theta)\tilde{H}^{\alpha}(|\tilde{A}|^2-2\Theta^2)-\tilde{H},
\end{align}
where $\tilde{A}=\Theta A$ denotes the rescaled second fundamental form. The upper bound \eqref{s6:H-ub} together with the pinching estimate \eqref{s3:pinc-1} implies that the rescaled principal curvatures $\tilde{\kappa}_i=\Theta \kappa_i$ are uniformly bounded from above.

Let $t_1\in [t_0,T)$ be arbitrary and let $t_2>t_1$ such that
\begin{equation*}
  \Theta(t_1,T)=2\Theta(t_2,T).
\end{equation*}
Then $\tau_i=-\log\Theta(t_i,T)$ satisfies $\tau_2=\tau_1+\log 2$. Introduce polar coordinates with respect to the center of an inball of $\Omega_{t_2}$ and write $M_t$ as graphs of $u(x,t)$ for $t\in [t_1,t_2]$. Then the pinching estimate \eqref{s6:pinc-1} implies
\begin{equation}\label{s6:uTheta}
  C^{-1}\Theta(t,T)\leq u(x,t)\leq C\Theta(t,T),\qquad \forall~t\in [t_1,t_2]
\end{equation}
and $ u_{\max}(t)\leq C^2u_{\min}(t)$,  for all $t\in [t_1,t_2]$. Since $M_t$ is strictly convex, this implies that
\begin{equation}\label{s6:v}
  v^2=1+\frac{|Du|^2_{g_{\mathbb{S}^2}}}{\sinh^2u}
\end{equation}
is uniformly bounded in $[t_1,t_2]\times \mathbb{S}^2$ (see \cite[Theorem 2.7.10]{Ger06}).
\begin{lem}\label{s7:lem3}
The rescaled mean curvature $\tilde{H}$ satisfies the porous medium type equation
\begin{align}\label{s6:H-td2}
  \frac{\partial}{\partial \tau} \tilde{H}=& \bar{\nabla}_i(a^{ij}\bar{\nabla}_j\tilde{H}^{\alpha})+b^i\partial_i\tilde{H}^{\alpha}+c \tilde{H}
\end{align}
in the cylinder $Q(\tau_1,\tau_2)=[\tau_1,\tau_2]\times \mathbb{S}^2$ with uniformly bounded coefficients $b^i$ and $c$, and
\begin{equation}\label{s6:a}
  C^{-1}\leq (a^{ij})\leq C
\end{equation}
 independent of $\tau_i$. Here $\tau_i=-\log\Theta(t_i,T)$ satisfies $\tau_2=\tau_1+\log 2$, $\bar{\nabla}$ denotes the covariant derivative on $\mathbb{S}^2$ with respect to the standard metric $g_{\mathbb{S}^2}=(\sigma_{ij})$.
\end{lem}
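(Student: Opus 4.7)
The plan is to start from \eqref{s6:H-td1} and pass to the graph parametrization $M_t=\mathrm{graph}\,u(\cdot,t)$ over $\mathbb{S}^2$ used to derive \eqref{s6:uTheta} and \eqref{s6:v}. Writing the induced metric as $g_{ij}=u_iu_j+\sinh^2u\cdot\sigma_{ij}$ and setting $W:=\sqrt{\det g/\det\sigma}=\sinh^2u\cdot v$, the standard identity $\Delta_{M_t}f=W^{-1}\bar\nabla_i(Wg^{ij}\bar\nabla_jf)$ converts the Laplacian on $M_t$ into a divergence on $\mathbb{S}^2$. Let $A(\Theta):=2^{-\alpha}\Theta^{-\alpha}\tanh^\alpha\Theta\cdot\Theta^2$; since $A$ is spatially constant, commuting it past the divergence gives
\[
AW^{-1}\bar\nabla_i(Wg^{ij}\bar\nabla_j\tilde H^\alpha)=\bar\nabla_i(Ag^{ij}\bar\nabla_j\tilde H^\alpha)+AW^{-1}(\bar\nabla_iW)g^{ij}\bar\nabla_j\tilde H^\alpha.
\]
Substituting into \eqref{s6:H-td1} yields the form \eqref{s6:H-td2} with $a^{ij}:=Ag^{ij}$, $b^j:=AW^{-1}(\bar\nabla_iW)g^{ij}$, and $c:=2^{-\alpha}\Theta^{-\alpha}\tanh^\alpha\Theta\cdot\tilde H^{\alpha-1}(|\tilde A|^2-2\Theta^2)-1$.

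The ellipticity \eqref{s6:a} is immediate: as $\Theta\to0$ we have $A\sim\Theta^2$, while the eigenvalues of $g_{ij}$ with respect to $\sigma$ are $\sinh^2u$ and $\sinh^2u\cdot v^2$; by \eqref{s6:uTheta} $\sinh u$ is comparable to $\Theta$ and by \eqref{s6:v} $v$ is bounded, so the eigenvalues of $g^{ij}$ are comparable to $\Theta^{-2}$, making $a^{ij}$ bounded above and below by positive constants independent of $\tau_1$. For $c$, Lemma \ref{s6:lem-H-ub} supplies $\tilde H^{\alpha-1}\leq C$ for $\alpha\geq1$, and the pinching estimate \eqref{s3:pinc-1} gives $|\tilde A|^2\leq C\tilde H^2\leq C$, so $c$ is bounded.

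The remaining task is the bound on $b^j$. Decompose $\bar\nabla_i\log W=2\coth u\cdot u_i+v^{-1}\bar\nabla_iv$. The first term is $O(1)$ since $\coth u\sim\Theta^{-1}$ and $|Du|=O(\Theta)$ (from $v$ bounded and $\sinh u\sim\Theta$). Differentiating $v^2=1+\sigma^{kl}u_ku_l/\sinh^2u$ expresses $v^{-1}\bar\nabla_iv$ as a combination of $u^lu_{li}/\sinh^2u$ and $(v^2-1)\coth u\cdot u_i$, both of which are $O(1)$ provided $|D^2u|=O(\Theta)$. This last estimate comes from the standard graph formula for the second fundamental form, which expresses $h_{ij}$ as a bounded linear function of $u_{ij}$ plus lower order terms of size $O(\Theta)$, combined with the pointwise bound $|h_{ij}|\sim|\kappa|\cdot|g_{ij}|\sim\Theta^{-1}\cdot\Theta^2=\Theta$ coming from the pinching estimate and Lemma \ref{s6:lem-H-ub}. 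Therefore $A\bar\nabla_i\log W=O(\Theta^2)$ and, multiplied by $g^{ij}\sim\Theta^{-2}$, yields $|b^j|\leq C$ uniformly on $Q(\tau_1,\tau_2)$.

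The main technical obstacle is this last step: translating the bounded rescaled principal curvatures into a $C^2$ estimate on the graph function $u$ so that $b^j$ is bounded pointwise rather than merely integrable. All constants depend only on $M_0$ and $\alpha$, not on the choice of $\tau_1$, so the ellipticity bound and the coefficient bounds are uniform on $Q(\tau_1,\tau_2)$ as required.
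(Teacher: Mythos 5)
Your proposal is correct and follows the same essential strategy as the paper: start from \eqref{s6:H-td1}, recast the Laplacian on $M_t$ as a divergence on $\mathbb{S}^2$ plus a drift, and then control the drift term via pointwise $C^2$ control of the graph function $u$, which you (like the paper) extract from the second fundamental form formula \eqref{s6:h} together with the bound on $\tilde{h}_i^j = \Theta h_i^j$. The one cosmetic difference is in how you organize the divergence identity: you use the conformal factor $W = \sqrt{\det g / \det \sigma} = v\sinh^2 u$ and the identity $\Delta_{M_t}f = W^{-1}\bar\nabla_i\!\left(W g^{ij}\bar\nabla_j f\right)$, so that $b^j = a^{ij}\,\bar\nabla_i\log W$ and the bound reduces to $\bar\nabla_i\log W = O(1)$; the paper instead keeps the Christoffel symbol difference $\Gamma_{ij}^k - \bar\Gamma_{ij}^k$ explicitly in $b^i$ and bounds it via the formula $\Gamma_{ij}^k - \bar\Gamma_{ij}^k = \tfrac12 g^{kq}\!\left(\bar\nabla_i g_{jq} + \bar\nabla_j g_{iq} - \bar\nabla_q g_{ij}\right)$. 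The two are the same thing (the trace of the Christoffel difference is exactly $\bar\nabla_i\log W$), but your $W$-based formulation is slightly cleaner, as it keeps the $a^{ij}$ in pure divergence form and reduces the drift bound to a single scalar logarithmic derivative. Your chain of estimates --- $|Du| = O(\Theta)$ from the bound on $v$, $|D^2 u| = O(\Theta)$ from \eqref{s6:h} and the bound on $\tilde{h}_i^j$, and then $\bar\nabla_i\log W = O(1)$ --- is exactly the content of the paper's remark that $\Gamma_{ij}^k - \bar\Gamma_{ij}^k$ depends on the first and second derivatives of $\varphi$, which are uniformly bounded. Your derivation of $c$ and the ellipticity estimate \eqref{s6:a} agree with the paper.
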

\proof
By \eqref{s6:H-td1}, the evolution of $\tilde{H}$ satisfies
\begin{align}
  \frac{\partial}{\partial \tau} \tilde{H}=& 2^{-\alpha}\Theta^{-\alpha}\tanh^{\alpha}(\Theta)\Theta^2\nabla_i(g^{ij}\nabla_j \tilde{H}^{\alpha})\nonumber\\
  &\quad + 2^{-\alpha}\Theta^{-\alpha}\tanh^{\alpha}(\Theta)\tilde{H}^{\alpha}(|\tilde{A}|^2-2\Theta^2)-\tilde{H}\nonumber\\
  =& 2^{-\alpha}\Theta^{-\alpha}\tanh^{\alpha}(\Theta)\left(\bar{\nabla}_i(\Theta^2g^{ij}\bar{\nabla}_j \tilde{H}^{\alpha})+\Theta^2g^{ij}(\Gamma_{ij}^k-\bar{\Gamma}_{ij}^k)\partial_k\tilde{H}^{\alpha}\right)\nonumber\\
  &\quad + 2^{-\alpha}\Theta^{-\alpha}\tanh^{\alpha}(\Theta)\tilde{H}^{\alpha}(|\tilde{A}|^2-2\Theta^2)-\tilde{H},
\end{align}
where $g^{ij}$ is the inverse of the metric $g_{ij}=u_iu_j+\sinh^2u\sigma_{ij}=\sinh^2u(\varphi_i\varphi_j+\sigma_{ij})$, $\Gamma_{ij}^k$ and $\bar{\Gamma}_{ij}^k$ are Christoffel symbols of the metric $g_{ij}$ and $\sigma_{ij}$ respectively. Here $\varphi$ is defined such that
\begin{equation*}
  \varphi_i=\frac{u_i}{\sinh u}.
\end{equation*}
By the estimate \eqref{s6:v} on $v$, $\varphi_i$ is uniformly bounded. Hence $g^{ij}\Theta^2\approx \sinh^{-2}u\Theta^2$ is uniformly bounded from above and from below by positive constants in view of \eqref{s6:uTheta}. Since $\Theta$ is small in the interval $[t_0,T)$, we also have uniform bound on $\Theta^{-\alpha}\tanh^{\alpha}\Theta$. This gives the estimate \eqref{s6:a} on the coefficients $a^{ij}$. To estimate the bound on $b^i$, we notice that
\begin{equation*}
\Gamma_{ij}^k-\bar{\Gamma}_{ij}^k=\frac 12 g^{kq}(\bar{\nabla}_ig_{jq}+\bar{\nabla}_jg_{iq}-\bar{\nabla}_qg_{ij}),
\end{equation*}
which depends on the first and second derivatives of $\varphi$. Recall that the Weingarten matrix of the graph $M_t=\mathrm{graph} ~u(x,t)$ is given by (see \cite{Ger06})
\begin{equation}\label{s6:h}
  h_i^j=-\frac 1{v\sinh u}(\sigma^{ik}-\frac{\varphi^i\varphi^k}{v^2})\varphi_{jk}+\frac{\coth u}v\delta_i^j.
\end{equation}
Since the rescaled Weingarten matrix $\tilde{h}_i^j=\Theta h_i^j$ and $\varphi_i$ are uniformly bounded, The equation \eqref{s6:h} gives the upper bound on $\varphi_{ij}$, the second derivatives of $\varphi$. Then $\Gamma_{ij}^k-\bar{\Gamma}_{ij}^k$ is uniformly bounded. Finally, the bound on the zero order term $c$ follows from upper bound on the rescaled principal curvatures. This completes the proof of the lemma.
\endproof

We can apply \cite[Theorem 1.2]{DiBenedetto-Friedman1985} to \eqref{s6:H-td2} to deduce the H\"older continuity estimate for $\tilde{H}$ on the region $Q(\tau_1,\tau_2)$, with the constant depending on $\int_{Q(\tau_1,\tau_2)} |\bar{\nabla} \tilde{H}^\a|^2 d\mu_{\mathbb{S}^2}d\tau$. To bound this term,  by \eqref{s6:H-evl-1} and integration by parts, we have
\begin{align*}
\frac{d}{dt}\int_{M_{t}} H^{\a+1}d\mu_t=&~(\a+1) \int_{M_{t}} H^\a \left(\Delta H^{\a}+H^\a(|A|^2-2)\right)d\mu_{t}-\int_{M_{t}} H^{2(\a+1)}d\mu_{t} \\
\leq &~\alpha \int_{M_{t}} H^{2(\a+1)}d\mu_{t}-(\a+1) \int_{M_{t}} |\nabla H^{\a}|^2d\mu_{t},
\end{align*}
where we used $|A|^2\leq H^2$ since each $M_t$ is strictly convex. Equivalently,
\begin{align*}
\frac{d}{d\tau}\int_{M_{t}} H^{\a+1}d\mu_t=&~\frac{d}{dt}\int_{M_{t}} H^{\a+1}d\mu_t\cdot \frac {dt}{d\tau}\\
\leq &~2^{-\alpha}\Theta\tanh^{\alpha}\Theta\left(\alpha\int_{M_{t}} H^{2(\a+1)}d\mu_{t}-(\a+1) \int_{M_{t}} |\nabla H^{\a}|^2d\mu_{t}\right),
\end{align*}
where $\tau=-\log\Theta(t,T)$. This implies that
\begin{align}\label{s6:dH}
  \int_{\tau_1}^{\tau_2}\int_{M_t}|\nabla H^{\alpha}|^2d\mu d\tau\leq & -\frac{2^{\alpha}}{\alpha+1}\int_{\tau_1}^{\tau_2}\frac 1{\Theta\tanh^\alpha\Theta}\frac{d}{d\tau}\int_{M_{t}} H^{\a+1} \nonumber\\
   &+ \frac{\alpha}{\alpha+1}\int_{\tau_1}^{\tau_2}\int_{M_t}H^{2(\alpha+1)}d\mu d\tau\nonumber\\
   =&-\frac{2^{\alpha}}{\alpha+1}\left(\frac 1{\Theta\tanh^\alpha\Theta}\int_{M_{t}} H^{\a+1}\right) \biggr|_{\tau_1}^{\tau_2}\nonumber\\
  & +\frac{2^{\alpha}}{\alpha+1}\int_{\tau_1}^{\tau_2}\left(\frac 1{\Theta\tanh^\alpha\Theta}
   +\frac{\alpha}{\tanh^{(\alpha+1)}\Theta\cosh^2\Theta}\right)\int_{M_t}H^{2(\alpha+1)}d\mu d\tau\nonumber\\
   &+ \frac{\alpha}{\alpha+1}\int_{\tau_1}^{\tau_2}\int_{M_t}H^{2(\alpha+1)}d\mu d\tau.
\end{align}
Since $g_{ij}\approx \Theta^2\sigma_{ij}$,
\begin{equation*}
  \int_{\tau_1}^{\tau_2}\int_{\mathbb{S}^2}|\bar{\nabla} \tilde{H}^{\alpha}|_{g_{\mathbb{S}^2}}^2d\mu_{\mathbb{S}^2}d\tau ~\lesssim~ \Theta^{2\alpha}\int_{\tau_1}^{\tau_2}\int_{M_t}|\nabla H^{\alpha}|^2d\mu d\tau.
\end{equation*}
Multiplying the two sides of \eqref{s6:dH} by $\Theta^{2\alpha}$ and using the estimate \eqref{s6:H-ub}, we obtain the required bound on $\int_{Q(\tau_1,\tau_2)} |\bar{\nabla} \tilde{H}^\a|^2 d\mu_{\mathbb{S}^2}d\tau$. Thus, applying Theorem 1.2 in \cite{DiBenedetto-Friedman1985}, we obtain
\begin{lem}
For any $(x,\tau)\in \mathbb{S}^2\times [\tau_0+\log 2,\infty)$, there exist a universal constant $\delta>0$ and some $\gamma<1$ such that the $\gamma$-H\"{o}lder norm of $\tilde{H}$ on the space-time neighborhood $B_{\delta}(x)\times [\tau-\delta,\tau+\delta]$ is uniformly bounded, where $B_{\delta}(x)$ denotes a geodesic ball of radius $\delta$ centered at $x$ in $\mathbb{S}^2$.
\end{lem}

Let $p\in \mathbb{H}^3$ be the point the flow surfaces are shrinking to. Introduce geodesic polar coordinates around $p$ and write $M_t$ as graphs of $u(x,t)$, $(x,t)\in \mathbb{S}^2\times [t_0,T)$. We consider the rescaled function $\tilde{u}(x,\tau)=u(x,t)\Theta^{-1}(t,T)$ on $(x,\tau)\in \mathbb{S}^2\times [\tau_0,\infty)$. Note that the rescaled principal curvatures $\tilde{\kappa}_i=\Theta\kappa_i$ are not the principal curvatures of the graph of $\tilde{u}$, though they are related. By the proof of Lemma \ref{s7:lem3}, the $C^2$-norm of $\tilde{u}$ is uniformly bounded. Thus for any sequence of time $\tau_j$, there exists a subsequence (still denoted by $\tau_j$) such that $\tilde{u}(\cdot,\tau_j)$ converges in $C^{1,\gamma}$ to a limit function $\tilde{u}_{\infty}$ for any $\gamma<1$.  At each time $\tau_j$, let $p_j\in \mathbb{S}^2$ be a point such that $\tilde{u}_{\max}(\tau_j)=\tilde{u}(p_j,\tau_j)$. Then $\tilde{\kappa}_i(p_j,\tau_j)=\kappa_i(p_j,t_j)\Theta(t_j,T)\geq \coth u(p_j,t_j)\Theta(t_j,T)\geq C>0$, where we recall that $\tau_j$ and $t_j$ are related by $\tau_j=-\log\Theta(t_j,T)$. This implies that $\tilde{H}(p_j,\tau_j)\geq 2C>0$. The H\"{o}lder continuity of $\tilde{H}$ implies that $\tilde{H}$ can not decrease too fast in the sense that $\tilde{H}\geq C$ in $B_{\delta}(p_j)\times [\tau_j-\delta,\tau_j+\delta]$. The rescaled function $\tilde{u}(x,\tau)$ now satisfies the uniformly parabolic equation
\begin{align}\label{s7:u-td}
  \frac{\partial}{\partial\tau}\tilde{u} =& -2^{-\alpha}\tanh^{\alpha}\Theta vH^{\alpha}+\tilde{u}\nonumber\\
  = & -2^{-\alpha}\Theta^{-\alpha}\tanh^{\alpha}\Theta v\tilde{H}^{\alpha}+\tilde{u}
\end{align}
in $B_{\delta}(p_j)\times [\tau_j-\delta,\tau_j+\delta]$, where $v$ is the function defined in \eqref{s6:v}. By the H\"{o}lder estimate \cite{Andrews2004} and Schauder estimate, we obtain uniform $C^{\infty}$ estimate for the rescaled function $\tilde{u}$ in $B_{\delta/2}(p_j)\times [\tau_j-\delta/2,\tau_j+\delta/2]$. Since the sphere $\mathbb{S}^2$ is compact, there exists a point $p_{\infty}\in \mathbb{S}^2$ such that after passing to a subsequence we have $p_j\to p_{\infty}$. The above estimate implies that $\tilde{u}(x,\tau_j)$ converges to $\tilde{u}_{\infty}$ in $C^{\infty}$ for all $x\in B_{\delta/2}(p_{\infty})$.

By Theorem \ref{theo-3.3},
\begin{align*}
\frac{\k_1}{\k_2}+\frac{\k_2}{\k_1}-2= &\frac{(\k_1+\k_2)^{2\a}(\k_1-\k_2)^2}{(\k_1\k_2-1)^2}\frac{(\k_1\k_2-1)^2}{\k_1\k_2(\k_1+\k_2)^{2\alpha}}\\
\leq&~C H^{2(1-\alpha)}=C\tilde{H}^{2(1-\alpha)}\Theta^{2(\alpha-1)}
\end{align*}
which converges to zero as $\tau_j\to\infty$ in $B_{\delta/2}(p_{\infty})$, because $\alpha>1$ and $\tilde{H}(x,\tau_j)$ is bounded in $B_{\delta}(p_j)$. In other words,
\begin{align}\label{s6:pinc-2}
1~\leq~ \frac{\k_1}{\k_2}~\leq ~1+C\Theta^{(\alpha-1)}=1+Ce^{-(\alpha-1)\tau_j}~ \to 1\qquad \mathrm{as} ~\tau_j\to\infty
\end{align}
in $B_{\delta/2}(p_{\infty})$. By the inequality $|\nabla H|^2\leq 4|\nabla A|^2/3$ (see \cite[\S 2]{Huisken1984}) and interpolation inequality for the unscaled quantity on $M_t$, we have
\begin{equation*}
  |\nabla A|^2\leq 3|\nabla \mathring{A}|^2\leq C|\mathring{A}||\nabla^2\mathring{A}|.
\end{equation*}
We deduce that
\begin{equation*}
  |\nabla \tilde{A}|^2=\Theta^2g^{ij}(\Theta h_{k;i}^l)(\Theta h_{l;j}^k) =\Theta^4|\nabla A|^2\leq  C|\mathring{\tilde{A}}||\nabla^2\mathring{\tilde{A}}|\leq C e^{-(\alpha-1)\tau_j}
\end{equation*}
converges to zero in $B_{\delta/2}(p_{\infty})$ as $\tau_j\to\infty$, where we used the fact that $|\nabla^2\mathring{\tilde{A}}|$ is bounded due to the regularity estimate of $\tilde{u}=u\Theta^{-1}$. This implies that
\begin{align*}
  (\tilde{H}_{\max}-\tilde{H}_{\min})|_{B_{\delta/2}(p_{\infty})} \leq & ~\Theta |\nabla A|\mathrm{diam}(M_t\cap ~\mathrm{graph} ~u(\cdot,t_j)|_{B_{\delta/2}(p_{\infty})}) \\
  = &~|\nabla\tilde{A}|\Theta^{-1} \mathrm{diam}(M_t\cap ~\mathrm{graph} ~u(\cdot,t_j)|_{B_{\delta/2}(p_{\infty})}) \\
  \leq & ~Ce^{-\frac 12(\alpha-1)\tau_j},
\end{align*}
where $\tau_j=-\log\Theta(t_j,T)$. Therefore, $\tilde{H}(x,\tau_j)$ becomes arbitrary close to the value $\tilde{H}(p_j,\tau_j)$ in $B_{\delta/2}(p_{\infty})$. Using the H\"{o}lder continuity of $\tilde{H}$ and repeating the above argument, we can extend the region where $\tilde{u}(x,\tau_j)$ converges in $C^{\infty}$ to $\tilde{u}_{\infty}$ to a larger one, say $B_{\delta}(p_{\infty})$. After a finite number of iterations, we deduce that $\tilde{u}(\cdot,\tau_j)$ converges in $C^{\infty}$ to $\tilde{u}_{\infty}$ on $\mathbb{S}^2$.  The above argument can be applied to any sequence $\tau_j$, we conclude that the whole family $\tilde{u}(\cdot,\tau)$ satisfy uniform $C^{\infty}$ estimates on $\mathbb{S}^2$, and converge to the same limit function $\tilde{u}_{\infty}$ smoothly as $\tau\to\infty$. Here the convergence to the same limit function follows from the evolution equation \eqref{s7:u-td} and Cauchy criterion. The interpolation inequality then implies that $|\nabla \tilde{A}|^2(\cdot,\tau)\leq Ce^{-(\alpha-1)\tau}$ converges to zero as $\tau\to\infty$ on $\mathbb{S}^2$. By a similar argument as in \cite[\S 8]{Ger15}, we can prove that $\tilde{u}\to 1$ as $\tau\to\infty$. The smooth convergence of $\tilde{u}$ to $1$ again follows from the interpolation inequality. Thus, we complete the proof of Theorem \ref{main-theo-I} for the flow by powers of mean curvature.

\begin{proof}[Proof of Theorem \ref{main-theo-I}]
We have given a detailed proof for the case (i). For case (ii), the flow by powers of scalar curvature. A similar argument as in case (i) implies that the scalar curvature $R=2(K-1)$ blows up as the final time $T$ is approached. If $\alpha=1/2$, by the pinching estimate \eqref{s4:pinc} we can check that the evolution equation of rescaled speed function $\Theta(t,T)\sqrt{K-1}$ is uniformly parabolic, then the Harnack inequality gives the lower bound on $\Theta(t,T)\sqrt{K-1}$, where as before $\Theta(t,T)$ denotes the spherical solution to \eqref{4.1} with the same maximum existence time $T$. The convergence of the rescaled solutions can be proved using a similar procedure as in \cite{Andrews-Chen2017}. For $1/2<\alpha\leq 1$, the evolution equaiton of the rescaled speed function is a porous medium type equation as in Lemma \ref{s7:lem3}. Then the argument given in the proof of case (i) can be adapted to complete the proof. The proof for case (iii) is similar.
\end{proof}

\begin{proof}[Proof of Theorem \ref{main-theo-II}]
The case (i) with $\alpha=1$ and case (ii) with $\alpha=1/2$ are included in the result by Gerhardt \cite{Ger15}. The case (ii) with $\alpha=1$ was proved by McCoy \cite{McCoy2017}. For the remaining cases, the evolution equation of the rescaled speed function can be written as a porous medium type equation as in Lemma \ref{s7:lem3}. The argument there can be adapted to complete the proof.
\end{proof}


\end{document}